\newtheorem{theorem}{Theorem}[section]
\newtheorem{lemma}[theorem]{Lemma}
\newtheorem{proposition}[theorem]{Proposition}
\newtheorem{definition}[theorem]{Definition}
\newtheorem{corollary}[theorem]{Corollary}
\newcommand{\vb}{\vspace{3mm}}
\newcommand{\DD}{{\rm d}}
\newcommand{\eee}{\mathscr{E}}
\newcommand{\PP}{{\mathbb P}}
\newcommand{\rr}{\mathbb{R}}
\newcommand{\xx}{\mathbb{X}}
\newcommand{\one}{\mathbf{1}}
\newcommand{\pp}{\mathbb{P}}
\newcommand{\dd}{\mathrm{d}}
\newcommand{\ee}{\mathbb{E}}
\newcommand{\diag}{\mathrm{diag}}
\newenvironment{proof}[1][\proofname]{\par \normalfont \trivlist
 \item[\hskip\labelsep\itshape #1]\ignorespaces
}{
 \hspace*{\fill}$\Box$ \endtrivlist
}
\newcommand{\proofname}{{\bf Proof}}
\begin{document}
\title{{Large deviations for Markov-modulated diffusion processes with rapid switching}}

\author{Gang Huang, Michel Mandjes, Peter Spreij}
\maketitle
\begin{abstract} \noindent
In this paper, we study small noise asymptotics of Markov-modulated diffusion processes
in the regime that the modulating Markov chain is rapidly switching. 
We prove the joint sample-path large deviations principle
for the Markov-modulated diffusion process and the occupation measure of the Markov chain
(which evidently also yields the  large deviations principle for each of them separately by applying the contraction principle). 
The structure of the proof is such that we first prove exponential tightness, and then establish a local large deviations principle (where the latter part is split into proving the corresponding upper bound and lower bound). 

\vb

\noindent {\it Keywords.} diffusion processes  $\star$ Markov modulation $\star$  large deviations
$\star$ stochastic exponentials $\star$ occupation measure
\vb

\noindent {\it Address.} Korteweg-de Vries Institute for Mathematics, University of Amsterdam, Science Park 904, 1098 XH Amsterdam, the Netherlands.

\vb

\noindent {\it Email}. {\footnotesize $\{${\tt g.huang|m.r.h.mandjes|spreij}$\}${\tt @uva.nl}}

\vb

\end{abstract}

\section{Introduction}
The setting studied in this paper is the following. We consider a complete probability space $(\Omega, \mathscr{F}, \mathbb{P} )$ with a filtration 
$\{\mathscr{F}_{t}\}_{t\in \rr_{+}}$, where $\rr_{+}:=[0, +\infty)$. $\mathscr{F}_{0}$ contains all the $\pp$-null sets of $\mathscr{F}$,
and $\{\mathscr{F}_{t}\}_{t\in \rr_{+}}$ is right continuous. Let $X_{t}$ be a finite-state time-homogeneous Markov chain with transition intensity matrix $Q$ 
and state space $\mathbb{S}:=\{1, \cdots, d\}$ for some $d\in \mathbb{N}$.
The Markov-modulated diffusion process is defined as the unique solution to
\begin{equation*}
M_{t}=M_{0}+\int_{0}^{t}b(X_{s}, M_{s})\DD s+\int_{0}^{t}\sigma(X_{s}, M_{s})\DD B_{s},
\end{equation*}
where $B_{t}$ is a standard Brownian motion. 
We assume that there exist $i, x$ such that $\sigma(i, x) \neq 0$ throughout this paper. 
The concept of Markov modulation is also known as `regime switching'; the Markov chain $X_t$ is often referred to as the `background process', or the `modulating Markov chain'.

The objective of this paper is to study the above stochastic differential equation under a particular parameter scaling. 
For a strictly positive (but typically small) $\epsilon$, we scale $Q$ to $Q/\epsilon=:Q^{\epsilon}$, and denote by $X_t^{\epsilon}$ the Markov chain with this transition intensity matrix $Q^{\epsilon}$.
If the expected number of jumps per unit time is $y$ for $X_{t}$, then the time-scaling entails that it is $y/ \epsilon$ for $X^{\epsilon}_{t}$. One could therefore say that the Markov chain has been sped up by a factor $\epsilon^{-1}$, and,
as a consequence, $X^{\epsilon}_{t}$ switches rapidly among its states when $\epsilon$ is small. 
A classical topic in large deviations theory, initiated by Freidlin and Wentzell \cite{Freidlin}, concerns small-noise large deviations. In this paper, we investigate how rapid-switching behavior of $X^{\epsilon}_{t}$ affects the small-noise asymptotics of 
$X^{\epsilon}_t$-modulated diffusion processes on the interval $[0, T]$ (for any fixed strictly positive $T$). 

Let us make the scaling regime considered more concrete now. Importantly, it concerns a scaling of the function $\sigma(\cdotp, \cdotp)$  to $\sqrt{\epsilon}\sigma(\cdotp, \cdotp)$ in the Markov-modulated diffusion, but at the same time  we speed up the Markovian background process in the way we described above. 
The resulting process $M^{\epsilon}_{t}$ is defined as the unique strong solution to 
\begin{equation} \label{def}
M^{\epsilon}_{t}=M^{\epsilon}_{0}+\int_{0}^{t}b(X_{s}^{\epsilon}, M_{s}^{\epsilon})\DD s+
\sqrt{\epsilon}\int_{0}^{t}\sigma(X_{s}^{\epsilon}, M_{s}^{\epsilon})\DD B_{s},
 \end{equation}
where we recall that $X^{\epsilon}_t$ has transition intensity matrix $Q^{\epsilon}.$
Focusing on the regime that $\epsilon\to0$, we call in the sequel $M_t^{\epsilon}$ the Markov-modulated diffusion process with rapid switching.
For simplicity, we will assume throughout this paper that $M^{\epsilon}_{0}\equiv0$, whereas
$X_{0}^{\epsilon}$ starts at an arbitrary $x\in \mathbb{S}$, for all $\epsilon$. 
When we write e.g.\ $\ee[M^{\epsilon}_{t}]$, this is to be understood as the expectation of $M^{\epsilon}_{t}$ with the above initial conditions.

Since $M_t^{\epsilon}$ evolves in the random environment of $X_t^{\epsilon}$, we need to design a coupling to separate the effects of the vanishing of the diffusion term and the fast varying of the Markov chain, but at the same time to keep track of both of them. Since the scaling $Q$ to $Q/\epsilon$ is equivalent to speeding up time by a factor $\epsilon^{-1}$, one could informally say that $X_t^{\epsilon}$ relates to a faster time scale than $M_t^{\epsilon}$, and therefore essentially exhibits  stationary behavior `around' this specific $t$. Then it is custom to consider the occupation measure of $X^{\epsilon}_{t}$,
which is defined on $\Omega\times [0, T] \times \mathbb{S}$ as
\begin{equation}
\nu^{\epsilon}(\omega; t, i)=\int_{0}^{t}\one_{\{X_{s}^{\epsilon}(\omega)=i\}}\DD s.
\end{equation} 
As its name suggests, $\nu^{\epsilon}(\cdot ; T, i)$ measures the time $X_t^{\epsilon}$ spends in state $i$ during the time interval $[0, T]$.
 Moreover, we can use the derivative of $\nu^{\epsilon}(t)$ to gauge the infinitesimal change of the occupation measure of $X_{t}^{\epsilon}$, at any $t\in[0,T]$. 
We thus construct a coupling $(M^{\epsilon}, \nu^{\epsilon})$, which is the main object studied in this paper.

A celebrated result in Donsker and Varadhan \cite{Donsker} concerns the large deviations principle (LDP) for $\nu^{1}(\omega; t, \cdot)/t$ as $t \rightarrow \infty$ (i.e., the LDP of the fraction of time spent in the individual states of the background process). The setting of the present paper,  however, involves the 
{\it sample-path} LDP for $\nu^{\epsilon}$ on $[0, T]$ as $\epsilon \rightarrow 0$.
More precisely, we define the image space $\mathbb{M}_T$ of $\nu^{\epsilon}$ restricted on $[0, T]$ as the space of functions $\nu$ on $[0, T]\times\mathbb{S}$ satisfying $\nu( t , i)=\int_{0}^{t} K_{\nu}(s, i)\DD s$,
where $\sum_{i=1}^{d} K_{\nu}(s, i) =1 $, $K_{\nu}(s, i) \geqslant 0$ for every $i \in \mathbb{S}, s\in [0, T]$, and $K_{\nu}(s, i)$ being Borel measurable with respect to $s$; $K_{\nu}$ is referred to as the {\it kernel} of $\nu$. The metric
on $\mathbb{M}_T$ is defined as
\[d_T(\mu, \nu)
=\sup_{0\leqslant t\leqslant T, i \in \mathbb{S}}\left|\int_{0}^{t} K_{\mu}(s, i) \DD s-\int_{0}^{t} K_{\nu}(s, i) \DD s\right|. \]
We can also view $\mathbb{M}_T$ as a subset of $\mathbb{C}_{[0, T]}(\rr^d)$ which is the space of $\rr^{d}$-valued continuous functions on $[0, T]$. In addition, the metric $d_T$ on $\mathbb{M}_T$ is equivalent to 
the uniform metric on $\mathbb{C}_{[0, T]}(\rr^d)$.

We also define $\mathbb{C}_{T}$ as the image space of $M^{\epsilon}$, which is the space of functions $f\in \mathbb{C}_{[0, T]}(\rr)$ and $f(0)=0$
equipped with the uniform metric
$\rho_{T}(f,g):=\sup_{0\leqslant t\leqslant T}|f(t)-g(t)|.$
The product metric $\rho_{T} \times d_T $ on  $\mathbb{C}_{T}\times \mathbb{M}_T$ is defined by
\[(\rho_{T} \times d_T ) ((\varphi, \nu), (\varphi', \nu')):=\rho_{T}(\varphi, \varphi')+d_T(\nu, \nu'), \:\:\:\:
\forall (\varphi, \nu), (\varphi', \nu')\in \mathbb{C}_{T}\times \mathbb{M}_T.\]
We denote by $\mathscr{B}(\mathbb{C}_{T}\times \mathbb{M}_T)$  the Borel $\sigma$-algebra generated by the topology
induced by $\rho_{T} \times d_T $.

The main result of this paper is the {\it joint} sample-path LDP for $(M^{\epsilon}, \nu^{\epsilon})$ on $\mathbb{C}_{T}\times \mathbb{M}_T$. The associated (joint) large deviations rate function is obtained in quite an explicit form. It is actually the  sum of two expressions 
that we introduce later in this paper, viz.\
(\ref{diff}), i.e., the rate function $I_T(\varphi,\nu)$ corresponding to  $M^{\epsilon}$, and (\ref{CTMC}), i.e., the rate function $\tilde I_T(\nu)$ corresponding to $\nu^{\epsilon}$.
Informed readers will recognize that these rate functions are variants of those for 
diffusion processes, as given in e.g.\ Freidlin and Wentzell \cite{Freidlin}, and for occupation measures of Markov 
processes, as given in e.g.\ Donsker and Varadhan \cite{Donsker} (where we remark again that the
result in \cite{Donsker} relates to $\nu^{1}(\omega; t, \cdot)/t$ for $t$ large, whereas our statement concerns  the sample paths of $\nu^{\epsilon}$).

One method of proving the LDP for a family of probability measures on a metric space, 
as was introduced in the seminal papers of Liptser and Pukhalskii \cite{Liptser} and Liptser \cite{Liptser1}, is to first prove exponential 
tightness, and then the local LDP (precise definitions of these notions will be given in the next section). Our work by and large follows this approach.  Importantly, the model
considered in Liptser \cite{Liptser1} is similar to ours, in that it also studies the stochastic differential equation
 (\ref{def}), but in the setup of Liptser \cite{Liptser1} the process $X^{\epsilon}_{t}$ is another diffusion process (rather than a finite-state Markov chain).  It means that we can roughly follow the structure of the proof presented in \cite{Liptser1} (we also rely on  the method of stochastic exponentials, for instance), but there are crucial differences at many places. For instance, as we point out below, there are several novelties that have the potential of being used in other settings, too. 

One of the methodological novelties is the following.
We explore a nice connection between regularity properties of the rate function $\tilde{I}_T(\nu)$ in the LDP for $(M^{\epsilon}, \nu^{\epsilon})$ and a dense subset of the image space $\mathbb{M}_T$ of $\nu^{\epsilon}$. On this dense subset, the optimizer of the integrand of $\tilde{I}_T(\nu)$ is infinitely differentiable. This eliminates many difficulties in the computation and leads us to first prove the local LDP on a dense subset of $\mathbb{C}_T \times \mathbb{M}_T$. We then extend the local LDP to $\mathbb{C}_T \times \mathbb{M}_T$ by continuity properties of the rate functions $I_T(\varphi,\nu)$ and $\tilde{I}_T(\nu)$.  

Let $\mathbb{U}$ denote the space of functions on $[0, T] \times \mathbb{S}$ 
being continuously differentiable on $[0, T]$ and $\inf_{s \in [0, T], i \in \mathbb{S}} u(s, i)>0$. In our analysis in Section 6, we identify the following stochastic
exponential which is directly related to the Markov chain $X^{\epsilon}_t$ and its 
rate function $\tilde{I}_T(\nu)$ (as given in (\ref{CTMC})):\[\frac{u(t, X^{\epsilon}_{t})}{u(0, X^{\epsilon}_{0})}\exp\left(-\int_{0}^{t}\frac{\frac{\partial}{\partial s} u(s, X^{\epsilon}_s)+(Q^{\epsilon}\,u)
(s, X^{\epsilon}_{s})}{u(s, X_{s}^{\epsilon})}\DD s\right), \:\:\: u \in \mathbb{U},\]
which plays a key role when proving the local LDP.
Here we follow the notational convention that
$(Q^\epsilon u)(s,i)=\sum_{j=1}^{d}Q^\epsilon_{ij}\,u(s,j),$ for $i \in \mathbb{S}$.

As mentioned above, the main result of our paper is the \emph{joint} sample-path LDP for $(M^{\epsilon}, \nu^{\epsilon})$.
The LDPs for each component $M^{\epsilon}$ and $\nu^{\epsilon}$ are then derived as corollaries from our main result in the standard way, i.e., by an application of the contraction principle.
The small noise LDP for the Markov-modulated diffusion processes (which is $M^{\epsilon}$ alone) is also studied in a newly published paper by He and Yin \cite{he3} in a setting of multi-dimensional processes and time-depending transition intensity matrices. In our corresponding result, which is Corollary \ref{modulation}, the rate function for $M^{\epsilon}$ is decomposed into two parts that allow an appealing interpretation: the first part  corresponds to the rare behavior of the background process $X^{\epsilon}$, where the second part corresponds to the rare behavior of $M^{\epsilon}$ (conditional on the rare behavior of $X^{\epsilon}$).  
The rate function in He and Yin \cite{he3} is less explicit, in that it is expressed in terms of an H-functional in which the aforementioned two parts cannot be distinguished.
The sample-path LDP for occupation measures of rapid switching Markov chain (which is $\nu^{\epsilon}$ alone) is obtained in Theorem 5.1 in He {\it et al.} \cite{he1}. The rate function, which is also expressed in terms of an H-functional, coincides with the rate function in our LDP for $\nu^{\epsilon}$ (Corollary \ref{occupation}) when the transition intensity matrix is time-homogeneous. However, focusing on obtaining the LDP for the Markov-modulated diffusion process together with the background process, our aim and approach in this paper are entirely different from theirs.

The large-deviations analysis for stochastic processes with Markov-modulation is a currently active research field. Besides the previously mentioned papers  of He {\it et al.} \cite{he1} and He and Yin \cite{he3}, we list a few more. Guillin \cite{guillin} proved the averaging principle (moderate deviations) 
of Equation (\ref{def}) where $X^{\epsilon}_{t}$ is an exponentially ergodic Markov process and $b, \sigma$ are bounded functions. He and Yin \cite{he2} studied  the moderate-deviations behavior of  $M^{\epsilon}_{t}$ in Equation (\ref{def}), where $\sigma \equiv 0$ and $X^{\epsilon}_{t}$ is a non-homogeneous 
Markov chain with two time-scales. Lasry and Lions \cite{Lions2} and Fourni{\'e} {\it et al.} \cite{Lions} considered large deviations for the hitting times of Markov-modulated diffusion processes with rapid switching.

Interestingly, the present paper relates to  our previous work \cite{Huang}. For ease ignoring the initial position, we there considered the Markov-modulated diffusion
$\check M_t^{\epsilon}$ described by
\[
\check M^{\epsilon}_{t}=\int_{0}^{t}b(X_{s}^{\epsilon}, \check M_{s}^{\epsilon})\DD s+
\int_{0}^{t}\sigma(X_{s}^{\epsilon}, \check M^{\epsilon}_{s})\DD B_{s}.
\]
In the regime $\epsilon\to 0$ the solutions of the stochastic differential equation
converge weakly to a (non-modulated) diffusion $\check M_t$ satisfying, with $\pi$ denoting the stationary distribution of $X_t^{\epsilon}$ (and hence also of $X_t$),
\[
\check M_{t}=\int_{0}^{t}\sum_{i=1}^{d}b(i, \check M_{s})\pi(i)\DD s+
\int_{0}^{t}\left(\sum_{i=1}^{d}\sigma^{2}(i, \check M_s)\pi(i)\right)^{1/2}\DD B_{s}.
\]
This result shows that,  when the background chain switches rapidly, it is hard to distinguish from observed data
a Markov-modulated diffusion process from an `ordinary'  diffusion. The work in the present paper, in contrast, indicates that no such property carries over to the large deviations. The impact of a fast switching background chain does appear in the small noise asymptotics, as shown in the LDPs in this paper.

We now describe the organization of our paper.
The structure of the paper is as follows. In Section~\ref{prem}, we introduce some preliminary results, definitions, 
and notation. In Section 3, we state the paper's main result and explain the steps of its proof. In Section 4,
exponential tightness of $(M^{\epsilon}, \nu^{\epsilon})$ is verified. We identify a dense subset of $\mathbb{C}_T \times \mathbb{M}_T$ in Section 5, and explore regularity properties of the rate function on it. The upper bound and lower bound of the local LDP
for $(M^{\epsilon}, \nu^{\epsilon})$ are proved in Sections 6 and  7, respectively. We present a number of technical lemmas  in the appendix.

\section{Preliminaries}\label{prem}
In this section we first provide the definitions of the LDP, exponential tightness and the local LDP,
and state a set  of related theorems that are relevant in the context of the paper. 
Let $\xx$ throughout denote a Polish space with Borel $\sigma$-algebra $\mathscr{B}(\xx)$ and a metric $\rho$.
\begin{definition} {\em (Varadhan \cite{Varadhan})}\:\label{LDP}
A family of 
probability measures $\pp^{\epsilon}$ on $(\xx, \mathscr{B}(\xx))$ is said to obey the LDP with a 
rate function $I(\cdot)$ if there exists a function $I(\cdot): \xx \rightarrow [0, \infty]$
satisfying:

\noindent
{\em (1)} There exists $x\in \xx$ such that $I(x)<\infty$; I is lower semicontinuous; 
for every $c<\infty$ the set $\{x:I(x)\leqslant c\}$ is a compact set in $\xx$.

\noindent {\em (2)} For every closed set $F \subset \xx$, $\limsup_{\epsilon \rightarrow 0} \epsilon \log\pp^{\epsilon}(F)\leqslant -\inf_{x \in F}I(x).$

\noindent
{\em (3)} For every open set $O \subset \xx$, $\liminf_{\epsilon \rightarrow 0} \epsilon \log\pp^{\epsilon}(O)\geqslant -\inf_{x \in O}I(x).$
\end{definition}

\begin{definition} {\em (Den Hollander \cite{Den hollander}, Puhalskii \cite{Puhalskii})}\:
A family of probability measures $\pp^{\epsilon}$ on $(\xx, \mathscr{B}(\xx))$
is said to be exponentially tight, if for every $L < \infty$,
there exists a compact set $K_{L} \subset \xx$ such that
\[\limsup_{\epsilon\rightarrow 0}\epsilon \log\pp^{\epsilon}(\xx \setminus K_L) \leqslant -L.\]
\end{definition}

\begin{definition} {\em (Puhalskii \cite{Puhalskii}, Liptser and Puhalskii \cite{Liptser1})}\:
A family of probability measures $\pp^{\epsilon}$ on $(\xx, \mathscr{B}(\xx))$
is said to obey the local LDP with a rate function $I( \cdot)$
if for every $x \in \xx$
\begin{equation} \label{upper}
\limsup_{\delta\rightarrow 0}\limsup_{\epsilon \rightarrow 0} \epsilon \log \PP^{\epsilon} (\{y \in \xx:\rho(x,y)\leqslant \delta\}) \leqslant -I(x), 
\end{equation}
\begin{equation} \label{lower}
\liminf_{\delta\rightarrow 0}\liminf_{\epsilon \rightarrow 0} \epsilon \log \PP^{\epsilon} (\{y \in \xx:\rho(x,y)\leqslant \delta\}) \geqslant -I(x).
\end{equation}
\end{definition}

Since $\xx$ is a Polish space,  Definition \ref{LDP}.(1) implies exponential tightness.
Also,  Definition \ref{LDP}.(2)--(3) guarantee that $\pp^{\epsilon}$ 
satisfies the local LDP. Actually, the converse is also valid and is the key to prove our main result.
 
\begin{theorem} {\em (Puhalskii \cite{Puhalskii}, Liptser and Puhalskii \cite{Liptser1})}\:\label{prove}
If a family of probability measures $\pp^{\epsilon}$ on $(\xx, \mathscr{B}(\xx))$
is exponentially tight and obeys the local LDP 
with a rate function $I$, then it
obeys the LDP with the rate function $I$.
\end{theorem}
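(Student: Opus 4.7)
The plan is to verify directly each of the three clauses of Definition~\ref{LDP} for $I$, using exponential tightness to reduce the global upper bound to a compact-set estimate and the local LDP bounds (\ref{upper})--(\ref{lower}) to supply the local ingredients. Throughout write $B(x,\delta):=\{y\in\xx:\rho(x,y)\leqslant\delta\}$.

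The first step will be the upper bound for compact sets. Fix a compact $K\subset \xx$ and $\eta>0$. For each $x\in K$, clause (\ref{upper}) supplies a $\delta_x>0$ with $\limsup_{\epsilon\to 0}\epsilon\log\PP^\epsilon(B(x,\delta_x))\leqslant -(I(x)\wedge \eta^{-1})+\eta$. A finite subcover $\{B(x_i,\delta_{x_i})\}_{i=1}^N$ of $K$ and a union bound (which costs only $\epsilon\log N\to 0$) will yield $\limsup_\epsilon \epsilon\log\PP^\epsilon(K)\leqslant -\inf_{x\in K}I(x)+2\eta$; then $\eta\to 0$. To extend to a general closed $F$, I split $\PP^\epsilon(F)\leqslant\PP^\epsilon(F\cap K_L)+\PP^\epsilon(\xx\setminus K_L)$ with $K_L$ from exponential tightness. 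The first term is controlled by the compact-set bound applied to $F\cap K_L$; the second is at most $e^{-(L+o(1))/\epsilon}$; sending $L\to\infty$ gives the closed-set upper bound.

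The lower bound is more immediate: for open $O$ and $x\in O$, choose $\delta$ small enough that $B(x,\delta)\subset O$; then $\PP^\epsilon(O)\geqslant\PP^\epsilon(B(x,\delta))$ and (\ref{lower}) yields $\liminf_\epsilon \epsilon\log\PP^\epsilon(O)\geqslant -I(x)$. Taking the supremum over $x\in O$ closes this case. The regularity of $I$ is handled with the same pieces: lower semicontinuity of $I$ follows from (\ref{upper}) by a standard diagonal argument (if $y_n\to x$, for any $\delta>0$ the balls $B(y_n,\delta/2)$ are eventually contained in $B(x,\delta)$); compactness of $\{I\leqslant c\}$ comes from observing that if $x\notin K_L$, then $B(x,\delta)\subset\xx\setminus K_L$ for small $\delta$ (since $K_L$ is closed), and (\ref{lower}) forces $I(x)\geqslant L$, so $\{I\leqslant c\}\subset K_{c+1}$ is a closed subset of a compact set, hence compact; and finiteness of $I$ somewhere follows from applying the closed-set upper bound to $F=\xx$ using $\PP^\epsilon(\xx)=1$.

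The main obstacle is the passage from the compact-set upper bound to the closed-set upper bound: the splitting argument is tight only because exponential tightness decays exponentially at the same $\epsilon$-rate as the LDP itself, which is precisely what allows the finite-cover estimate for $F\cap K_L$ and the $\PP^\epsilon(\xx\setminus K_L)$ estimate to combine without mismatch. The other steps are essentially bookkeeping on the Polish space $\xx$, relying on the fact that any compact set can be covered by finitely many balls whose radii are chosen locally via (\ref{upper}).
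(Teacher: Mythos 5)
The paper does not prove this result; it states it with a citation to Puhalskii \cite{Puhalskii} and Liptser--Puhalskii \cite{Liptser1}, so there is no in-paper argument to compare against. Your proposal reconstructs the standard proof: finite subcover plus union bound for the compact upper bound, exponential tightness to upgrade to closed sets, direct use of the local lower bound on any ball contained in an open set, and then recovery of the good-rate-function properties of $I$ from the same two local inequalities together with exponential tightness. This is correct in structure and in the essential estimates.

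Two bookkeeping points are worth tightening. First, you attribute lower semicontinuity of $I$ to (\ref{upper}) alone, but the argument you sketch uses both inequalities: for $y_n\to x$ and $\eta>0$, (\ref{upper}) at $x$ supplies a $\delta$ with $\limsup_\epsilon \epsilon\log\PP^\epsilon(B(x,\delta))\leqslant -I(x)+\eta$; the inclusion $B(y_n,\delta/2)\subset B(x,\delta)$ (valid for $n$ large) transfers this to the smaller ball; and it is (\ref{lower}) at $y_n$ — more precisely the monotonicity in $\delta$ that turns the $\liminf_{\delta\to 0}$ into an infimum over $\delta$, so that (\ref{lower}) holds for each fixed radius — that yields $-I(y_n)\leqslant\liminf_\epsilon\epsilon\log\PP^\epsilon(B(y_n,\delta/2))\leqslant -I(x)+\eta$, hence $\liminf_n I(y_n)\geqslant I(x)$. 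Second, the intermediate bound $-\inf_{x\in K}I(x)+2\eta$ in the compact step should carry the truncation, i.e.\ read $-\bigl(\inf_{x\in K}I(x)\wedge\eta^{-1}\bigr)+\eta$, so that it remains meaningful when $\inf_K I=\infty$; the conclusion after $\eta\to 0$ is unchanged. Neither issue affects the validity of the argument, which goes through as written once these are patched.
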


The following lemma, which corresponds to Lemma 1.4 in Borovkov and Mogulski{\u\i} \cite{borovkov}, shows that a local LDP on a dense subset of $\xx$ is enough for the validation of the local LDP on $\xx$, provided the rate function possesses a regularity property.
\begin{lemma} \label{mogu}
(i) If (\ref{upper}) is fulfilled for all $\tilde{x} \in \tilde{\mathbb{X}}$, where $\tilde{\mathbb{X}}$ is dense in $\mathbb{X}$ and function $I(x)$ is lower semi-continuous, then it holds for all $x \in \mathbb{X}$.\\
(ii) If for every $x \in \mathbb{X}$ with $I(x) < \infty$ there exists a sequence $\tilde{x}_n \in \tilde{\mathbb{X}}$ converging to $x$ and $I(\tilde{x}_n) \rightarrow I(x)$, then the fullfillment of 
(\ref{lower}) for $\tilde{x} \in \tilde{\mathbb{X}}$ implies the same for all $x \in \mathbb{X}$.
\end{lemma}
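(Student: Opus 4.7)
For part (i), the plan is to combine density of $\tilde{\mathbb{X}}$ with lower semicontinuity of $I$. Given $x \in \mathbb{X}$ and an arbitrary $\eta > 0$, lsc produces a ball $B(x, \beta)$ on which $I(y) \geq I(x) - \eta$ (if $I(x) < \infty$) or on which $I(y)$ exceeds any prescribed constant (if $I(x) = \infty$). By density, pick $\tilde{x} \in \tilde{\mathbb{X}} \cap B(x, \beta)$ close enough to $x$ that $\rho(\tilde{x}, x)$ is strictly dominated by the local-LDP radius furnished by (\ref{upper}) at $\tilde{x}$ with slack $\eta$. The triangle inequality gives $B(x, \delta) \subset B(\tilde{x}, \delta + \rho(\tilde{x}, x))$, so $\PP^{\epsilon}(B(x, \delta)) \leq \PP^{\epsilon}(B(\tilde{x}, \delta + \rho(\tilde{x}, x)))$ transfers (\ref{upper}) from $\tilde{x}$ to $x$, at the cost of an error that can be absorbed into $\eta$. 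Sending $\eta \to 0$ then yields (\ref{upper}) at $x$.

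For part (ii), assume $I(x) < \infty$ (the case $I(x) = \infty$ being vacuous), and fix $\eta > 0$. Take a recovery sequence $\tilde{x}_n \to x$ in $\tilde{\mathbb{X}}$ with $I(\tilde{x}_n) \to I(x)$, furnished by the hypothesis. For $n$ large, $I(\tilde{x}_n) \leq I(x) + \eta/2$, and the local lower bound (\ref{lower}) at $\tilde{x}_n$ provides a radius $\delta_1(n) > 0$ inside which $\liminf_{\epsilon} \epsilon \log \PP^{\epsilon}(B(\tilde{x}_n, \delta')) \geq -I(\tilde{x}_n) - \eta/2$. Using the inclusion $B(\tilde{x}_n, \delta') \subset B(x, \delta' + \rho(\tilde{x}_n, x))$ reroutes this into $\liminf_{\epsilon} \epsilon \log \PP^{\epsilon}(B(x, \delta)) \geq -I(x) - \eta$ for every $\delta$ in the interval $(\rho(\tilde{x}_n, x), \rho(\tilde{x}_n, x) + \delta_1(n))$. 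A suitable sub-sequencing of $\tilde{x}_n$ makes the union of these intervals cover an entire neighborhood $(0, \delta_0)$ of zero, from which (\ref{lower}) at $x$ follows, and then sending $\eta \to 0$ closes the argument.

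The main obstacle is exactly the sub-sequencing step in part (ii): the radii $\delta_1(n)$ depend on $\tilde{x}_n$ and can in principle shrink faster than $\rho(\tilde{x}_n, x)$, so one cannot simply fix $n$ and then let $\delta \to 0$. The cure is to refine $\tilde{x}_n$ — replacing it by a subsequence if necessary — so that the ``good'' intervals $(\rho(\tilde{x}_n, x), \rho(\tilde{x}_n, x) + \delta_1(n))$ overlap consecutively and collectively reach down to $0$. This is the book-keeping at the heart of the Borovkov and Mogulski{\u\i} argument, and is the one place where the density hypothesis and the approximation hypothesis $I(\tilde{x}_n) \to I(x)$ must be used in concert. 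Part (i) avoids this complication because the upper bound is monotone in $\delta$ and needs only a single approximant $\tilde{x}$, whereas part (ii) genuinely requires the full recovery sequence.
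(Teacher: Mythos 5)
The paper does not prove this lemma; it cites Lemma 1.4 of Borovkov and Mogulski{\u\i} \cite{borovkov} without reproduction, so your proposal must stand on its own. Reviewing it, both parts have problems, though of opposite kinds.

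In part (i) the step where you ``pick $\tilde{x}\in\tilde{\mathbb{X}}\cap B(x,\beta)$ close enough to $x$ that $\rho(\tilde{x},x)$ is strictly dominated by the local-LDP radius furnished by (\ref{upper}) at $\tilde{x}$'' is circular and, in general, impossible: that radius depends on $\tilde{x}$, and nothing prevents it from shrinking faster than $\rho(\tilde{x},x)$ as $\tilde{x}\to x$. Concretely, write $g(z,\delta):=\limsup_{\epsilon\to0}\epsilon\log\PP^{\epsilon}(\{y:\rho(z,y)\leq\delta\})$, which is non-decreasing in $\delta$, and recall that (\ref{upper}) at $\tilde{x}$ asserts $\lim_{\delta'\to0}g(\tilde{x},\delta')\leq-I(\tilde{x})$. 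The inclusion $B(x,\delta)\subset B(\tilde{x},\delta+\rho(\tilde{x},x))$ only yields $\lim_{\delta\to0}g(x,\delta)\leq\lim_{\delta'\downarrow\rho(\tilde{x},x)}g(\tilde{x},\delta')$, a right-limit of $g(\tilde{x},\cdot)$ at the strictly positive level $\rho(\tilde{x},x)$, which can sit strictly above $\lim_{\delta'\to0}g(\tilde{x},\delta')$. Your argument silently identifies these two, and monotonicity of $g(\tilde{x},\cdot)$ works against you here, not for you; so the remark that part (i) ``avoids this complication because the upper bound is monotone in $\delta$'' is exactly backwards. This is the gap your proof needs to address, and it is not cosmetic.

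In part (ii) the obstruction you describe does not exist, and the ``book-keeping at the heart of the Borovkov--Mogulski{\u\i} argument'' you invoke is a phantom. Since $\delta'\mapsto\liminf_{\epsilon\to0}\epsilon\log\PP^{\epsilon}(\{y:\rho(\tilde{x}_n,y)\leq\delta'\})$ is non-decreasing, its $\liminf$ as $\delta'\to0$ is its infimum over all $\delta'>0$, so (\ref{lower}) at $\tilde{x}_n$ delivers $\liminf_{\epsilon\to0}\epsilon\log\PP^{\epsilon}(\{y:\rho(\tilde{x}_n,y)\leq\delta'\})\geq-I(\tilde{x}_n)$ for \emph{every} $\delta'>0$, not merely for $\delta'$ below some finite $\delta_1(n)$. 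Your ``good'' interval at stage $n$ is thus $(\rho(\tilde{x}_n,x),\infty)$, and the sub-sequencing/overlap paragraph evaporates. The clean argument is a plain two-quantifier exchange: fix $\eta>0$ and $N$ with $I(\tilde{x}_n)\leq I(x)+\eta$ for $n\geq N$; for each $\delta>0$ choose $n\geq N$ with $\rho(\tilde{x}_n,x)<\delta$, use $\{y:\rho(\tilde{x}_n,y)\leq\delta-\rho(\tilde{x}_n,x)\}\subset\{y:\rho(x,y)\leq\delta\}$ to get $\liminf_{\epsilon\to0}\epsilon\log\PP^{\epsilon}(\{y:\rho(x,y)\leq\delta\})\geq-I(\tilde{x}_n)\geq-I(x)-\eta$, and send $\delta\to0$, then $\eta\to0$. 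Your sketch would land in the right place, but only by chance, because you have misdiagnosed where the hypotheses do real work.
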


Next we impose some assumptions on the stochastic differential equation (\ref{def}), as was defined in the introduction.
{It is noted that (A.1) (`Lipschitz continuity') implies (A.2) (`linear growth'); we chose to include (A.2) as well, however, for ease reference in later  sections.}

\begin{itemize}
\item[(A.1)]{\it Lipschitz continuity}: there is a positive constant $K$ such that 
\[|b(i, x)-b(i, y)|+ |\sigma(i, x)-\sigma(i, y)| \leqslant K|x-y|, \:\:\:\: \forall i\in \mathbb{S}, \:\:\:x, y\in \rr.\]
\item[(A.2)]{\it Linear growth}: there exists a positive constant $K$ (which might be different from the $K$ used in (A.1)) such that
\[|b(i,x)|+|\sigma(i,x)|\leqslant K(1+|x|), \:\:\:\: \forall i\in \mathbb{S}, \:\:\:x\in \rr.\]
\item[(A.3)]{\it Independence}: the Markov chain $X_{t}^{\epsilon}$ is independent of the Brownian motion $B_{t}$ for all $\epsilon$.
\item[(A.4)]{\it Irreducibility}: the off-diagonal entries of the transition intensity matrix $Q$ are strictly positive. Hence, the Markov chain $X_{t}^{\epsilon}$ is irreducible for all $\epsilon$ and has an invariant probability measure $\pi=(\pi(1), \cdots, \pi(d)).$
\end{itemize}

Finally, we introduce some extra notation and function spaces. For an arbitrary stochastic process or a function $Y_{t}$, we denote the running maximum process by
$Y^{*}_{t}:=\sup_{s\leqslant t}|Y_{s}|$. For a semimartingale $Y_{t}$ such that $Y_{0}=0$, its {\it stochastic exponential} is defined as a semimartingale $\eee(Y)_{t}$ 
which is the unique strong solution to 
\[\eee(Y)_t=1+\int_{0}^{t}\eee(Y)_{s-}\dd Y_s.\]
We denote $\mathbb{H}_{T}$ the Cameron-Martin space of functions 
$\varphi \in \mathbb{C}_{T}$ such that $\varphi(t)=\int_{0}^{t} \varphi'(s) \DD s$ and $\varphi'$ is square-integrable on $[0, T]$. We call $\varphi'$ the derivative of $\varphi$.

\section{Main results}
We first introduce the definitions of the rate functions involved in the main result. The rate function corresponding to $\nu^{\epsilon}$ is defined as
\begin{equation} \label{CTMC}
 \tilde{I}_{T}(\nu):=\int_0^T
 \sup_{u \in U}\left[-\sum_{i=1}^{d}\frac{(Qu)(i)}{u(i)}K_{\nu}(s, i)\right] \DD s, \:\:\:\: \nu \in \mathbb{M}_T,
\end{equation}
where we recall the notation
$(Qu)(i)=\sum_{j=1}^{d}Q_{ij}u(j),$ for $i \in \mathbb{S}$,
and 
$U$ denotes the set of $d$-dimensional component-wise strictly positive vectors.
We now define the rate function corresponding to $M^{\epsilon}$.
For any $(\varphi, \nu) \in \mathbb{C}_{T}\times \mathbb{M}_T$, we define
\begin{equation}\label{diff}
 I_{T}(\varphi, \nu) := \left\{
 {\displaystyle 
  \begin{array}{l l}{\displaystyle 
 \frac{1}{2}\int_{0}^{T} \frac{[\varphi'_{t}-\hat{b}(\nu, \varphi_{t})]^2}{\hat{\sigma}^2(\nu, \varphi_{t})} \DD t }& \quad 
 \text{if } \varphi \in \mathbb{H}_{T},\\
    \infty & \quad \text{otherwise.}
  \end{array} }\right.
\end{equation}
where
\[\hat{b}(\nu, x):= \sum_{i=1}^{d} b(i,x)K_{\nu}(t, i),\:\:\:\:\: 
\hat{\sigma}(\nu, x):=\left(\sum_{i=1}^{d}\sigma^{2}(i, x)K_{\nu}(t, i)\right)^{1/2}.\]
In the above formulae, we follow the conventions that $0/0=0$ and $n/0=\infty,$ for all $n>0$.  When we fix a time $T$,  $(M^{\epsilon}, \nu^{\epsilon})$ is understood as a joint process restricted on $[0, T]$.
Let $\pp \circ (M^{\epsilon}, \nu^{\epsilon})^{-1}$ denote 
$\pp((M^{\epsilon}, \nu^{\epsilon})\in \cdot)$, which
is a family of probability measures on $(\mathbb{C}_{T}\times \mathbb{M}_T, \mathscr{B}(\mathbb{C}_{T}\times \mathbb{M}_T))$.
Also, $\pp \circ (M^{\epsilon})^{-1}$ and $\pp \circ (\nu^{\epsilon})^{-1}$ are families of
probability measures on  $(\mathbb{C}_{T}, \mathscr{B}(\mathbb{C}_{T}))$ and  $(\mathbb{M}_T, \mathscr{B}(\mathbb{M}_T))$
respectively. The following theorem is our main result which states the joint sample-path LDP of ($M^{\epsilon}, \nu^{\epsilon})$ on $[0, T]$, as $\epsilon\to 0$..
\begin{theorem} \label{mainresult}
For every $T>0$, the family  $\pp \circ (M^{\epsilon}, \nu^{\epsilon})^{-1}$
obeys the LDP in $(\mathbb{C}_{T}\times \mathbb{M}_T, \rho_T \times d_T)$ with the rate function
\[L_{T}(\varphi, \nu)=I_{T}(\varphi, \nu)+\tilde{I}_{T}(\nu).\]
\end{theorem}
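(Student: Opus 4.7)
The plan is to apply Theorem \ref{prove}: it suffices to show (a) that the family $\mathbb{P}\circ(M^\epsilon,\nu^\epsilon)^{-1}$ is exponentially tight on $\mathbb{C}_T\times\mathbb{M}_T$, and (b) that it obeys the local LDP with rate function $L_T=I_T+\tilde I_T$. For exponential tightness I would argue componentwise: for $\nu^\epsilon$, the image space $\mathbb{M}_T$ is itself precompact in $d_T$ because each coordinate is $1$-Lipschitz in $t$ with values in $[0,T]$, so any closed ball works as $K_L$. For $M^\epsilon$, the linear growth assumption (A.2) combined with an exponential martingale inequality applied to the Doléans-Dade exponential $\mathcal{E}(\lambda\int_0^\cdot\sigma(X^\epsilon_s,M^\epsilon_s)\dd B_s)$ yields exponential bounds on both $\sup_{t\le T}|M^\epsilon_t|$ and on the modulus of continuity, which by an Arzelà-Ascoli–type compactness criterion gives exponential tightness.

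For the local LDP, the core idea is the method of stochastic exponentials of Liptser--Puhalskii. I would combine two exponential (local) martingales: the usual Girsanov-type exponential $\exp(\epsilon^{-1}[\int_0^T\lambda_s\sqrt\epsilon\,\sigma(X^\epsilon_s,M^\epsilon_s)\dd B_s-\tfrac12\int_0^T\lambda_s^2\sigma^2(X^\epsilon_s,M^\epsilon_s)\dd s])$ for $\lambda$ related to the derivative of $\varphi$, and the Markov-chain exponential displayed in the introduction,
\[\frac{u(t,X^\epsilon_t)}{u(0,X^\epsilon_0)}\exp\!\left(-\int_0^t\frac{\partial_s u(s,X^\epsilon_s)+(Q^\epsilon u)(s,X^\epsilon_s)}{u(s,X^\epsilon_s)}\,\dd s\right),\qquad u\in\mathbb{U}.\]
Raising the latter to the power $\epsilon$ (equivalently, choosing $u$ on the time scale $\epsilon^{-1}$) converts the prefactor $\epsilon\log$ of an exponential moment into the test-functional $\int_0^T\sum_i[-(Qu)/u](i)K_\nu(s,i)\,\dd s$ appearing in (\ref{CTMC}), while independence (A.3) decouples the two factors. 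The upper bound is then a Chebyshev argument, optimizing over $(\lambda,u)$ to produce the infimum defining $I_T+\tilde I_T$; the lower bound follows from an absolutely continuous change of measure built from these exponentials, under which $(M^\epsilon,\nu^\epsilon)$ concentrates near $(\varphi,\nu)$, combined with Jensen's inequality.

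Following the roadmap sketched in the introduction, I would carry out these calculations only on a dense subset $\tilde{\mathbb{X}}\subset\mathbb{C}_T\times\mathbb{M}_T$ on which the kernel $K_\nu$ is smooth and bounded away from $0$, and on which the pointwise-optimizing vector $u^\star(s)$ inside the integrand of $\tilde I_T(\nu)$ can be chosen continuously differentiable in $s$. Smoothness of $u^\star$ legitimizes the application of Itô's formula in the $u$-exponential and removes the regularity obstructions that would otherwise arise. Once the local LDP is established on $\tilde{\mathbb{X}}$, I would invoke Lemma \ref{mogu}: for the upper bound I need lower semicontinuity of $L_T$ (which comes from the suprema in (\ref{CTMC}) and from Fatou applied to (\ref{diff})), and for the lower bound I need, for every $(\varphi,\nu)$ with $L_T(\varphi,\nu)<\infty$, an approximating sequence $(\varphi_n,\nu_n)\in\tilde{\mathbb{X}}$ with $L_T(\varphi_n,\nu_n)\to L_T(\varphi,\nu)$, which can be constructed by mollification of $\varphi'$ and regularization of $K_\nu$ followed by projection onto the simplex.

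The main obstacle will be the lower bound on the dense subset. There one must simultaneously twist the law of the Brownian motion (to steer $M^\epsilon$ toward $\varphi$) and the law of the rapidly switching chain $X^\epsilon$ (to steer its occupation measure toward $\nu$) in such a way that (i) the two twists do not interact badly, (ii) the Radon--Nikodym density has exactly the exponential rate $\epsilon^{-1}L_T(\varphi,\nu)$, and (iii) under the twisted measure a law-of-large-numbers/concentration statement holds showing that a small neighborhood of $(\varphi,\nu)$ carries probability tending to $1$. The two-scale nature of the problem — $X^\epsilon$ fast, $M^\epsilon$ slow — is both what allows the averaged drift $\hat b$ and averaged volatility $\hat\sigma$ to appear in (\ref{diff}) and what makes the uniform control of the Markov-chain fluctuations under the twist the delicate technical point.
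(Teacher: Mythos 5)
Your roadmap matches the paper's strategy almost exactly: exponential tightness componentwise (compactness of $\mathbb{M}_T$ and a martingale-maximal bound for $M^{\epsilon}$), then a local LDP via the product of a Girsanov exponential with the $u$-exponential $\eee(\tilde N^{\epsilon})_t$, carried out on a dense subset $\mathbb{C}_T\times\mathbb{M}_T^{++}$ where the pointwise optimizer $u^*(s,\cdot)$ inside $\tilde I_T$ can be taken smooth, and finally extended to all of $\mathbb{C}_T\times\mathbb{M}_T$ via the regularity criterion of Lemma~\ref{mogu}. One small misstatement: you do not ``raise the $u$-exponential to the power $\epsilon$''; the $1/\epsilon$ rate already enters because the exponent contains $Q^{\epsilon}=Q/\epsilon$, so $-\int_0^T (Q^\epsilon u)(s,X^\epsilon_s)/u(s,X^\epsilon_s)\,\dd s$ is automatically of order $\epsilon^{-1}$.

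The genuine gap is the lower bound when $\inf_{i,x}\sigma^2(i,x)=0$. The paper only assumes $\sigma(i,x)\neq 0$ for \emph{some} $(i,x)$, so $\hat\sigma(\nu,\varphi_t)$ can vanish, $h_s=(\varphi'_s-\hat b)/\hat\sigma$ need not be square-integrable, Novikov's condition for the Girsanov exponential $\eee(\bar N^{\epsilon})_t$ fails, and the change of measure you sketch is not available directly. The proof proceeds in two stages: first prove the lower bound under the extra hypothesis $\inf_{i,x}\sigma^2(i,x)>0$ (this is where the twist and the concentration estimate for $(M^{\epsilon},\nu^{\epsilon})$ under $\pp_{u^*}$ live), and then lift the ellipticity assumption by perturbing the SDE to $M^{\epsilon,\gamma}$ with an extra independent noise $\sqrt{\epsilon}\,\gamma W_t$, showing $M^{\epsilon,\gamma}$ and $M^{\epsilon}$ are superexponentially close as $\gamma\to 0$, and passing to the limit via monotone convergence in the resulting lower bound. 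Your proposal also leaves the concentration statement under the twisted measure unspecified; the mechanism (after recognizing that $X^{\epsilon}$ becomes a time-inhomogeneous chain with generator $Q(u^*)(t)/\epsilon$ whose instantaneous invariant measure is exactly $K_\nu(t,\cdot)$, and invoking a uniform LLN for such two-time-scale chains) is what makes the twist steer $\nu^{\epsilon}$ correctly, and it also drives the estimates on $\rho_T(M^\epsilon,\varphi)$; a bare appeal to Jensen's inequality will not produce it.
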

\begin{proof}
The proof relies on applying Theorem \ref{prove}. We first need to prove the exponential tightness of 
$\pp \circ (M^{\epsilon}, \nu^{\epsilon})^{-1}$ on 
$(\mathbb{C}_{T}\times \mathbb{M}_T, \mathscr{B}(\mathbb{C}_{T}\times \mathbb{M}_T))$, i.e.,
for every $L < \infty$, there exists a compact set $K_{L} \subset \mathbb{C}_{T}\times \mathbb{M}_T$ such that
\[\limsup_{\epsilon\rightarrow 0}\epsilon \log\pp\left((M^{\epsilon}, \nu^{\epsilon})\in
\mathbb{C}_{T}\times \mathbb{M}_T\setminus K_L\right) \leqslant -L.\]
It is obvious that $\pp \circ (M^{\epsilon}, \nu^{\epsilon})^{-1}$ 
is exponentially tight if so are $\pp \circ (M^{\epsilon})^{-1}$ and 
$\pp \circ (\nu^{\epsilon})^{-1}$. As we mentioned earlier, $\mathbb{M}_T$ is a subset of $\mathbb{C}_{[0, T]}(\rr^d)$. For any $\nu \in \mathbb{M}_T$, its derivative
$K_{\nu}(s,i)$ is bounded by 1. Then all $\nu \in \mathbb{M}_T$ have the same Lipschitz constant, and hence $\mathbb{M}_T$ is equicontinuous. It is easily seen that $\mathbb{M}_T$ is bounded and closed. Then the Arzel\`a-Ascoli theorem implies that $\mathbb{M}_T$ is compact. The
exponential tightness of $\pp \circ (\nu^{\epsilon})^{-1}$ is satisfied since we can take $K_L=\mathbb{M}_T$.  
Exponential tightness of $\pp \circ (M^{\epsilon})^{-1}$ is 
verified in Proposition \ref{expotight} below.

Secondly, we proceed to prove that $\pp \circ (M^{\epsilon}, \nu^{\epsilon})^{-1}$
obeys the local LDP with the rate function $L_{T}(\varphi, \nu)$. That is, 
for every $(\varphi, \nu) \in \mathbb{C}_{T}\times \mathbb{M}_T$, we need to obtain the upper bound
\[\limsup_{\delta\rightarrow 0}\limsup_{\epsilon \rightarrow 0} \epsilon \log \PP
(\rho_T(M^{\epsilon}, \varphi)+d_T(\nu^{\epsilon}, \nu)\leqslant \delta)\leqslant -L_{T}(\varphi, \nu),\]
and the lower bound
\[\liminf_{\delta\rightarrow 0}\liminf_{\epsilon \rightarrow 0} \epsilon \log \PP
(\rho_T(M^{\epsilon}, \varphi)+d_T(\nu^{\epsilon}, \nu)\leqslant \delta)\geqslant -L_T(\varphi, \nu).\]
The core of the proof is proving the local LDP on a dense subset of $\mathbb{C}_{T}\times \mathbb{M}_T$. 
The upper bound is validated in Proposition \ref{upp}.
The lower bound is first proved in Proposition \ref{non} given the condition $\inf_{i, x}\sigma^2(i , x) > 0$. 
Then the condition is lifted in Proposition \ref{low} by a perturbation argument.
\end{proof}

The LDP for $\pp \circ (M^{\epsilon})^{-1}$ only (rather than for $\pp \circ (M^{\epsilon}, \nu^{\epsilon})^{-1}$)
is then derived from Theorem\ \ref{mainresult} by the contraction
principle in Dembo and Zeitouni \cite{Dembo}. We follow the convention that $\inf(\emptyset)=\infty$.
\begin{corollary} \label{modulation}
The family $\pp \circ (M^{\epsilon})^{-1}$
obeys the LDP with the rate function $\inf_{\nu \in \mathbb{M}_T} L_{T}(\varphi, \nu)$.
\end{corollary}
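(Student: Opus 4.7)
The plan is to deduce Corollary \ref{modulation} from Theorem \ref{mainresult} via the contraction principle, essentially by projecting onto the first coordinate. Let $\pi_1:\mathbb{C}_T\times\mathbb{M}_T\to\mathbb{C}_T$ denote the projection $\pi_1(\varphi,\nu)=\varphi$. With respect to the product metric $\rho_T\times d_T$ on the domain and the uniform metric $\rho_T$ on the codomain, one has $\rho_T(\pi_1(\varphi,\nu),\pi_1(\varphi',\nu'))=\rho_T(\varphi,\varphi')\leqslant (\rho_T\times d_T)((\varphi,\nu),(\varphi',\nu'))$, so $\pi_1$ is (Lipschitz) continuous. Moreover, $M^{\epsilon}=\pi_1(M^{\epsilon},\nu^{\epsilon})$, so that $\pp\circ(M^{\epsilon})^{-1}=(\pp\circ(M^{\epsilon},\nu^{\epsilon})^{-1})\circ \pi_1^{-1}$.

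I would then invoke the contraction principle (Theorem 4.2.1 of Dembo and Zeitouni \cite{Dembo}): if a family of probability measures satisfies the LDP on a Polish space $\mathbb{X}$ with good rate function $L$, and $f:\mathbb{X}\to\mathbb{Y}$ is continuous into another Polish space $\mathbb{Y}$, then the pushforward family satisfies the LDP on $\mathbb{Y}$ with good rate function $J(y)=\inf\{L(x):f(x)=y\}$ (with the usual convention $\inf\emptyset=\infty$). Applying this with $\mathbb{X}=\mathbb{C}_T\times\mathbb{M}_T$, $\mathbb{Y}=\mathbb{C}_T$, $L=L_T$ and $f=\pi_1$ yields the desired LDP for $\pp\circ(M^{\epsilon})^{-1}$ with rate function
\[
J(\varphi)=\inf\{L_T(\varphi',\nu):\pi_1(\varphi',\nu)=\varphi\}=\inf_{\nu\in\mathbb{M}_T}L_T(\varphi,\nu),
\]
which is exactly the claim.

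The only point that warrants a short verification is that $L_T$ is a good rate function (i.e., has compact sub-level sets), so that the contraction principle applies and the resulting infimum $\inf_{\nu\in\mathbb{M}_T}L_T(\varphi,\nu)$ is itself a good rate function on $\mathbb{C}_T$. This is immediate from Definition \ref{LDP}.(1), which is part of the LDP established in Theorem \ref{mainresult}. Since there is no genuine obstacle here, I do not foresee any hard step; the only thing to be mindful of is the correct bookkeeping of the convention $\inf\emptyset=\infty$, which ensures that $J(\varphi)=\infty$ whenever $\varphi\notin\mathbb{H}_T$ (as in that case $I_T(\varphi,\nu)=\infty$ for every $\nu\in\mathbb{M}_T$).
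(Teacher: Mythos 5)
Your proposal is correct and matches the paper's own argument: the paper likewise derives Corollary \ref{modulation} from Theorem \ref{mainresult} by applying the contraction principle of Dembo and Zeitouni \cite{Dembo} to the continuous projection onto the first coordinate, together with the convention $\inf(\emptyset)=\infty$. You have simply spelled out the details (continuity of $\pi_1$, goodness of $L_T$) that the paper leaves implicit.
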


At an intuitive level, $\tilde{I}_{T}(\nu)$ can be interpreted as the `cost' of forcing $\nu^{\epsilon}$
to behave like $\nu$ on $[0, T]$. The other term $I_{T}(\varphi, \nu)$, can be seen as the `cost' of 
the sample paths of $M^{\epsilon}$ being close to $\varphi$ \emph{conditional on} $\nu^{\epsilon}$
{\it behaving like $\nu$ on $[0, T]$}. Then $\inf_{\nu \in \mathbb{M}} L_{T}(\varphi, \nu)$ indicates
the {\it minimal} \,{`cost'} of  the sample paths of $M^{\epsilon}$ being close to $\varphi$  on $[0, T]$.

Suppose $F$ is a closed or an open subset of $\mathbb{C}_{T}$. We can also interpret Corollary \ref{modulation} 
as the concentration of the probability $\pp \circ (M^{\epsilon})^{-1}(F)$, which is the set of sample paths of $M^{\epsilon}$,
on the `most likely path'
$\arg\inf_{\varphi \in F}(\inf_{\nu \in \mathbb{M}_T}  [I_{T}(\varphi, \nu)+\tilde{I}_{T}(\nu)]).$
So there are two sources contributing to the large deviations behavior of $M^{\epsilon}$:
$I_{T}(\varphi, \nu)$ represents the contribution resulting from the small noise, and $\tilde{I}_{T}(\nu)$ represents
the one from the rapid switching of the modulating Markov chain.

Again by the contraction principle,   $\pp \circ (\nu^{\epsilon})^{-1}$
obeys the LDP in $(\mathbb{M}_T, d_T)$ with the rate function $\inf_{\varphi \in \mathbb{C}_T} I_T(\varphi, \nu)+\tilde{I}_T(\nu)$.
Since there exists a $\varphi \in \mathbb{H}_T$ such that $\varphi'_{t}=\hat{b}(\nu, \varphi_{t})$ for all $t \in [0, T]$ and all $\nu \in \mathbb{M}_T$, it immediately follows that
 $\inf_{\varphi \in \mathbb{C}_T} I_T(\varphi, \nu)=0$. Hence, we have the following corollary. 
\begin{corollary} \label{occupation}
The family  $\pp \circ (\nu^{\epsilon})^{-1}$
obeys the LDP in $(\mathbb{M}_T, d_T)$ with the rate function
$\tilde{I}_{T}(\nu)$.
\end{corollary}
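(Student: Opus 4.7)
The strategy is the standard one sketched already in the paragraph preceding the statement: apply the contraction principle to the joint LDP of Theorem~\ref{mainresult} via the continuous projection
$\pi_2\colon \mathbb{C}_T\times\mathbb{M}_T\to\mathbb{M}_T$, $(\varphi,\nu)\mapsto \nu$. Since $\pi_2$ is $1$-Lipschitz with respect to $\rho_T\times d_T$ and $d_T$, the contraction principle (Dembo--Zeitouni \cite{Dembo}) gives that $\pp\circ(\nu^{\epsilon})^{-1}$ obeys the LDP in $(\mathbb{M}_T,d_T)$ with rate function
\[
J_T(\nu)\;=\;\inf_{\varphi\in\mathbb{C}_T}L_T(\varphi,\nu)\;=\;\tilde{I}_T(\nu)\;+\;\inf_{\varphi\in\mathbb{C}_T}I_T(\varphi,\nu),
\]
where the second equality uses that $\tilde{I}_T(\nu)$ does not depend on $\varphi$.

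The remaining task is therefore to show $\inf_{\varphi\in\mathbb{C}_T}I_T(\varphi,\nu)=0$ for every $\nu\in\mathbb{M}_T$. Since $I_T\geqslant 0$, it suffices to exhibit a distinguished $\varphi^{\nu}\in\mathbb{H}_T$ at which the integrand in \eqref{diff} vanishes a.e., i.e., $\varphi^{\nu}(0)=0$ and $(\varphi^{\nu})'_t=\hat b(\nu,\varphi^{\nu}_t)$ for a.e.\ $t\in[0,T]$; with the convention $0/0=0$ this immediately yields $I_T(\varphi^{\nu},\nu)=0$.

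Existence of such $\varphi^{\nu}$ is obtained from Carathéodory's existence and uniqueness theorem applied to the ODE $\varphi'_t=\hat b(\nu,\varphi_t)$. The vector field $(t,x)\mapsto \hat b(\nu,x)=\sum_{i=1}^{d}b(i,x)K_\nu(t,i)$ is Borel measurable in $t$ (since each $K_\nu(\cdot,i)$ is, by the definition of $\mathbb{M}_T$) and Lipschitz in $x$ uniformly in $t$: indeed, assumption (A.1) combined with $K_\nu(t,i)\in[0,1]$ and $\sum_i K_\nu(t,i)=1$ gives $|\hat b(\nu,x)-\hat b(\nu,y)|\leqslant K|x-y|$. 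Hence a unique absolutely continuous solution $\varphi^{\nu}$ on $[0,T]$ exists. The linear-growth bound (A.2) together with Grönwall's inequality makes $\varphi^{\nu}$ uniformly bounded, and consequently $(\varphi^{\nu})'_t=\hat b(\nu,\varphi^{\nu}_t)$ is essentially bounded on $[0,T]$, so $\varphi^{\nu}\in\mathbb{H}_T$.

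The step that requires the most care is the last one: verifying that $\varphi^{\nu}$ actually lies in the Cameron--Martin space $\mathbb{H}_T$ (rather than merely being absolutely continuous) and that the convention $0/0=0$ legitimately handles points where $\hat\sigma^2(\nu,\varphi^{\nu}_t)$ might vanish. Both points are nevertheless immediate from (A.2), Grönwall, and the definition of $I_T$ in \eqref{diff}, so no serious obstacle arises; the argument above completes the proof modulo invoking Theorem~\ref{mainresult} and the contraction principle.
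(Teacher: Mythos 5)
Your proposal is correct and follows essentially the same route as the paper: apply the contraction principle to the joint LDP of Theorem~\ref{mainresult} via the projection onto the second coordinate, and then observe that $\inf_{\varphi\in\mathbb{C}_T}I_T(\varphi,\nu)=0$ because the ODE $\varphi'_t=\hat b(\nu,\varphi_t)$, $\varphi(0)=0$, has a solution in $\mathbb{H}_T$. The paper states the existence of such a $\varphi$ without comment, while you supply the justification (Carath\'eodory existence/uniqueness, the uniform Lipschitz bound from (A.1), and Gr\"onwall plus (A.2) to conclude $\varphi'\in L^\infty\subset L^2$), which is a legitimate fleshing-out of the same argument.
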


\section{Exponential tightness}
We show the exponential tightness of $\pp \circ (M^{\epsilon})^{-1}$ 
by Aldous-Pukhalskii-type sufficient conditions, as dealt with in e.g.\ Aldous \cite{Aldous}, Liptser and Pukhalskii \cite{Liptser}.  
The following criterion for exponential tightness in $\mathbb{C}_T$, as well as an auxiliary lemma, are adapted from Theorem 3.1 and Lemma 3.1 in Liptser and Pukhaskii \cite{Liptser} (which
consider
c\`{a}dl\`{a}g processes 
with jumps) to our setting of continuous processes. Let $\Gamma_{T}(\mathscr{F}_t)$ denote the family of stopping times adapted to $\mathscr{F}_t$ taking values in $[0, T]$.

\begin{theorem} \label{Aldous-pukhaskii}
Let $Y^{\epsilon}_{t}: (\Omega,\{\mathcal{F}_{t}\}_{t\leqslant T},\pp )\rightarrow \mathbb{C}_T$. If
\begin{itemize}
\item[(i)]
\[ \lim_{K'\rightarrow \infty} \limsup_{\epsilon \rightarrow 0} \epsilon \log \mathbb{P}\left(Y_T^{\epsilon*}\geqslant K'\right)=-\infty,\]
\item[(ii)] \[\lim_{\delta \rightarrow 0} \limsup_{\epsilon \rightarrow 0} \epsilon \log \sup_{\tau \in \Gamma_{T}(\mathcal{F}_t)}
 \mathbb{P}\left(\sup_{t\leqslant \delta}|Y_{\tau+t}^{\epsilon}-Y_{\tau}^{\epsilon}|\geqslant \eta\right)=-\infty, \:\:\: \forall \eta >0, \]\end{itemize}
 then  $\pp \circ (Y^{\epsilon})^{-1}$ is exponentially tight. 
\end{theorem}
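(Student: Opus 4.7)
The plan is to produce, for every $L<\infty$, a compact set $K_L\subset \mathbb{C}_T$ such that $\limsup_{\epsilon\to 0}\epsilon \log \pp(Y^\epsilon\notin K_L)\leqslant -L$. The Arzel\`a--Ascoli theorem characterises relatively compact subsets of $\mathbb{C}_T$ as those that are uniformly bounded and equicontinuous, so the natural candidate is cut out by a uniform sup-norm bound together with a uniform control on the modulus of continuity
\[ w_T(f,\delta):=\sup_{\substack{s,t\in [0,T]\\ |s-t|\leqslant \delta}}|f(s)-f(t)|. \]
Given $K'>0$ and sequences $\delta_n,\eta_n\downarrow 0$, the closed set
\[ K_L := \{f\in \mathbb{C}_T:f^{*}_T\leqslant K'\}\cap \bigcap_{n\geqslant 1}\{f\in \mathbb{C}_T:w_T(f,\delta_n)\leqslant \eta_n\} \]
is then compact, and a union bound yields
\[ \pp(Y^\epsilon\notin K_L)\leqslant \pp(Y^{\epsilon *}_T > K')+\sum_{n\geqslant 1}\pp(w_T(Y^\epsilon,\delta_n)>\eta_n). \]

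The first summand is addressed directly by hypothesis (i): choose $K'$ large enough that $\limsup_\epsilon \epsilon\log \pp(Y^{\epsilon*}_T>K')\leqslant -L-1$. For the remaining terms, the key device is the auxiliary lemma mentioned just before the theorem, which bridges stopped increments and the modulus of continuity. The argument behind it goes as follows: for fixed $\eta,\delta>0$, iteratively define $\tau_0=0$ and $\tau_{k+1}=\inf\{t>\tau_k:|Y^\epsilon_t-Y^\epsilon_{\tau_k}|>\eta/4\}\wedge T$, so that by continuity of $Y^\epsilon$ one has $|Y^\epsilon_{\tau_{k+1}}-Y^\epsilon_{\tau_k}|=\eta/4$ on $\{\tau_{k+1}<T\}$. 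If $w_T(Y^\epsilon,\delta)>\eta$, then there exist $s<t$ with $t-s\leqslant \delta$ across which the path oscillates by more than $\eta$, which forces at least four consecutive $\tau_k$'s to lie in $[s,t]$ and hence at least one increment $\tau_{k+1}-\tau_k$ to be $\leqslant \delta$. Since at most $\lceil T/\delta\rceil+1$ stopping times fit in $[0,T]$, this produces an estimate of the form
\[ \pp(w_T(Y^\epsilon,\delta)>\eta)\leqslant \Bigl(\tfrac{T}{\delta}+1\Bigr)\sup_{\tau\in\Gamma_T(\mathscr{F}_t)}\pp\Bigl(\sup_{t\leqslant \delta}|Y^\epsilon_{\tau+t}-Y^\epsilon_{\tau}|\geqslant \eta/4\Bigr). \]

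Combining this bound with hypothesis (ii), for each $n$ we choose $\delta_n$ small enough that the supremum on the right, evaluated at $(\delta_n,\eta_n/4)$, decays on the $\epsilon\log(\cdot)$ scale faster than $-L-n-1$. The polynomial prefactor $T/\delta_n+1$ is a constant once $\delta_n$ is fixed, so it contributes nothing when multiplied by $\epsilon$ and sent to zero. A geometric summation in $n$ then gives the overall bound $-L$ on $\limsup_\epsilon\epsilon\log\pp(Y^\epsilon\notin K_L)$, completing the argument. The principal obstacle is the auxiliary lemma itself: one needs a pigeonhole-type stopping-time argument that preserves the \emph{supremum} over stopping times appearing in hypothesis (ii) while yielding only a polynomial prefactor in $1/\delta$, so that the fast decay supplied by (ii) is not spoiled on the logarithmic scale; getting the stopping times adapted to $\{\mathscr{F}_t\}$ and the continuity of $Y^\epsilon$ into the correct bookkeeping is where the technical care lies.
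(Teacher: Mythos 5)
The paper does not actually prove this theorem; it cites it as an adaptation to continuous paths of Theorem 3.1 and Lemma 3.1 of Liptser and Pukhalskii. So the comparison is with your blind attempt only. Your framework is the right one and the standard one: build $K_L$ via Arzel\`a--Ascoli as the intersection of a sup-norm ball with modulus-of-continuity constraints, discharge the sup-norm part with hypothesis (i), reduce the oscillation part to hypothesis (ii) via a bridging lemma with only a polynomial prefactor in $1/\delta$, and then sum geometrically.

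There is, however, a genuine gap in your derivation of the bridging estimate. You define $\tau_0=0$ and $\tau_{k+1}=\inf\{t>\tau_k:|Y^\epsilon_t-Y^\epsilon_{\tau_k}|>\eta/4\}\wedge T$, and then assert that ``at most $\lceil T/\delta\rceil+1$ stopping times fit in $[0,T]$.'' That count is false for this family: the $\tau_k$ are defined by an $\eta/4$\emph{-oscillation} criterion, not a $\delta$\emph{-time} criterion, so successive increments $\tau_{k+1}-\tau_k$ can be arbitrarily small and there is no a priori bound on how many of the $\tau_k$ land in $[0,T]$. Consequently the union bound you are implicitly invoking runs over an uncontrolled index set and does not deliver the prefactor $T/\delta+1$. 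The simplest correct route is the deterministic grid: if $w_T(Y^\epsilon,\delta)>\eta$, pick $s<t\leqslant s+\delta$ with $|Y^\epsilon_s-Y^\epsilon_t|>\eta$, take $k$ with $k\delta\leqslant s<(k+1)\delta$, and note that $s,t\in[k\delta,k\delta+2\delta]$ together with the triangle inequality forces $\sup_{u\leqslant 2\delta}|Y^\epsilon_{k\delta+u}-Y^\epsilon_{k\delta}|>\eta/2$. Since each $k\delta$ is a constant (hence admissible) stopping time, a union bound over the $\lfloor T/\delta\rfloor+1$ grid points yields
\[
\pp\bigl(w_T(Y^\epsilon,\delta)>\eta\bigr)\leqslant\Bigl(\tfrac{T}{\delta}+1\Bigr)\sup_{\tau\in\Gamma_T(\mathscr{F}_t)}\pp\Bigl(\sup_{u\leqslant 2\delta}\bigl|Y^\epsilon_{\tau+u}-Y^\epsilon_{\tau}\bigr|>\tfrac{\eta}{2}\Bigr),
\]
which has the same form as your claimed estimate (the $2\delta$ and $\eta/2$ are immaterial for the $\epsilon\log$-asymptotics). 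Alternatively, your recursive construction can be salvaged, but only by a two-case pigeonhole argument: either more than $T/\delta$ of the $\tau_k$ land in $[0,T]$, in which case by pigeonhole some increment among $k\leqslant T/\delta$ is already $\leqslant\delta$; or at most $T/\delta$ do, in which case the index produced by your oscillation argument is automatically $\leqslant T/\delta$. Either way, the union must be explicitly restricted to $k\leqslant T/\delta$ before the cardinality bound can be asserted. With that repair, the rest of your argument (choice of $K'$, choice of $\delta_n$ and $\eta_n$ to get decay rate $-L-n$, and the geometric summation) goes through.
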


\begin{lemma} \label{expoineq}
Let $Y=(Y_{t})_{t\geqslant 0}$ be a continuous semimartingale with $Y_{0}=0$. Let $D$ denote the part corresponding to a
predictable process of locally bounded variation, and $V$ the part corresponding
to the quadratic variation of the local martingale. Assume that for $T>0$ there exists a convex function $H(\lambda), \lambda \in \rr$ with $H(0)=0$ and such that for all $\lambda \in \rr$
and $t\leqslant T$
\[\lambda {D}_{t}+\lambda^{2}V_{t}/2\leqslant tH(\lambda \xi), \text{ a.s.},\]
where $\xi$ is a nonnegative random variable defined
on the same probability space as $Y$. Then, for all $c>0$ and $\eta>0$,
\[\mathbb{P}(Y_{T}^{*}\geqslant \eta)\leqslant \mathbb{P}(\xi> c)+\exp\left \{-\sup_{\lambda \in R}[\lambda\eta-TH(\lambda c)]\right \}.\]
\end{lemma}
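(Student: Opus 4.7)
The plan is to reduce the estimate to Doob's maximal inequality applied to the Dol\'eans-Dade exponential of the martingale part of $Y$, after first splitting off the event $\{\xi>c\}$. Decompose $Y_{t}=D_{t}+N_{t}$ where $N$ is a continuous local martingale with $\langle N\rangle_{t}=V_{t}$. For each $\lambda\in\rr$ the stochastic exponential $\eee(\lambda N)_{t}=\exp(\lambda N_{t}-\tfrac{1}{2}\lambda^{2}V_{t})$ is a positive continuous local martingale, hence a non-negative supermartingale with $\eee(\lambda N)_{0}=1$, so Doob's inequality yields $\pp(\sup_{t\leqslant T}\eee(\lambda N)_{t}\geqslant a)\leqslant 1/a$ for every $a>0$. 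Substituting $N_{t}=Y_{t}-D_{t}$ and invoking the standing hypothesis gives the key pointwise lower bound
\[
\eee(\lambda N)_{t}=\exp\!\bigl(\lambda Y_{t}-(\lambda D_{t}+\tfrac{1}{2}\lambda^{2}V_{t})\bigr)\geqslant \exp\!\bigl(\lambda Y_{t}-tH(\lambda\xi)\bigr)\quad\text{a.s.}
\]

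Next I split on $\xi$: $\pp(Y_{T}\s\geqslant\eta)\leqslant \pp(\xi>c)+\pp(\{Y_{T}\s\geqslant\eta\}\cap\{\xi\leqslant c\})$. On the event $\{\xi\leqslant c\}$, convexity of $H$ combined with $H(0)=0$ produces $H(\lambda\xi)=H((\xi/c)\lambda c)\leqslant (\xi/c)H(\lambda c)$, and with $t\leqslant T$ this upgrades to $tH(\lambda\xi)\leqslant TH(\lambda c)$ on the range of $\lambda$ with $H(\lambda c)\geqslant 0$; hence $\eee(\lambda N)_{t}\geqslant \exp(\lambda Y_{t}-TH(\lambda c))$ there.

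For $\lambda\geqslant 0$ and the upper tail $\{\sup_{t}Y_{t}\geqslant\eta\}$, continuity of $Y$ furnishes some $t\s\leqslant T$ with $Y_{t\s}=\eta$, so $\sup_{t\leqslant T}\eee(\lambda N)_{t}\geqslant \exp(\lambda\eta-TH(\lambda c))$; Doob's maximal inequality then gives $\pp(\{\sup_{t}Y_{t}\geqslant\eta\}\cap\{\xi\leqslant c\})\leqslant \exp(-\lambda\eta+TH(\lambda c))$. The lower tail $\{\inf_{t}Y_{t}\leqslant -\eta\}$ is handled symmetrically by running the same argument on $-Y=-D-N$ with $-\lambda$ in place of $\lambda$, producing an analogous bound for $\lambda\leqslant 0$. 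Taking the infimum over $\lambda\in\rr$ converts it into the Legendre transform $\exp(-\sup_{\lambda\in\rr}[\lambda\eta-TH(\lambda c)])$ announced in the statement.

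The main technical subtlety lies in the convexity step: one only obtains $H(\lambda\xi)\leqslant (\xi/c)H(\lambda c)$ in general, which upgrades to $tH(\lambda\xi)\leqslant TH(\lambda c)$ precisely when $H(\lambda c)\geqslant 0$, so a brief sign discussion is needed to verify that the suprema in the Legendre transform are indeed realized in this regime (automatic in the typical case $H\geqslant 0$, which is the setting relevant to our application, as the inequality $\lambda D_t+\lambda^2 V_t/2\leqslant tH(\lambda\xi)$ forces $H(\lambda\xi)+H(-\lambda\xi)\geqslant 0$). A secondary point is that $\eee(\lambda N)$ is only a supermartingale rather than a true martingale, which is harmless since Doob's maximal inequality for non-negative supermartingales still yields the required $1/a$ bound.
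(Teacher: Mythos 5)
The paper cites this lemma from Liptser and Pukhalskii \cite{Liptser} rather than proving it, so there is no in-paper argument to compare against; your strategy (split off $\{\xi>c\}$, form the Dol\'eans exponential $\eee(\lambda N)$, lower-bound it pathwise via the standing hypothesis, and apply Doob's maximal inequality for non-negative supermartingales) is the standard route and every individual step you carry out is sound.

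The gap is in the final combination of the two tails. Since $Y_T^*=\sup_{s\leqslant T}|Y_s|$, you bound $\pp(\{\sup_t Y_t\geqslant\eta\}\cap\{\xi\leqslant c\})$ and $\pp(\{\inf_t Y_t\leqslant-\eta\}\cap\{\xi\leqslant c\})$ separately and add them; each is controlled by \emph{an} exponential, but their sum is not the single $\exp\bigl(-\sup_{\lambda\in\rr}[\lambda\eta-TH(\lambda c)]\bigr)$ of the statement. Concretely, for even non-negative $H$ (as in the application, $H(\lambda)=|\lambda|+\lambda^2\epsilon/2$) each tail yields $\exp\bigl(-\sup_{\lambda\geqslant 0}[\lambda\eta-TH(\lambda c)]\bigr)$, which equals the claimed exponential, so you obtain \emph{twice} it. In fact the two-sided statement is not literally true: with $Y=B$ a standard Brownian motion, $\xi\equiv 1$, $c=T=1$, $H(\lambda)=\lambda^2/2$, the asserted bound at $\eta=\tfrac12$ is $e^{-1/8}\approx 0.88$, whereas $\pp(\sup_{t\leqslant 1}|B_t|\geqslant\tfrac12)\approx 0.99$. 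What your upper-tail argument does correctly establish is the one-sided estimate $\pp(\sup_{t\leqslant T}Y_t\geqslant\eta)\leqslant\pp(\xi>c)+\exp\bigl(-\sup_{\lambda\in\rr}[\lambda\eta-TH(\lambda c)]\bigr)$ for $H\geqslant 0$, and that is precisely the form invoked in Proposition \ref{expotight}, where the lemma is applied to the two increments $M^\epsilon_{\tau+t}-M^\epsilon_\tau$ and $M^\epsilon_\tau-M^\epsilon_{\tau+t}$ separately; a factor of $2$ in the two-sided version would in any case be irrelevant for exponential tightness. On the sign issue you flag: $H(\lambda\xi)+H(-\lambda\xi)\geqslant 0$ already follows from convexity and $H(0)=0$ alone, and it does \emph{not} imply $H\geqslant 0$ (take $H(x)=x$), so it does not by itself justify the restriction; the clean observation is that $\lambda=0$ is always admissible and, in the application, $H$ is non-negative everywhere, so the restriction $H(\lambda c)\geqslant 0$ costs nothing.
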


We are now ready to prove the exponential tightness claim. The technique borrows elements from Liptser \cite{Liptser1}.

\begin{proposition} \label{expotight}
For every $T>0$, the family $\pp \circ (M^{\epsilon})^{-1}$ is exponentially tight on $(\mathbb{C}_T, \mathscr{B}(\mathbb{C}_T))$. 
\end{proposition}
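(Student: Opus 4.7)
The plan is to invoke the Aldous--Pukhalskii criterion of Theorem \ref{Aldous-pukhaskii}, so that exponential tightness of $\pp\circ(M^\epsilon)^{-1}$ on $\mathbb{C}_T$ is reduced to a bound on the running maximum $M_T^{\epsilon *}$ (condition (i)) and on the modulus of continuity along $\mathscr{F}_t$-stopping times (condition (ii)). Both estimates are to be extracted from Lemma \ref{expoineq}. The structural difficulty is that this lemma is genuinely useful only when the dominating random variable $\xi$ can be taken essentially constant, whereas under (A.2) the coefficients $b$ and $\sigma$ only satisfy linear growth. The most natural random choice $\xi=K(1+M_T^{\epsilon *})$, together with the convex $H(x)=|x|+\epsilon x^2/2$, produces a self-referential inequality whose iterated form yields an $\epsilon$-log-rate that stays bounded as $K'\to\infty$, and hence cannot deliver the outer limit in condition (i).

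I would circumvent this by a change of variables. Set $F(x):=\operatorname{arcsinh}(x)=\log(x+\sqrt{1+x^2})$, so that $|F'(x)|=(1+x^2)^{-1/2}$ and $|F''(x)|\le(1+x^2)^{-1}$, and put $Y^\epsilon_t:=F(M^\epsilon_t)$. Itô's formula gives
\[dY^\epsilon_t=\Bigl[F'(M^\epsilon_t)\,b(X^\epsilon_t,M^\epsilon_t)+\tfrac{\epsilon}{2}F''(M^\epsilon_t)\,\sigma^2(X^\epsilon_t,M^\epsilon_t)\Bigr]\dd t+\sqrt{\epsilon}\,F'(M^\epsilon_t)\,\sigma(X^\epsilon_t,M^\epsilon_t)\,\dd B_t,\]
and the linear growth (A.2), combined with the decay of $F'$ and $F''$, makes both the drift and the diffusion coefficient of $Y^\epsilon$ bounded by a deterministic constant $C$ uniformly in $\epsilon$, $X^\epsilon$, and $M^\epsilon$. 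Lemma \ref{expoineq} now applies to $Y^\epsilon$ with the \emph{constant} $\xi=C$, $c=\xi$, and $H(x)=|x|+\epsilon x^2/2$, yielding the Gaussian-type bound
\[\pp\bigl(Y_T^{\epsilon *}\ge\eta\bigr)\le \exp\bigl(-(\eta-TC)^2/(2T\epsilon C^2)\bigr),\qquad \eta>TC,\]
and an analogous estimate for the shifted increments $Y^\epsilon_{\tau+\cdot}-Y^\epsilon_\tau$ over $[0,\delta]$ (with $T$ replaced by $\delta$), since the coefficient bounds for $Y^\epsilon$ are uniform in time.

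Finally I transfer these estimates back to $M^\epsilon$. Because $F$ is odd and strictly increasing, $Y_T^{\epsilon *}=F(M_T^{\epsilon *})$, so the first estimate becomes $\epsilon\log\pp(M_T^{\epsilon *}\ge K')\le -(F(K')-TC)^2/(2TC^2)$, which tends to $-\infty$ as $K'\to\infty$; this is condition (i). For condition (ii) I split on $\{M_T^{\epsilon *}\le R\}$, on which $F^{-1}=\sinh$ is Lipschitz with constant $\sqrt{1+R^2}$ (since $\cosh(\operatorname{arcsinh}(R))=\sqrt{1+R^2}$), so $|M^\epsilon_{\tau+t}-M^\epsilon_\tau|\le\sqrt{1+R^2}\,|Y^\epsilon_{\tau+t}-Y^\epsilon_\tau|$; the complement $\{M_T^{\epsilon *}>R\}$ is dominated via condition (i). Sending $\delta\to0$ first and then $R\to\infty$ produces the required iterated limit. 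The main obstacle throughout is the linear growth of $b,\sigma$; once absorbed by the transformation $F$, the remainder is a routine bookkeeping exercise with the exponential martingale inequality.
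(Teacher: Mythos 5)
Your argument is correct and constitutes a genuinely different (and arguably cleaner) route to condition (i) of Theorem \ref{Aldous-pukhaskii}. The paper does not in fact attempt to run Lemma \ref{expoineq} for condition (i) at all: it first controls $M_T^{\epsilon*}$ via Gronwall by the stochastic-integral part $C_T^{\epsilon*}$, then applies Chebyshev at the $1/\epsilon$-th moment and uses an It\^o/Gronwall moment estimate to show $\limsup_{\epsilon\to 0}\epsilon\log\ee[(C_T^{\epsilon*})^{1/\epsilon}]<\infty$; Lemma \ref{expoineq} with the random $\xi=K(1+M_{T+1}^{\epsilon*})$ is used only for condition (ii), where the term $\pp(\xi>c)$ is then absorbed via the already-established condition (i). Your diagnosis that a direct application of Lemma \ref{expoineq} with that random $\xi$ cannot close the argument for condition (i) is nonetheless accurate, and the remedy you propose is sound.

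Your alternative replaces the moment bookkeeping with a smooth, odd, strictly increasing substitution $Y^\epsilon=\operatorname{arcsinh}(M^\epsilon)$. The key point — that $F'(x)=(1+x^2)^{-1/2}$ and $|F''(x)|\le(1+x^2)^{-1}$ exactly compensate the linear growth of $b$ and $\sigma$, producing a process with deterministically bounded drift and diffusion coefficients, uniformly in $\epsilon\le 1$ — is correct, so Lemma \ref{expoineq} applies to $Y^\epsilon$ with a constant $\xi=C$, $c=C$, $\pp(\xi>c)=0$, and $H(x)=|x|+\epsilon x^2/2$. Transferring (i) back is immediate since $Y_T^{\epsilon*}=F(M_T^{\epsilon*})$ by oddness and monotonicity, and $F(K')\to\infty$. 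For (ii), your split on $\{M_{T+1}^{\epsilon*}\le R\}$ (you wrote $M_T^{\epsilon*}$, but since $\tau+t$ can reach $T+\delta$ you should use $M_{T+1}^{\epsilon*}$ as the paper does; this is a trivial fix) and the local Lipschitz bound $|\sinh(y)-\sinh(y')|\le\sqrt{1+R^2}\,|y-y'|$ for $|y|,|y'|\le\operatorname{arcsinh}(R)$ work exactly as you say; the complement is handled by (i) after sending $\delta\to 0$ and then $R\to\infty$. What your route buys is uniformity: both conditions (i) and (ii) are now consequences of the \emph{same} exponential martingale inequality with a nonrandom $\xi$, at the cost of a mild nonlinear change of variable. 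The paper's route stays entirely in the untransformed coordinates, but pays for this with the $1/\epsilon$-th moment computation and the local-martingale maximal inequality.
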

\begin{proof}  
Firstly, we verify the condition (i) of Theorem \ref{Aldous-pukhaskii} for the process $M_{T}^{\epsilon*}$. For any $T>0$, evidently,
\[
M_{T}^{\epsilon*}\leqslant 
\int_{0}^{T}|b(X_{s}^{\epsilon},M_{s}^{\epsilon})|\DD s
+\sup_{t\leqslant T}\left|\sqrt{\epsilon}\int_{0}^{t}\sigma(X_{s}^{\epsilon}, M_{s}^{\epsilon})\DD B_{s}\right|, \:\:\: \text{a.s..}
\]
We denote $C_{t}^{\epsilon}:=\sqrt{\epsilon}\int_{0}^{t}\sigma(X_{s}^{\epsilon}, M_{s}^{\epsilon})\DD B_{s}$. By (A.2),
\[M_{T}^{\epsilon*}\leqslant K\int_{0}^{T} (1+M_{s}^{\epsilon*})\DD s
 +C_{T}^{\epsilon*}=KT+C_{T}^{\epsilon*}+K\int_{0}^{T} M_{s}^{\epsilon*}\DD s, \:\:\: \text{a.s..}\]
Since $KT+C_{T}^{\epsilon*}$ is nonnegative and non-decreasing in $T$, Gronwall's inequality implies
\begin{equation}\label{gronwall}
M_{T}^{\epsilon*}\leqslant e^{K T}\left[K T+C_{T}^{\epsilon*}\right], \:\:\: \text{a.s..}
\end{equation}
Now define $j_{K'}:=K'\exp(-K T)-K T$. Then (\ref{gronwall}) entails that for sufficiently large $K'$ such that $j_{K'}>0$, 
\[\mathbb{P}(M_{T}^{\epsilon*}\geqslant K') \leqslant \mathbb{P}(C_{T}^{\epsilon*}\geqslant j_{K'})\leqslant j_{K'}^{-1/ \epsilon}\,\ee\left[(C_{T}^{\epsilon*})^{1/ \epsilon}\right],\]
using Chebyshev's inequality. We thus conclude
\begin{equation}\label{log}
\epsilon \log \mathbb{P}(M_{T}^{\epsilon*}\geqslant K') \leqslant -\log  j_{K'}+\epsilon \log \ee\left[(C_{T}^{\epsilon*})^{1/ \epsilon}\right].
\end{equation}

We assume that $1/\epsilon >2$ in the rest of the proof (justified by the fact that we consider the limit $\epsilon\to0$). Since $C^{\epsilon}_t$ is a local martingale, the process
$|C^{\epsilon}_t|^{1/\epsilon}$ has a unique Doob-Meyer decomposition; let $\check C^{\epsilon}_t$
denote the unique predictable increasing process in this decomposition.
Applying a local martingale maximal inequality (see e.g.\ Liptser and Shiryaev \cite[Thm.\ 1.9.2]{liptser2}) to $C_{t}^{\epsilon}$, 
we have for the running maximum process that
\begin{equation} \label{local maximal}
\ee\left[(C_{T}^{\epsilon*})^{1/ \epsilon}\right]
\leqslant \left(\frac{1}{1-\epsilon}\right)^{1/\epsilon}\ee\left[\check C^{\epsilon}_T\right].
\end{equation}

In order to obtain an explicit expression for $\check C^{\epsilon}_t$, 
we apply It\^{o}'s formula to $|C_{t}^{\epsilon}|^{1/\epsilon}$. This means that, for any $t\in [0, T]$,
\[|C_{t}^{\epsilon}|^{1/\epsilon}=\frac{1}{\sqrt{\epsilon}}\int_{0}^{t}|C_{s}^{\epsilon}|^{1/\epsilon-1}\text{sign}(C_{s}^{\epsilon}) \sigma(X_{s}^{\epsilon}, M_{s}^{\epsilon})\DD B_{s}+
\frac{1-\epsilon}{2 \epsilon}\int_{0}^{t}|C_{s}^{\epsilon}|^{1/\epsilon-2} \sigma^{2}(X_{s}^{\epsilon}, M_{s}^{\epsilon})\DD s.
\]
We notice that the first part is a local martingale and the second part is a predictable increasing process. 
As a consequence,
\begin{equation}\label{predictable}
\check C^{\epsilon}_T=\frac{1-\epsilon}{2 \epsilon}\int_{0}^{T}|C_{s}^{\epsilon}|^{1/\epsilon-2} \sigma^{2}(X_{s}^{\epsilon}, M_{s}^{\epsilon})\DD s.
\end{equation}
Invoking (A.2) again, we have that $\sigma^{2}(X_{s}^{\epsilon},M_{s}^{\epsilon})\leqslant K^2(1+M_{s}^{\epsilon*})^2.$
Since (\ref{gronwall}) remains valid when replacing $T$ by $s$, for any $s\leqslant T$, we find
\begin{eqnarray*}
|C_{s}^{\epsilon}|^{1/\epsilon-2} \sigma^{2}(X_{s}^{\epsilon}, M_{s}^{\epsilon})
&\leqslant&  (C_{s}^{\epsilon*})^{1/\epsilon-2} K^2\left[1+e^{K s}(K s+C_{s}^{\epsilon*})\right]^2\\
&\leqslant&  (C_{s}^{\epsilon*})^{1/\epsilon-2}K^2\left[1+e^{K T}(K T+C_{s}^{\epsilon*})\right]^2\\
&\leqslant&  (C_{s}^{\epsilon*})^{1/\epsilon-2}K^2[2(1+e^{K T}K T)^2+2e^{2KT}(C_{s}^{\epsilon*})^2].
\end{eqnarray*} 
Let $ L_{T, K}=2K^2\max \left\{(1+e^{K T}K T)^2, e^{2KT}\right\}$. Then 
\[|C_{s}^{\epsilon}|^{1/\epsilon-2} \sigma^{2}(X_{s}^{\epsilon}, M_{s}^{\epsilon})
\leqslant  (C_{s}^{\epsilon*})^{1/\epsilon-2} L_{T, K}\left[1+(C_{s}^{\epsilon*})^2\right]
\leqslant  L'_{T, K}\left[1+(C_{s}^{\epsilon*})^{1/ \epsilon}\right],\]
where $L\equiv L'_{T, K}$ is a positive constant not depending on $K'$ (nor $\epsilon$). 
We plug (\ref{predictable}) and the above estimate into (\ref{local maximal}), so as to obtain
\begin{eqnarray*}
\ee[(C_{T}^{\epsilon*})^{1/ \epsilon}]
&\leqslant& \left(\frac{1}{1-\epsilon}\right)^{1/\epsilon}\ee\left[\frac{1-\epsilon}
{2 \epsilon}\int_{0}^{T}|C_{s}^{\epsilon}|^{1/\epsilon-2} \sigma^{2}(X_{s}^{\epsilon}, M_{s}^{\epsilon})\DD s\right]\\
&\leqslant& \left(\frac{1}{1-\epsilon}\right)^{1/\epsilon-1}\frac{L}{2\epsilon}\left(T+\int_{0}^{T}\ee\left[(C_{s}^{\epsilon*})^{1/ \epsilon}\right]\DD s\right)\\
&\leqslant& \left(\frac{1}{1-\epsilon}\right)^{1/\epsilon-1}\frac{LT}{2\epsilon} \,\exp{\left[\left(\frac{1}{1-\epsilon}\right)^{1/\epsilon-1}\frac{LT}{2\epsilon}\right]},
\end{eqnarray*}
the last inequality following from Gronwall's inequality. 
Now observe that $(1-\epsilon)^{1-1/\epsilon}$ is decreasing on $\epsilon \in[0,\frac{1}{2})$, with limiting value $e$ as $\epsilon\to 0$.
As a result, we have the following upper bound on the exponential decay rate of $\ee[(C_{T}^{\epsilon*})^{1/ \epsilon}]$:
\begin{eqnarray*}
\limsup_{\epsilon \rightarrow 0}\epsilon \log \ee\left[(C_{T}^{\epsilon*})^{1/ \epsilon}\right]
&\leqslant& \limsup_{\epsilon \rightarrow 0}
\left[\epsilon \log\left(\frac{1}{1-\epsilon}\right)^{1/\epsilon-1}
+\epsilon \log \frac{LT}{2\epsilon}
+\left(\frac{1}{1-\epsilon}\right)^{1/\epsilon-1}\frac{LT}{2}\right]\\
&\leqslant& \frac{e LT }{2} < \infty.
\end{eqnarray*}
Hence, by (\ref{log}), for all $T>0$, condition (i) of Thm.\ \ref{Aldous-pukhaskii} follows for the process $M_{T}^{\epsilon*}$:
\begin{equation}\label{isup}
        \lim_{K' \rightarrow \infty} \limsup_{\epsilon \rightarrow 0} \epsilon \log \mathbb{P}\left(M_{T}^{\epsilon*}\geqslant K'\right)=-\infty.
 \end{equation}

Secondly, we verify condition (ii) of Theorem \ref{Aldous-pukhaskii}. To this end, note that for arbitrary $T>0$, $\delta \leqslant 1$, and stopping time 
$\tau \in \Gamma_{T}(\mathcal{F}_t)$,
\begin{equation}\label{sep}
\mathbb{P}\left(\sup_{t\leqslant \delta}|M_{\tau+t}^{\epsilon}-M_{\tau}^{\epsilon}|\geqslant \eta \right)
\leqslant  
\mathbb{P}\left(\sup_{t\leqslant \delta}(M_{\tau+t}^{\epsilon}-M_{\tau}^{\epsilon})\geqslant \eta \right)+\mathbb{P}\left(\sup_{t\leqslant \delta}(M_{\tau}^{\epsilon}-M_{\tau+t}^{\epsilon})\geqslant \eta \right). 
\end{equation}
We can see that $M_{\tau+t}^{\epsilon}-M_{\tau}^{\epsilon}$ is a semimartingale with respect to the filtration $\{\mathcal{F}_{\tau+t}\}_{t\geqslant 0}$. For any $\tau \in \Gamma_{T}(\mathcal{F}_t)$, we denote
\[D_{t}^{\epsilon}:=\int_{\tau}^{\tau+t}b(X_{s}^{\epsilon}, M_{s}^{\epsilon})\DD s,\:\:\:\:
V_{t}^{\epsilon}:=\epsilon\int_{\tau}^{\tau+t}\sigma^{2}(X_{s}^{\epsilon}, M_{s}^{\epsilon})\DD s.\]
By (A.2), we have, for all $\lambda \in \rr$, $t\leqslant \delta \leqslant 1$ and $\tau \leqslant T$,
\[\lambda D_{t}^{\epsilon}+\frac{\lambda^2}{2}V_{t}^{\epsilon} \leqslant  |\lambda|K(1+M_{T+1}^{\epsilon*})t+\frac{\lambda^2\epsilon}{2}K^2(1+M_{T+1}^{\epsilon*})^2t,
\:\:\:\:\: \text{a.s..}\]
We define
\[H(\lambda):=|\lambda|+\frac{\lambda^2 \epsilon}{2},\:\:\:\:\:
\xi:=K(1+M_{T+1}^{\epsilon*}).\]
Then $M_{\tau+t}^{\epsilon}-M_{\tau}^{\epsilon}$ satisfies the conditions of Lemma \ref{expoineq} and,  for all $c>0, \eta>0$,
\[\mathbb{P}\left(\sup_{t\leqslant \delta}(M_{\tau+t}^{\epsilon}-M_{\tau}^{\epsilon})\geqslant \eta \right)
\leqslant \mathbb{P}(\xi>c)+\exp\left\{-\sup_{\lambda \in \rr}[\lambda \eta-\delta H(\lambda c)]\right\}.\]
Since $\mathbb{P}(\xi>c)=\mathbb{P}(M_{T+1}^{\epsilon*}>{c}/{K}-1)$, it follows that
\[\mathbb{P}\left(\sup_{t\leqslant \delta}(M_{\tau+t}^{\epsilon}-M_{\tau}^{\epsilon})\geqslant \eta \right)
\leqslant 2 \max \left(\mathbb{P}\left(M_{T+1}^{\epsilon*}>\frac{c}{K}-1\right),\exp\left\{-\sup_{\lambda \in \rr}[\lambda \eta-\delta H(\lambda c)]\right\}\right).\]
The supremum of $\lambda \eta-\delta H(\lambda c)$ can be explicitly evaluated:
\[\sup_{\lambda \in \rr}[\lambda \eta-\delta H(\lambda c)]=\sup_{\lambda \in \rr}\left[\lambda \eta-\delta|\lambda|c-\delta\frac{\lambda^2 c^2 \epsilon}{2}\right]
=\frac{1}{\epsilon}\sup_{\lambda >0}\left[(\eta \epsilon-\delta c\epsilon)\lambda-\frac{\delta c^2 \epsilon^2}{2}\lambda^2\right]=\frac{(\eta-\delta c)^2}{2\epsilon\delta c^2}.\]
As a consequence, for all positive $c$,
 \[\lim_{\delta \rightarrow 0} \limsup_{\epsilon \rightarrow 0} \epsilon \log \exp\left\{-\sup_{\lambda \in \rr}[\lambda \eta-\delta H(\lambda c)]\right\}=
 \lim_{\delta \rightarrow 0}-\frac{(\eta-\delta c)^2}{2\delta c^2}=-\infty.\]
It is concluded that for any $\tau \in \Gamma_{T}(\mathcal{F}_t)$ and $c>0$,
\[\lim_{\delta \rightarrow 0} \limsup_{\epsilon \rightarrow 0} \epsilon \log \mathbb{P}\left(\sup_{t\leqslant \delta}(M_{\tau+t}^{\epsilon}-M_{\tau}^{\epsilon})\geqslant \eta \right)
\leqslant \limsup_{\epsilon \rightarrow 0} \epsilon \log  \mathbb{P}\left(M_{T+1}^{\epsilon*}>\frac{c}{K}-1\right).
\]
By (\ref{isup}), we know
\[\lim_{c \rightarrow \infty}\limsup_{\epsilon \rightarrow 0} \epsilon \log \mathbb{P}\left(M_{T+1}^{\epsilon*}>\frac{c}{K}-1\right)=-\infty.\]
It implies
\begin{eqnarray*}\lefteqn{\lim_{\delta \rightarrow 0} \limsup_{\epsilon \rightarrow 0} \epsilon \log \sup_{\tau \in \Gamma_{T}(\mathcal{F}_t)} \mathbb{P}\left(\sup_{t\leqslant \delta}(M_{\tau+t}^{\epsilon}-M_{\tau}^{\epsilon})
\geqslant \eta \right)}\\
&\leqslant& \inf_{c>0} \limsup_{\epsilon \rightarrow 0} \epsilon \log \mathbb{P}\left(M_{T+1}^{\epsilon*}>\frac{c}{K}-1\right) =-\infty.
\end{eqnarray*}
Moreover, the claim
\[\lim_{\delta \rightarrow 0} \limsup_{\epsilon \rightarrow 0} \epsilon \log \sup_{\tau \in \Gamma_{T}(\mathcal{F}_t)} \mathbb{P}\left(\sup_{t\leqslant \delta}(M_{\tau}^{\epsilon}-M_{\tau+t}^{\epsilon})\geqslant \eta \right)
 =-\infty
\]
is proved in the same way. Thus the desired claim follows from (\ref{sep}). 
 \end{proof}

\section{Auxiliary results}
We first identify a dense subset of $\mathbb{C}_T \times \mathbb{M}_T$ which substantially simplifies the proof of the local LDP in the next two sections.
Let $\mathbb{M}_T^+$ be the subset of $\mathbb{M}_T$ such that $K_{\nu}(s, i)>0, \forall s\in[0, T], i \in \mathbb{S}$, and let $\mathbb{M}_T^{++}$ be the subset of $\mathbb{M}_T^+$ such that 
$K_{\nu}(\cdot, i) \in \mathbb{C}_{[0, T]}^{\infty}, \forall i \in \mathbb{S}$.  
\begin{lemma} \label{dense}
$\mathbb{M}_T^{++}$ is dense in $\mathbb{M}_T$.
\end{lemma}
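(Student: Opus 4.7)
My plan is to approximate an arbitrary $\nu \in \mathbb{M}_T$ in the $d_T$ metric by elements of $\mathbb{M}_T^{++}$ via a two-step recipe: first perturb the kernel by a small convex combination with the uniform distribution to make it strictly positive, and then mollify in $s$ to make it $C^\infty$, choosing an extension beyond $[0,T]$ that preserves all three required kernel properties (Borel measurability upgraded to smoothness, strict positivity, and sum-to-one).

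First, given $\eta \in (0,1)$, I would set $K^\eta_\nu(s,i) := (1-\eta)K_\nu(s,i) + \eta/d$, which is Borel measurable in $s$, bounded below by $\eta/d > 0$, and still satisfies $\sum_i K^\eta_\nu(s,i) = 1$. The induced $\nu^\eta$ lies in $\mathbb{M}_T^+$, and for each $i$ and $t \in [0,T]$ one has $|\int_0^t (K^\eta_\nu - K_\nu)(s,i)\,\DD s| = \eta |\int_0^t (1/d - K_\nu(s,i))\,\DD s| \leq \eta T$, so $d_T(\nu, \nu^\eta) \leq \eta T$.

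Next I would extend each $K^\eta_\nu(\cdot, i)$ to all of $\mathbb{R}$ by the constant value $1/d$ outside $[0,T]$ (this extension is still pointwise $\geq \eta/d$ and the state-sum is still $1$), fix a standard nonnegative mollifier $\rho_\delta \in C^\infty_c$ with support in $[-\delta,\delta]$ and $\int \rho_\delta = 1$, and define $K^{\eta,\delta}_\nu(s,i) := (\rho_\delta * K^\eta_\nu(\cdot,i))(s)$. Convolution preserves the pointwise lower bound $\eta/d > 0$, commutes with the finite sum over $i$ so that the sum-to-one constraint survives, and produces a $C^\infty$ function in $s$; hence $\nu^{\eta,\delta} \in \mathbb{M}_T^{++}$. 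To control the $d_T$ distance, a Fubini exchange gives
\begin{equation*}
\int_0^t K^{\eta,\delta}_\nu(s,i)\,\DD s = \int_{-\delta}^\delta \rho_\delta(u) \int_{-u}^{t-u} K^\eta_\nu(r,i)\,\DD r\,\DD u,
\end{equation*}
and comparing the inner integral with $\int_0^t K^\eta_\nu(r,i)\,\DD r$ yields a discrepancy bounded by $2\|K^\eta_\nu\|_\infty \,\delta \leq 2\delta$. Hence $d_T(\nu^\eta, \nu^{\eta,\delta}) \leq 2\delta$, and the triangle inequality produces $d_T(\nu, \nu^{\eta,\delta}) \leq \eta T + 2\delta$, which can be made arbitrarily small.

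The only substantive thing to get right is the interaction between mollification and the boundary of $[0,T]$: a naive extension (for instance by zero) would destroy the sum-to-one constraint near the endpoints, and an extension that lets some coordinate vanish would destroy strict positivity after convolution. The constant extension by $1/d$ resolves all of this at once, so the main obstacle is really a bookkeeping one rather than an analytic one, and the argument then reduces to the standard facts that translating integrals by at most $\delta$ costs at most $O(\delta)$ for bounded integrands and that convex combination with the uniform distribution costs at most $O(\eta)$.
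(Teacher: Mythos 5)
Your proof is correct and follows the same basic strategy as the paper's (a convex perturbation toward the uniform kernel to secure strict positivity, then convolution with a mollifier to secure smoothness), but you reverse the order of the two steps, and this is a genuine improvement. The paper first mollifies a $\nu\in\mathbb{M}_T^+$ and only then shows $\mathbb{M}_T^+$ is dense in $\mathbb{M}_T$; since an arbitrary $\nu\in\mathbb{M}_T^+$ has no \emph{uniform} lower bound on its kernel, the strict positivity of the mollified kernel requires a short a.e.-positivity argument. By perturbing first, you hand the mollification step a kernel with a uniform lower bound $\eta/d$, which convolution manifestly preserves, so the positivity is immediate. Your choice of extension by the constant $1/d$ (rather than by the boundary values $K_\nu(0,i)$, $K_\nu(T,i)$ as in the paper) is another simplification in the same spirit. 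Finally, your Fubini computation gives an explicit, quantitative bound $d_T(\nu^\eta,\nu^{\eta,\delta})\leqslant 2\delta$, whereas the paper invokes dominated convergence plus a Dini/P\'olya-type uniform-convergence result for monotone functions (Jacod, Result 1.1.21); your bound is more elementary and self-contained.
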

\begin{proof}
We prove the claim in two steps. Firstly, we show that $\mathbb{M}_T^{++}$ is dense in $\mathbb{M}_T^{+}$.
We begin by introducing the standard {\it mollifier} $J(x)$ on $\rr$, i.e.,
\begin{equation*}
J(x):= \left\{
 {\displaystyle 
  \begin{array}{l l}{\displaystyle 
k\exp\left(\frac{1}{|x|^2-1}\right) }& \quad 
 \text{ if } |x|<1,\\
   0 & \quad \text{ if } |x| \geqslant 1,
  \end{array} }\right.
\end{equation*}
where $k>0$ is selected so that $\int_{\rr}J(x)\DD x=1.$ For each $\eta>0$, we define $J_{\eta}(x):={\eta}^{-1}J({x}/{\eta})$.
For any $\nu$ in $\mathbb{M}_T^+$, we extend the domain of $K_{\nu}(\cdot, i)$ to $(-1, T+1)$, as follows: 
 \begin{equation*}
K_{\nu}(s, i):= \left\{
 {\displaystyle 
  \begin{array}{l l}{\displaystyle 
K_{\nu}(s, i) } & \quad  \text{ if } s \in [0, T],\\
K_{\nu}(0, i) & \quad \text{ if } s \in (-1, 0),\\
K_{\nu}(1, i) & \quad \text{ if } s \in (T, T+1).\end{array} }\right.
\end{equation*}
Since $K_{\nu}(\cdot, i)$ is integrable on $(-1, T+1)$ for each $i \in \mathbb{S}$, we can define its mollification as
\[K_{\nu}^{\eta}(s, i)=\int_{-\eta}^{\eta}J_{\eta}(y)K_{\nu}(s-y, i)\DD y,  \text{ for } s\in (-1+\eta, T+1-\eta), \eta <1.\]
By Theorem C.6 in Evans \cite{evans},  $K_{\nu}^{\eta}(\cdot, i)$ is smooth on $(-1+\eta, T+1-\eta)$ and $K_{\nu}^{\eta}(\cdot , i) \rightarrow K_{\nu}(\cdot , i)$ almost everywhere as $\eta \rightarrow 0$.

Next we proceed to show that $\nu^{\eta}$ with the kernel  $K_{\nu}^{\eta}(s, i)$ is an element of $\mathbb{M}_T^{++}$. It is clear that $K_{\nu}^{\eta}(\cdot, i)\in  \mathbb{C}_{[0,T]}^{\infty}$ when restricted to $[0, T]$, and in addition we have $K_{\nu}^{\eta}(s, i)>0,$ for all $s\in[0, T]$ and $i \in \mathbb{S}$. As a consequence, we only need to prove that $\sum_{i=1}^{d}K_{\nu}^{\eta}(s, i)=1$, for all $s \in [0, T]$.  For any $ s \in [0, T]$ and $y \in [-\eta, \eta]$, it holds that $\sum_{i=1}^{d}K_{\nu}(s-y, i)=1$ since  $s-y \in (-1, T+1)$. We thus have that
\[\sum_{i=1}^{d}K_{\nu}^{\eta}(s, i)=\int_{-\eta}^{\eta}J_{\eta}(y)\sum_{i=1}^{d}K_{\nu}(s-y, i)\DD y=\int_{-\eta}^{\eta}J_{\eta}(y)\DD y=1.\]

For any $t \leqslant T, i \in \mathbb{S}$, due to the fact that $K_{\nu}^{\eta}(\cdot , i) \rightarrow K_{\nu}(\cdot , i)$ on $(0, t)$ almost everywhere  as $\eta \rightarrow 0$ and $K_{\nu}^{\eta}(s , i) \leqslant 1$, it holds that 
\[\int_{0}^{t}K_{\nu}^{\eta}(s , i) \DD s \rightarrow \int_{0}^{t}K_{\nu}(s , i) \DD s, \text{ as } \eta \rightarrow 0,\] appealing to the dominated convergence theorem. Since $\int_{0}^{t}K_{\nu}^{\eta}(s , i) \DD s$ is increasing in $t$, $\int_{0}^{t}K_{\nu}(s , i) \DD s$ is continuous in $t$ and $d$ is finite, we obtain the following uniform convergence:
\[d_T(\nu^{\eta}, \nu)=\sup_{t\in [0, T], i \in \mathbb{S}}\left|\int_{0}^{t}K_{\nu}^{\eta}(s , i) \DD s-\int_{0}^{t}K_{\nu}(s , i) \DD s\right|\rightarrow 0, \text{ as } \eta \rightarrow 0, \]
using Result 1.1.21 in Jacod \cite{jacod1}. 

Secondly, we prove that $\mathbb{M}_T^{+}$ is dense in $\mathbb{M}_T$. Noticing that $K_{\nu}(s, i)$ can be 0 for some $ i, s$, we define (for any $\nu \in  \mathbb{M}$) an $\nu^{\eta} \in \mathbb{M}_T^+$ through
\[K_{\nu}^{\eta}(s, i):=\frac{K_{\nu}(s, i)+\eta}{1+\eta d},\] $\eta>0$, for all $ i, s$. As is directly verified,  $d_T(\nu^{\eta}, \nu)\leqslant T({\eta+\eta d})/({1+\eta d}).$ Then the desired result holds.
Consequently, $\mathbb{M}_T^{++}$ is dense in $\mathbb{M}_T$ by the triangle inequality.
\end{proof}

We then present a regularity property of the rate function $\tilde{I}_T(\nu)$ on $\mathbb{M}_T^{++}$.
\begin{lemma}\label{optimizer} Fix $s\in[0,T]$ and $\nu \in \mathbb{M}_T^{++}$. 
Then there is an optimizer $u^*(s,  \cdot)$ of \[\inf_{u \in U}\left[\sum_{i=1}^{d}\frac{(Qu)(i)}{u(i)}K_{\nu}(s, i)\right]\]  such that
$u^*(\cdot,  i) \in \mathbb{C}_{[0, T]}^{\infty},$ for all $i \in \mathbb{S}$, and $u^* \in \mathbb{U}$.
\end{lemma}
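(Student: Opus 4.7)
The plan is to reduce the optimisation to an unconstrained convex problem via a logarithmic change of variables, establish existence and uniqueness of the minimiser for each fixed $s$, and then invoke the implicit function theorem to transfer the smoothness in $s$ of $K_{\nu}$ to that of the minimiser.

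First I exploit the scale invariance $F(cu;s)=F(u;s)$ of the functional $F(u;s):=\sum_{i}(Qu)(i)\,K_{\nu}(s,i)/u(i)$ to normalise $u(1)=1$ and substitute $v(i):=\log u(i)$, so that $v(1)=0$. The problem becomes minimisation over $v=(v(2),\ldots,v(d))\in\rr^{d-1}$ of
\begin{equation*}
g(v;s)=\sum_{i=1}^{d} Q_{ii}K_{\nu}(s,i)+\sum_{i\neq j} a_{ij}(s)\,e^{v(j)-v(i)},\qquad a_{ij}(s):=Q_{ij}K_{\nu}(s,i).
\end{equation*}
By (A.4) and $\nu\in\mathbb{M}_{T}^{++}$, each coefficient $a_{ij}(s)$ with $i\neq j$ is strictly positive; sending any $v(k)$ to $\pm\infty$ forces an exponential term (take $l=1$ or $k=1$) to blow up, so $g(\cdot;s)$ is continuous and coercive, and hence its infimum is attained at some $v^{*}(s)\in\rr^{d-1}$.

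Next I compute the Hessian $H$ of $g$ with respect to $v$ (treating $v(1)$ provisionally as free). A direct calculation yields
\begin{equation*}
H_{kl}(v;s)=-\bigl[a_{kl}(s)e^{v(l)-v(k)}+a_{lk}(s)e^{v(k)-v(l)}\bigr]\ (k\neq l),\qquad H_{kk}(v;s)=-\sum_{l\neq k}H_{kl}(v;s),
\end{equation*}
exhibiting $H$ as a symmetric weighted graph Laplacian with weights $w_{kl}=a_{kl}(s)e^{v(l)-v(k)}+a_{lk}(s)e^{v(k)-v(l)}>0$ for all $k\neq l$. The standard identity $x^{\top}Hx=\tfrac{1}{2}\sum_{k\neq l}w_{kl}(x_{k}-x_{l})^{2}$ shows that $H$ is positive semi-definite with kernel $\rr\mathbf{1}$; restricted to the hyperplane $\{v(1)=0\}$ it is therefore strictly positive definite. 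Consequently $g(\cdot;s)$ is strictly convex in the free variables, so $v^{*}(s)$ is unique. Finally I apply the implicit function theorem to the smooth system $\nabla_{v}g(v;s)=0$ on $\rr^{d-1}\times[0,T]$: its Jacobian in $v$ at $v=v^{*}(s)$ is precisely the restricted Hessian above, which is invertible, and the dependence on $s$ is $\mathbb{C}^{\infty}$ because $K_{\nu}(\cdot,i)\in\mathbb{C}_{[0,T]}^{\infty}$. This yields $s\mapsto v^{*}(s)\in\mathbb{C}_{[0,T]}^{\infty}$, whence $u^{*}(\cdot,i):=\exp(v^{*}(\cdot,i))\in\mathbb{C}_{[0,T]}^{\infty}$ and is strictly positive; compactness of $[0,T]$ then gives $\inf_{s,i}u^{*}(s,i)>0$, confirming $u^{*}\in\mathbb{U}$.

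The main obstacle is the convexity step: one must recognise that after the log substitution the Hessian has precisely the weighted graph-Laplacian structure, so that positive definiteness on the hyperplane $\{v(1)=0\}$ follows from the irreducibility assumption (A.4). Once this observation is in place, coercivity gives existence, strict convexity gives uniqueness, and the implicit function theorem mechanically upgrades the $\mathbb{C}^{\infty}$ regularity of $K_{\nu}$ in $s$ to that of $u^{*}$.
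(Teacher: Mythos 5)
Your proof is correct, and while it follows the same backbone as the paper's — a logarithmic change of variables, strict convexity, and the implicit function theorem — it takes a cleaner and more transparent route in two respects. The paper parametrises by all $d(d-1)/2$ log-ratios $x_{ji}=\log(u(j)/u(i))$, imposes the $\psi(d)=(d-1)(d-2)/2$ consistency constraints $r_{ij}r_{jk}=r_{ik}$ via a linear map $\boldsymbol{X}_1=\boldsymbol{L}\boldsymbol{X}_0$, and then has to argue that the constrained Hessian $(\boldsymbol{I}\;\boldsymbol{L}^{\rm T})\,\boldsymbol{D}_{\boldsymbol{X}}^{2}f\binom{\boldsymbol{I}}{\boldsymbol{L}}$ is nonsingular using that $\boldsymbol{D}_{\boldsymbol{X}}^{2}f$ is positive-definite diagonal and $\boldsymbol{L}$ has full rank. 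You instead exploit scale invariance directly, fix $u(1)=1$, and work with the $d-1$ unconstrained variables $v(2),\dots,v(d)$; the Hessian of $g(\cdot;s)$ then appears as a weighted graph Laplacian with strictly positive off-diagonal weights (thanks to (A.4) and $\nu\in\mathbb{M}_T^{++}$), whose kernel is $\rr\mathbf{1}$, so the restriction to $\{v(1)=0\}$ is positive definite by the identity $x^{\top}Hx=\tfrac12\sum_{k\neq l}w_{kl}(x_k-x_l)^2$. This makes strict convexity, uniqueness, and invertibility of the Jacobian manifest without bookkeeping the constraint matrix $\boldsymbol{L}$. You also give a clean coercivity argument for existence (any $v(k)\to\pm\infty$ blows up an exponential coupled to the pinned coordinate $v(1)=0$), where the paper only remarks more briefly that no entry of the minimiser can diverge. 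The final implicit-function-theorem step and the passage back to $u^*=\exp(v^*)\in\mathbb{U}$ via compactness of $[0,T]$ are the same in both. In short: same overall strategy, but your reduction avoids the constrained-ratio machinery and identifies the exact algebraic structure that guarantees positive definiteness.
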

\begin{proof}
As obviously
\[\sum_{i=1}^{d}\frac{(Qu)(i)}{u(i)} K_{\nu}(s, i)=\sum_{i=1}^{d}Q_{ii} K_{\nu}(s, i)+\sum_{i=1}^{d}\frac{\sum_{j\neq i}^{d}Q_{ij}u(j)}{u(i)} K_{\nu}(s, i),\] the optimization problem essentially reduces to
\[\inf_{u \in U}\left[\sum_{i=1}^{d}\frac{\sum_{j\neq i}^{d}Q_{ij}u(j)}{u(i)} K_{\nu}(s, i)\right].\] We let $r_{ji}:={u(j)}/{u(i)}$, for $i \neq j$. Since $r_{ji}=1/r_{ij}$, the optimization problem can be written as a minimization over $d(d-1)/2$ variables:
\[\inf_{r_{ji}>0}\sum_{i=1}^d \sum_{j=1}^{i-1}  \left[ Q_{ij}r_{ji}K_{\nu}(s, i)+Q_{ji}r_{ji}^{-1}K_{\nu}(s, j). \right]\]
Observe that for any $i,j,k$ the equality $r_{ij} r_{jk}= r_{ik}$ needs to hold, which corresponds to $\psi(d):=(d-1)(d-2)/2$ constraints. 
We then perform the change of variables $x_{ji}:=\log r_{ji}$,  and denote by ${\boldsymbol X}=(x_{21}, \cdots, x_{d(d-1)})^{\rm T}$ the $d(d-1)/2$ variables.
Letting ${\boldsymbol K}_{\nu}(s)=(K_{\nu}(s, 1), \cdots, K_{\nu}(s, d))^{\rm T}$, we transform the above optimization problem into
\[\inf_{\boldsymbol{X}} f(\boldsymbol{K}_{\nu}(s), \boldsymbol{X}), \:\:\text{ where } f(\boldsymbol{K}_{\nu}(s), \boldsymbol{X}):=\sum_{i=1}^d \sum_{j=1}^{i-1}  \left[ Q_{ij}e^{x_{ji}}K_{\nu}(s, i)+Q_{ji}e^{-x_{ji}}K_{\nu}(s, j) \right], \]
with $(d-1)(d-2)/2$ additional constraints to be imposed.

The gradient vector of $f$ with respect to $\boldsymbol{X}$ is
\[\bold{D}_{ \boldsymbol{X}}f=\left(\frac{\partial f}{\partial x_{21}}, \cdots, \frac{\partial f}{\partial x_{d(d-1)}} \right),\]
and the corresponding Hessian matrix 
$\boldsymbol{D}_{\boldsymbol{X}}^2 f $ is the diagonal matrix which has entries of the form $Q_{ij}e^{x_{ji}}K_{\nu}(s, i)+Q_{ji}e^{-x_{ji}}K_{\nu}(s, j)$ on its diagonal.
The idea is now to split the vector $\boldsymbol{X}$ into $\boldsymbol{X}_0=(x_{21}, \cdots, x_{d1})^{\rm T}$  and  $\boldsymbol{X}_1$ (where the latter vector corresponds with the remaining $\psi(d)$ variables). Due to the constraints, we have $\boldsymbol{X}_1= \boldsymbol{L} \boldsymbol{X}_0$ where $\boldsymbol{L}$ is a matrix of dimension $\psi(d) \times (d-1)$.  
The next step is to include the constraints  into the optimization equation $f$.  It yields the following new optimization problem, on which no additional constraints need to ne imposed anymore:
\[\inf_{\boldsymbol{X}_0}  \hat{f}(\boldsymbol{K}_{\nu}(s), \boldsymbol{X}_0), \text{ where } \hat{f}(\boldsymbol{K}_{\nu}(s), \boldsymbol{X}_0)=f(\boldsymbol{K}_{\nu}(s), (\boldsymbol{X}_0, \boldsymbol{L} \boldsymbol{X}_0)).\]
{Observe that $f$ is a globally strictly convex function of $\boldsymbol{X}$ on a convex domain, and consequently $\hat{f}$ is a strictly convex function of $\bold{X}_0$.
Hence, there is a unique minimizer $\boldsymbol{X}^*_0(s)=(x^*_{21}(s), \cdots, x^*_{d1}(s))^{\rm T}$ for any $s \in [0, T]$. Since we have that both $K_{\nu}(s, i)>0$ and $Q_{ij}>0$ for $ i \neq j$, any entry of $\boldsymbol{X}^*_0(s)$ cannot be $-\infty$ or $\infty$. We thus conclude that $\boldsymbol{X}^*_0(s) \in \rr^{d-1}$.}

Let $\boldsymbol{I}$ be the $(d-1)$-dimensional identity matrix. Then we define 
\[\tilde{f}(\boldsymbol{K}_{\nu}(s), \boldsymbol{X}_0):=\bold{D}_{\boldsymbol{X}_0}\hat{f}(\boldsymbol{K}_{\nu}(s), \boldsymbol{X}_0)=\bold{D}_{\boldsymbol{X}}f(\boldsymbol{K}_{\nu}(s), (\boldsymbol{X}_0, \boldsymbol{L} \boldsymbol{X}_0)){\boldsymbol{I} \choose \boldsymbol{L}},\]
which is a smooth function on $ \rr^d\times \rr^{d-1}$ such that $\tilde{f}(\boldsymbol{K}_{\nu}(s), \boldsymbol{X}^*_0(s) )=0$.
The gradient matrix of $\tilde{f}$ with respect to $ \boldsymbol{X}_0$ evaluated in $\boldsymbol{X}^*_0(s)$ is
\[\boldsymbol{G}:=\bold{D}_{\boldsymbol{X}_0}^2\tilde{f}(\boldsymbol{K}_{\nu}(s), \boldsymbol{X}^*_0(s))=(\boldsymbol{I} \:\:\: \boldsymbol{L}^{\rm T})\, \bold{D}_{\boldsymbol{X}}^2 f (\boldsymbol{K}_{\nu}(s), (\boldsymbol{X}^*_0(s),\boldsymbol{L} \boldsymbol{X}^*_0(s))){\boldsymbol{I} \choose \boldsymbol{L}}.\]
Let $|\boldsymbol{G}|$ denote the determinant of $\boldsymbol{G}$. Since $\boldsymbol{H}$ is a positive-definite diagonal matrix and $\boldsymbol{L}$ is of full rank, we conclude that $|\boldsymbol{G}|\neq 0,$ for all $s \in (0, T)$.

Hence, the implicit function theorem (cf.\ Theorem C.8 in Evans \cite{evans}) implies that $\boldsymbol{X}^*_0(s)$ is a smooth function of $\boldsymbol{K}_{\nu}(s)$: since $K_{\nu}(\cdot, i) \in \mathbb{C}_{[0, T]}^{\infty}$ for all $i \in \mathbb{S}$, we conclude that $\boldsymbol{X}^*_0(s) \in \mathbb{C}_{[0, T]}^{\infty}(\rr^{d-1})$. It also follows that the corresponding minimizer in terms of the variables $r_{ij}$, say $(r^*_{21}(s), \cdots, r^*_{d 1} (s))$, is in $\mathbb{C}_{[0, T]}^{\infty}((0, \infty)^{d-1})$. Recalling that $r_{ji}={u(j)}/{u(i)}$, we set 
\[(u^*(s, 1), u^*(s, 2), \cdots, u^*(s, d))\equiv (1, r^*_{21}(s), \cdots, r^*_{d 1} (s) )\] on $[0, T]$. Then $(u^*(s, 1), \cdots, u^*(s, d))$ is an optimizer corresponding to
\[\inf_{u \in U}\left[\sum_{i=1}^{d}\frac{(Qu)(i)}{u(i)}K_{\nu}(s, i)\right],\] and $u^*(\cdot,  i) \in \mathbb{C}_{[0, T]}^{\infty}$ for all $i \in \mathbb{S}$.  
It is easily seen that $u^*(s, i)> 0$. Then $\inf_{s \in [0, T], i \in \mathbb{S}} u^*(s, i)>0$ by continuity of $u^*$ on $[0, T]$. Hence, $u^* \in \mathbb{U}$.
\end{proof}

The following continuity property of the rate functions will be used in proving the upper and lower bounds. 
\begin{lemma} \label{continuity}
Let $\nu^{\eta}, \nu \in \mathbb{M}_T$ with kernels $K_{\nu}^{\eta}$ and $K_{\nu}$ such that $K_{\nu}^{\eta}(\cdot, i) \rightarrow K_{\nu}(\cdot, i)$ a.e.\ as $\eta \rightarrow 0$ on $[0, T]$  for each $i\in \mathbb{S}$. Then
\begin{itemize}
\item[(i)] $\tilde{I}_{T}(\nu^{\eta}) \rightarrow \tilde{I}_{T}(\nu)$ as $\eta \rightarrow 0$;
\item[(ii)] $I_T(\varphi, \nu^{\eta}) \rightarrow I_T(\varphi, \nu)$ as $\eta \rightarrow 0$, $ \forall \varphi \in \mathbb{H}_T$, if \,$\inf_{i, x} \sigma^{2}(i, x)>0$.\end{itemize}
\end{lemma}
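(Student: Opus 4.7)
The plan is to prove both parts by the dominated convergence theorem applied to the respective integrands, after establishing pointwise a.e.\ convergence together with an integrable dominating function.

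For part (i), write the integrand as $h(K):=\sup_{u\in U}\bigl[-\sum_{i=1}^d (Qu)(i)\,K(i)/u(i)\bigr]$ on the probability simplex $\Delta_d$. The first step is a uniform bound on $h$. Splitting $(Qu)(i)/u(i)=Q_{ii}+\sum_{j\neq i}Q_{ij}u(j)/u(i)$ yields
\[h(K)=\sum_{i}K(i)|Q_{ii}|-\inf_{u>0}\sum_{i\neq j}K(i)Q_{ij}\frac{u(j)}{u(i)},\]
where the infimum is nonnegative (all summands are) and is seen to be zero on the constant direction $u\equiv 1$, so $0\le h(K)\le \max_i|Q_{ii}|=:M_Q$. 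The second step is continuity of $h$ on $\Delta_d$. As a supremum of affine functions in $K$ it is convex and lower semicontinuous, and convexity together with finiteness gives continuity on the relative interior. At a boundary point (say $K(1)=\cdots=K(\ell)=0$), the optimization effectively reduces to the irreducible sub-chain on $\{\ell+1,\dots,d\}$; for $K^n\to K$, I would upgrade a near-optimizer $u^\star$ on that sub-chain to a test vector on $\Delta_d$ by setting $u^n(k):=\sqrt{K^n(k)}$ for $k\le \ell$, so that the cross-terms $K^n(k)Q_{kj}u^n(j)/u^n(k)$ and $K^n(i)Q_{ik}u^n(k)/u^n(i)$ are both $O(\sqrt{K^n(k)})\to 0$. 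This yields $\limsup h(K^n)\le h(K)$, giving upper semicontinuity and hence continuity.

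With continuity and uniform boundedness of $h$ in hand, the hypothesis $K_{\nu^\eta}(\cdot,i)\to K_\nu(\cdot,i)$ a.e.\ for each $i$ implies $h(K_{\nu^\eta}(s,\cdot))\to h(K_\nu(s,\cdot))$ a.e.\ in $s\in[0,T]$, and dominated convergence against the constant $M_Q$ on $[0,T]$ gives $\tilde I_T(\nu^\eta)\to\tilde I_T(\nu)$.

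For part (ii), since $\varphi\in\mathbb H_T\subset\mathbb C_T$ is continuous on $[0,T]$, set $M:=\sup_{t\le T}|\varphi_t|<\infty$. The hypothesis $c:=\inf_{i,x}\sigma^2(i,x)>0$ combined with $\sum_i K_{\nu^\eta}(t,i)=1$ gives $\hat\sigma^2(\nu^\eta,\varphi_t)\ge c$ uniformly in $\eta$ and $t$, so the denominator is bounded away from zero. By Jensen's inequality and (A.2), $\hat b(\nu^\eta,\varphi_t)^2\le \max_i b(i,\varphi_t)^2\le K^2(1+M)^2$. Consequently the integrand of $I_T(\varphi,\nu^\eta)$ is dominated by $c^{-1}\bigl[\varphi'(t)^2+K^2(1+M)^2\bigr]$, which lies in $L^1([0,T])$ because $\varphi'\in L^2([0,T])$. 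The pointwise a.e.\ convergence $K_{\nu^\eta}(t,i)\to K_\nu(t,i)$ gives a.e.\ convergence of the integrand (the denominator being bounded below makes the division harmless), and dominated convergence delivers $I_T(\varphi,\nu^\eta)\to I_T(\varphi,\nu)$.

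The main obstacle is the continuity of $h$ at boundary points of the simplex in part (i); everything else is a direct application of dominated convergence once the domination is in place. Part (ii) is essentially routine because the assumption $\inf\sigma^2>0$ precisely prevents the denominator from approaching the troublesome region $\hat\sigma^2=0$ where $I_T$ is discontinuous.
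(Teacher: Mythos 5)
Your overall strategy --- pointwise a.e.\ convergence of the integrand plus an integrable dominating function, then dominated convergence --- is exactly the paper's, for both parts. The gaps are in the details of part (i).

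First, a small but real error in the boundedness step: after writing
$h(K)=\sum_{i}K(i)\lvert Q_{ii}\rvert-\inf_{u>0}\sum_{i\neq j}K(i)Q_{ij}u(j)/u(i)$,
you claim the infimum ``is seen to be zero on the constant direction $u\equiv 1$.'' It is not: at $u\equiv 1$ the sum equals $\sum_{i}K(i)\sum_{j\neq i}Q_{ij}=\sum_i K(i)\lvert Q_{ii}\rvert>0$ for any $K$ in the simplex. (For $d=2$ with both $K(i)>0$ the infimum is $2\sqrt{K(1)K(2)Q_{12}Q_{21}}$, strictly positive.) What is true, and all you actually need, is that the infimum is nonnegative, which gives $h(K)\leqslant\max_i\lvert Q_{ii}\rvert$, and that $u\equiv 1$ in the \emph{original supremum} gives $h(K)\geqslant 0$ since $Q\mathbf{1}=0$. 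So the bound $0\leqslant h\leqslant M_Q$ is correct, but the justification as written is garbled.

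The more serious problem is the boundary-continuity argument. Since $h$ is a supremum of functions continuous in $K$, lower semicontinuity is automatic; the content is upper semicontinuity. Your construction --- take a near-optimizer $u^\star$ for $h(K)$ on the support of $K$, extend it to $u^n$ on all of $\mathbb{S}$ with $u^n(k)=\sqrt{K^n(k)}$ on the vanishing coordinates, and observe the cross-terms tend to zero --- only produces a lower bound on $h(K^n)$, since evaluating the supremum at a specific test vector bounds it from below. That yields $\liminf_n h(K^n)\geqslant h(K)$, i.e.\ lower semicontinuity again, not the $\limsup_n h(K^n)\leqslant h(K)$ you assert. As it stands there is no proof of upper semicontinuity at boundary points, which is precisely the nontrivial direction. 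The paper sidesteps this by citing den~Hollander's Lemma~4.22 for the continuity (and positivity) of $\rho\mapsto -\inf_{u\in U}\sum_i (Qu)(i)\rho(i)/u(i)$ on the simplex; you would either need to cite such a result or give a correct direct argument (e.g.\ via compactifying the supremum by normalizing $u$ and taking a suitable closure).

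Part (ii) matches the paper's proof. One small slip: $(\varphi'(t)-\hat b)^2\leqslant \varphi'(t)^2+K^2(1+M)^2$ is false (you dropped the cross-term); use $(\varphi'(t)-\hat b)^2\leqslant 2\varphi'(t)^2+2K^2(1+M)^2$ or $(\lvert\varphi'(t)\rvert+K(1+M))^2$, as the paper does. The dominating function is still in $L^1([0,T])$, so the conclusion is unaffected.
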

\begin{proof}
(i) Let $\rho$ be a $d$-dimensional vector such that $\sum_{i=1}^{d}\rho(i)=1$ and $\rho(i) \geqslant 0$.
By Lemma 4.22 in den Hollander \cite{Den hollander},  \[- \inf_{u \in U}\left[\sum_{i=1}^{d}\frac{(Qu)(i)}{u(i)} \rho(i)\right]\] is continuous in $\rho$ and positive. Moreover, for all $\rho$,  realizing that the $Q_{ii}$ are negative,
\[ -\inf_{u \in U}\left[\sum_{i=1}^{d}\frac{(Qu)(i)}{u(i)} \rho(i)\right] \leqslant -\sum_{i=1}^{d}Q_{ii} \] as a consequence of
\[ -\inf_{u \in U}\left[\sum_{i=1}^{d}\frac{(Qu)(i)}{u(i)} \rho(i)\right]=-\sum_{i=1}^{d}Q_{ii} \rho(i)-\inf_{u \in U}\left[\sum_{i=1}^{d}\frac{\sum_{j\neq i}^{d}Q_{ij}u(j)}{u(i)} \rho(i)\right].\]
Hence, \[\sup_{u \in U}\left[-\sum_{i=1}^{d}\frac{(Qu)(i)}{u(i)}K_{\nu}^{\eta}(s, i)\right] \rightarrow \sup_{u \in U}\left[-\sum_{i=1}^{d}\frac{(Qu)(i)}{u(i)}K_{\nu}(s, i)\right],\] as $\eta \rightarrow 0$, almost everywhere on $[0, T]$. Also, \[-\inf_{u \in U}\left[\sum_{i=1}^{d}\frac{(Qu)(i)}{u(i)}K_{\nu}^{\eta}(s, i)\right] \leqslant -\sum_{i=1}^{d}Q_{ii}\] for all $s$.
Then the desired result follows directly by applying the dominated convergence theorem. 

(ii) When $\inf_{i, x} \sigma^{2}(i, x)>0$, it is easily seen by continuity that
\[\frac{\varphi'_{t}-\hat{b}(\nu^{\eta}, \varphi_{t})]^2}{\hat{\sigma}^2(\nu^{\eta}, \varphi_{t}) }\rightarrow \frac{[\varphi'_{t}-\hat{b}(\nu, \varphi_{t})]^2}{\hat{\sigma}^2(\nu, \varphi_{t})}\] a.e.\ as $\eta \rightarrow 0$.
Let $\sigma^2$ denote $\inf_{i, x} \sigma^{2}(i, x)$. For every $\nu \in \mathbb{M}_T$, we have
\[ \frac{[\varphi'_{t}-\hat{b}(\nu, \varphi_{t})]^2}{\hat{\sigma}^2(\nu, \varphi_{t})} \leqslant \frac{|\varphi'_{t}|^2+(\sum_{i=1}^{d}|b(i, \varphi_t)|)^2+2|\varphi'_{t}|(\sum_{i=1}^{d}|b(i, \varphi_t)|)}{\sigma^2}.\]
Since $\varphi$ is absolutely continuous and $b(i, x)$ is Lipschitz continuous in $x$, $\sum_{i=1}^{d}|b(i, \varphi_t)|< b < \infty$ on $[0, T]$. Hence, 
$ [\varphi'_{t}-\hat{b}(\nu, \varphi_{t})]^2/\hat{\sigma}^2(\nu, \varphi_{t})\leqslant (|\varphi'_{t}|+b)^2/\sigma^2$. Since $\varphi'$ is square-integrable on $[0, T]$,  $I_T(\varphi, \nu^{\eta}) \rightarrow I_T(\varphi, \nu)$ as $\eta \rightarrow 0$ by again applying the dominated convergence theorem.
\end{proof}

\section{Upper bound for the local LDP}
This section considers the upper bound in the local LDP, whereas the next section concentrates on the corresponding lower bound. 
Recall that our  aim is to establish
\[\limsup_{\delta\rightarrow 0}\limsup_{\epsilon \rightarrow 0} \epsilon \log \PP
(\rho_T(M^{\epsilon}, \varphi)+d_T(\nu^{\epsilon}, \nu)\leqslant \delta)\leqslant -L_{T}(\varphi, \nu),\]
with $L_T(\varphi,\nu)$ as defined in Section \ref{prem}.
Our approach, which has a simlar structure as the one used in Liptser in \cite{Liptser1}, finds an exponential (in $\epsilon$, that is) upper bound on 
the probability $\PP
(\rho_T(M^{\epsilon}, \varphi)+d_T(\nu^{\epsilon}, \nu)\leqslant \delta)$ relying on the method of 
stochastic exponentials. As it turns out, this bound should contain the rate function $L_{T}(\varphi, \nu)$, as desired.

We start by introducing some additional notation.
Let $\mathbb{S}_T$ denote the space of all step functions on $[0, T]$ of the form,
for $k\in{\mathbb N}$ and real numbers $\lambda_0,\cdots,\lambda_k$,
\[\lambda(t)=\lambda_{0}\one_{\{t=0\}}(t)+\sum_{i=0}^{k}\lambda_{i}\one_{(t_{i}, t_{i+1}]}(t), 
\:\:\:\:0=t_0<\cdots<t_{k+1}=T.\]
For any $\varphi \in \mathbb{C}_{T}$, we introduce the following notation
\[
\int_{0}^{T}\lambda(s)\DD \varphi_{s}:=\sum_{i=0}^{k}\lambda_{i}[\varphi_{T\wedge t_{i+1}}-\varphi_{T\wedge t_{i}}].
\]
In the sequel we frequently use the process
\[N_{t}^{\epsilon}:=\frac{1}{\sqrt{\epsilon}}\int_{0}^{t}\lambda(s)\sigma(X_{s}^{\epsilon}, M_{s}^{\epsilon})\DD B_{s}
,\:\:\:\:\: \lambda \in \mathbb{S}_T,\]
which has the stochastic exponential 
\[\eee(N^{\epsilon})_{t}=\exp\left(N_{t}^{\epsilon}-\frac{1}{2}\langle N^{\epsilon}\rangle_{t}\right),\:\:\mbox{where}\:\:\:\:
 \langle N^{\epsilon}\rangle_{t}=\frac{1}{\epsilon}\int_{0}^{t}\lambda^2(s)\sigma^2(X_{s}^{\epsilon}, M_{s}^{\epsilon})\DD s.\]

Next we introduce a stochastic exponential associated with the occupation measure $\nu^{\epsilon}$. For any $u( \cdot, \cdot) \in \mathbb{U}$,   
\[\hat{N}_t^{\epsilon}=u(t, X_t^{\epsilon})-u(0, X^{\epsilon}_0)-\int_{0}^{t} \frac{\partial}{\partial s} u(s, X^{\epsilon}_s)\DD s-\int_0^t(Q^{\epsilon}u)(s, X^{\epsilon}_s) \DD s\]
is a local martingale on $[0, T]$ due to It\^{o}'s formula.  We define
\[\tilde{N}_{t}^{\epsilon}:=\int_{0}^{t} \frac{1}{u(s-, X^{\epsilon}_{s-})}\DD \hat{N}_s^{\epsilon}.\]
Then
\begin{equation}\label{N}
\eee(\tilde{N}^{\epsilon})_t=\frac{u(t, X^{\epsilon}_{t})}{u(0, X^{\epsilon}_{0})}\exp\left(-\int_{0}^{t}\frac{\frac{\partial}{\partial s} u(s, X^{\epsilon}_s)+(Q^{\epsilon}u)
(s, X^{\epsilon}_{s})}{u(s, X_{s}^{\epsilon})}\DD s\right)\end{equation}
is the stochastic exponential of $\tilde{N}_{t}^{\epsilon}$.  Indeed,
\begin{eqnarray*}
\DD \eee(\tilde{N}^{\epsilon})_t
&=& \frac{u(t, X^{\epsilon}_{t})}{u(0, X^{\epsilon}_{0})}\exp\left(-\int_{0}^{t}\frac{\frac{\partial}{\partial s} u(s, X^{\epsilon}_s)+(Q^{\epsilon}u)
(s, X^{\epsilon}_{s})}{u(s, X^{\epsilon}_{s})}\DD s\right)\\
&&\hspace{2cm}\times
\left(-\frac{\frac{\partial}{\partial t} u(t, X^{\epsilon}_t)+(Q^{\epsilon}u)
(t, X^{\epsilon}_{t})}{u(t, X^{\epsilon}_{t})}\DD t\right)\\
&&\hspace{2cm}+\:\exp\left(-\int_{0}^{t}\frac{\frac{\partial}{\partial s} u(s, X^{\epsilon}_s)+(Q^{\epsilon}u)
(s, X^{\epsilon}_{s})}{u(s, X^{\epsilon}_{s})}\DD s\right)\frac{\DD u(t, X^{\epsilon}_{t})}{u(0, X^{\epsilon}_{0})}\\
&=& \frac{\eee(\tilde{N}^{\epsilon})_{t-}}{u(t-, X^{\epsilon}_{t-})}\left[\DD u(t, X^{\epsilon}_{t})-\frac{\partial}{\partial t} u(t, X^{\epsilon}_t)\DD t-(Q^{\epsilon}u)
(t, X_{t})\DD t\right]\\
&=& \frac{\eee(\tilde{N}^{\epsilon})_{t-}}{u(t-, X^{\epsilon}_{t-})}\DD \hat{N}_t^{\epsilon}.
\end{eqnarray*}
Since $\inf_{t \in [0,T], i \in \mathbb{S}} u(t, i)>0$, $\tilde{N}_{t}^{\epsilon}$ is a local martingale and its stochastic exponential $\eee(\tilde{N}^{\epsilon})_t$ is also a local martingale by Theorem 1.4.61 in Jacod and Shiryaev \cite{jacod}. Then
$\eee(\tilde{N}^{\epsilon})_t$ is a martingale since it is bounded. We will use this martingale property when applying a change of measure in the next section.  The martingale $\eee(\tilde{N}^{\epsilon})_t$ is an extension of the exponential martingale studied by Palmowski and Rolski in \cite{Palmowski}.
\begin{lemma}\label{supermartingale}
$ \eee(\tilde{N}^{\epsilon})_t\eee(N^{\epsilon})_t$ is a local martingale, and 
$\ee[\eee(\tilde{N}^{\epsilon})_t\eee(N^{\epsilon})_t]\leqslant 1$. 
\end{lemma}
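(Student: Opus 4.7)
The plan is to invoke Yor's product formula for stochastic exponentials. First I would note that $N^{\epsilon}$ is a \emph{continuous} local martingale, being a stochastic integral against Brownian motion, whereas $\tilde{N}^{\epsilon}$ is a local martingale driven solely by the Markov chain $X^{\epsilon}$: its martingale part is purely discontinuous, with jumps occurring precisely at the jump times of $X^{\epsilon}$, while its continuous component (the compensator arising from the $\partial_s u + Q^{\epsilon} u$ terms in the It\^o formula for $u(t,X^{\epsilon}_t)$) is absolutely continuous and hence of finite variation.

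Next I would compute the covariation $[N^{\epsilon},\tilde{N}^{\epsilon}]$. Since $N^{\epsilon}$ is continuous, the jump contribution $\sum_{s\leqslant t}\Delta N^{\epsilon}_s\,\Delta\tilde{N}^{\epsilon}_s$ vanishes, and the continuous part of the covariation also vanishes because the continuous piece of $\tilde{N}^{\epsilon}$ has finite variation (a continuous finite-variation process has zero quadratic covariation with any continuous semimartingale). Assumption (A.3) (independence of $B$ and $X^{\epsilon}$) ensures that $N^{\epsilon}$ remains a local martingale in the enlarged filtration carrying both sources of randomness, so the covariation is well defined. Therefore $[N^{\epsilon},\tilde{N}^{\epsilon}]\equiv 0$.

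Yor's formula for the product of two stochastic exponentials then yields
\[\eee(N^{\epsilon})_t\,\eee(\tilde{N}^{\epsilon})_t=\eee\bigl(N^{\epsilon}+\tilde{N}^{\epsilon}+[N^{\epsilon},\tilde{N}^{\epsilon}]\bigr)_t=\eee(N^{\epsilon}+\tilde{N}^{\epsilon})_t.\]
Since the sum of two local martingales is a local martingale and the stochastic exponential of a local martingale is itself a local martingale, the first claim follows.

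For the expectation bound I would exploit positivity: $\eee(N^{\epsilon})_t=\exp(N^{\epsilon}_t-\tfrac{1}{2}\langle N^{\epsilon}\rangle_t)>0$, and $\eee(\tilde{N}^{\epsilon})_t>0$ by the explicit expression (\ref{N}) together with $u\in\mathbb{U}$, which guarantees $\inf u>0$. A nonnegative local martingale is a supermartingale by Fatou's lemma, and since its initial value equals $1$, one concludes $\ee[\eee(\tilde{N}^{\epsilon})_t\eee(N^{\epsilon})_t]\leqslant 1$. The only delicate step I anticipate is the justification that $[N^{\epsilon},\tilde{N}^{\epsilon}]\equiv 0$: correctly decomposing $\tilde{N}^{\epsilon}$ into a continuous finite-variation compensator and a pure-jump martingale associated with the Markov chain is the essential input, after which the remainder of the argument is essentially formal.
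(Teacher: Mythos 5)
Your proposal is correct and follows essentially the same route as the paper: Yor's product formula for stochastic exponentials, the observation that $[N^{\epsilon},\tilde{N}^{\epsilon}]\equiv 0$ because $N^{\epsilon}$ is continuous while $\tilde{N}^{\epsilon}$ is purely discontinuous, and then positivity of the resulting local martingale together with the supermartingale property and the initial value $1$. The only addition you make is a slightly more spelled-out argument for why the covariation vanishes and a remark invoking (A.3) for the enlarged filtration; both are consistent with the paper's reasoning.
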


\begin{proof}
By Protter \cite[Thm.\ 2.38]{Protter},
\[\eee(\tilde{N}^{\epsilon})_t\eee(N^{\epsilon})_t=\eee(\tilde{N}^{\epsilon}+N^{\epsilon}+[\tilde{N}^{\epsilon}, N^{\epsilon}])_t,\]
where $[\tilde{N}^{\epsilon}, N^{\epsilon}]$ denotes the quadratic covariation process. Since $\tilde{N}_{t}^{\epsilon}$ is a pure jump
 local martingale and $N_{t}^{\epsilon}$ is a continuous local martingale,
$[\tilde{N}^{\epsilon}, N^{\epsilon}]=0$. Then $\eee(\tilde{N}^{\epsilon})_t\eee(N^{\epsilon})_t$ is the stochastic exponential of
the local martingale $\tilde{N}^{\epsilon}_t+N^{\epsilon}_t$ and a local martingale too. Since a positive local martingale is a supermartingale,
$\ee[\eee(\tilde{N}^{\epsilon})_t\eee(N^{\epsilon})_t]\leqslant \ee[\eee(\tilde{N}^{\epsilon})_0 \eee(N^{\epsilon})_0]=1.$
\end{proof}

The above lemma evidently implies that
\[\ee\left[\one_{\{\rho_T(M^{\epsilon, T}, \varphi)
+d_T(\nu^{\epsilon}, \nu)\leqslant \delta\}}\eee(\tilde{N}^{\epsilon})_T \eee(N^{\epsilon})_T\right]
\leqslant 1.\] In order to find an exponential upper bound on
$\PP(\rho_T(M^{\epsilon}, \varphi)+d_T(\nu^{\epsilon}, \nu)\leqslant \delta)$, 
we derive non-random exponential \emph{lower} bounds on $\eee(\tilde{N}^{\epsilon})_T$ and
$\eee(N^{\epsilon})_T$ in case that both $M^{\epsilon}$ is close to $\varphi$ and $\nu^{\epsilon}$ close to $\nu$
(i.e., 
on the set $\{\rho_T(M^{\epsilon}, \varphi)+d_T(\nu^{\epsilon}, \nu)\leqslant \delta\}$). The next two lemmas present the results;
Lemma \ref{nonrandom bound 1} focuses on $\eee(N^{\epsilon})_T$, whereas
Lemma \ref{nonrandom bound 2} covers $\eee(\tilde N^{\epsilon})_T$.

\begin{lemma}\label{nonrandom bound 1}
For every $(\varphi, \nu) \in \mathbb{C}_T \times \mathbb{M}_T$ and every $\lambda \in \mathbb{S}_T$, $\delta>0$,  there exists a positive constant
$K_{\lambda, \varphi, T}$ not depending on $\epsilon$ or $\delta$ such that
\[\eee(N^{\epsilon})_T \geqslant  \exp\left\{\frac{1}{\epsilon}\left[\int_{0}^{T}\lambda(s)\DD \varphi_{s}
-\int_{0}^{T}\lambda(s)\hat{b}(\nu, \varphi_{s})\DD s
-\int_{0}^{T}\frac{\lambda^2(s)}{2}\hat{\sigma}^2(\nu, \varphi_{s})\DD s\right]
-\frac{\delta}{\epsilon} K_{\lambda, \varphi, T} \right\}\]
on the set
$\{\rho_{T}(M^{\epsilon}, \varphi)+d_T(\nu^{\epsilon}, \nu)\leqslant \delta\}.$
\end{lemma}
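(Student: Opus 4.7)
The plan is to evaluate the stochastic exponential $\eee(N^{\epsilon})_T$ explicitly using the defining SDE (\ref{def}), and then compare it term by term to the target lower bound, exploiting both $\rho_T(M^\epsilon,\varphi)\leqslant\delta$ and $d_T(\nu^\epsilon,\nu)\leqslant\delta$. Substituting $\sqrt{\epsilon}\,\sigma(X_s^\epsilon,M_s^\epsilon)\DD B_s=\DD M_s^\epsilon-b(X_s^\epsilon,M_s^\epsilon)\DD s$ into the expressions for $N_T^\epsilon$ and $\langle N^\epsilon\rangle_T$ gives
\[
\epsilon\bigl[N_T^\epsilon-\tfrac{1}{2}\langle N^\epsilon\rangle_T\bigr]=\int_0^T\lambda(s)\DD M_s^\epsilon-\int_0^T\lambda(s)b(X_s^\epsilon,M_s^\epsilon)\DD s-\tfrac{1}{2}\int_0^T\lambda^2(s)\sigma^2(X_s^\epsilon,M_s^\epsilon)\DD s,
\]
while the exponent of the target bound (after multiplication by $\epsilon$) is the analogous expression with $M^\epsilon$ replaced by $\varphi$, $b(X_s^\epsilon,M_s^\epsilon)$ by $\hat b(\nu,\varphi_s)$, and $\sigma^2(X_s^\epsilon,M_s^\epsilon)$ by $\hat\sigma^2(\nu,\varphi_s)$. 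It therefore suffices to control, in absolute value by a constant multiple of $\delta$, each of the three resulting differences.

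The first difference, $\int_0^T\lambda(s)\DD(M_s^\epsilon-\varphi_s)=\sum_{j=0}^k\lambda_j[(M_{t_{j+1}}^\epsilon-\varphi_{t_{j+1}})-(M_{t_j}^\epsilon-\varphi_{t_j})]$, is bounded by $2\delta\sum_j|\lambda_j|$ directly from the definition of the Stieltjes integral against a step function and $\rho_T(M^\epsilon,\varphi)\leqslant\delta$. For the drift and variance differences, which share the same algebraic structure, I would decompose
\[
b(X_s^\epsilon,M_s^\epsilon)-\hat b(\nu,\varphi_s)=\sum_{i=1}^d[b(i,M_s^\epsilon)-b(i,\varphi_s)]\one_{\{X_s^\epsilon=i\}}+\sum_{i=1}^d b(i,\varphi_s)[\one_{\{X_s^\epsilon=i\}}-K_\nu(s,i)],
\]
and analogously for $\sigma^2-\hat\sigma^2$. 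The first sum contributes at most $K\delta\int_0^T|\lambda(s)|\DD s$ after integration, by the Lipschitz assumption (A.1); its $\sigma^2$ analogue uses local Lipschitz continuity of $\sigma^2$, which follows from (A.1)--(A.2) together with the uniform bound on $M^\epsilon$ on $[0,T]$ provided by $\rho_T(M^\epsilon,\varphi)\leqslant\delta$.

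The main obstacle is the second sum, which couples $\varphi$ to the discrepancy between the empirical occupation and the prescribed kernel. Setting $h_i(s):=\nu^\epsilon(s,i)-\nu(s,i)$, the hypothesis yields $|h_i|\leqslant\delta$ uniformly on $[0,T]$. I would split the integral into pieces on which $\lambda\equiv\lambda_j$ and integrate by parts on each, giving
\[
\int_{t_j}^{t_{j+1}}b(i,\varphi_s)\DD h_i(s)=b(i,\varphi_{t_{j+1}})h_i(t_{j+1})-b(i,\varphi_{t_j})h_i(t_j)-\int_{t_j}^{t_{j+1}}h_i(s)\DD b(i,\varphi_s),
\]
whose modulus is at most $2\delta\|b(i,\varphi_\cdot)\|_\infty+\delta K\,\mathrm{TV}(\varphi|_{[t_j,t_{j+1}]})$ by Lipschitz continuity of $b(i,\cdot)$. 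In the only regime non-vacuous for the ensuing upper bound proof, namely $\varphi\in\mathbb{H}_T$ (outside which $L_T(\varphi,\nu)=\infty$), one has $\mathrm{TV}(\varphi)\leqslant\sqrt T\,\|\varphi'\|_{L^2}<\infty$, so these estimates yield an $O(\delta)$ contribution; the analogous control for $\sigma^2$ uses in addition the sup bound on $\varphi$ to convert local into global Lipschitz continuity. Summing the three pieces gives a constant $K_{\lambda,\varphi,T}$ of the required form, expressible as a linear combination of $\sum_j|\lambda_j|$, $\int_0^T|\lambda(s)|\DD s$, $\int_0^T\lambda^2(s)\DD s$, $\|\varphi\|_\infty$ and $\|\varphi'\|_{L^2}$, and independent of $\epsilon$ and $\delta$.
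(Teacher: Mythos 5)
Your proof takes the same route as the paper's: rewrite $N_T^\epsilon-\tfrac12\langle N^\epsilon\rangle_T$ through the SDE for $M^\epsilon$, peel off the target exponent, and control each residual term on $\{\rho_T(M^\epsilon,\varphi)+d_T(\nu^\epsilon,\nu)\leqslant\delta\}$. The decomposition (the $\lambda$-Stieltjes term, the Lipschitz differences in $b$ and $\sigma^2$, and the two occupation-measure discrepancies) is identical to the paper's, and your integration by parts on each interval of constancy of $\lambda$ reproduces the bounded-variation case of the auxiliary Lemma A.1.

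The gap is the last estimate and, more importantly, your justification for it. You bound the occupation-measure term via $\|b(i,\varphi_\cdot)\|_\infty$ and $\mathrm{TV}(\varphi)$ and then argue that only $\varphi\in\mathbb{H}_T$ matters because otherwise $L_T(\varphi,\nu)=\infty$. That inverts the logic of the upper-bound proof. This lemma is precisely the tool Proposition 6.5 uses to establish the local upper bound with decay rate $-\infty$ when $\varphi\notin\mathbb{H}_T$: for each fixed $\lambda\in\mathbb{S}_T$ the lemma (together with Lemma 6.4) is fed into the supermartingale inequality $\ee[\one_{\{\cdots\}}\eee^{u^*}_T\eee(N^\epsilon)_T]\leqslant 1$ to produce a per-$\lambda$ exponential bound, and the value $-\infty$ emerges only after taking $\sup_\lambda$, via the Liptser--Pukhalskii variational identity. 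So the lemma is genuinely needed for arbitrary $\varphi\in\mathbb{C}_T$, for which $b(i,\varphi_\cdot)$ need not have bounded variation and your $\mathrm{TV}(\varphi)$ estimate is unavailable. The repair is the second half of Lemma A.1: approximate the merely continuous integrand $f(\cdot,i)$ uniformly by a continuously differentiable function, obtaining $C\delta+\gamma$ for arbitrary $\gamma>0$, and then send $\gamma\to0$ together with $\delta\to0$ in the final passage (exactly as the $\gamma$ is tracked in Lemma 6.4 and Proposition 6.5). The paper's own parenthetical that ``$\varphi_s$ is absolutely continuous'' in this spot is similarly imprecise, but its appeal to Lemma A.1 covers the general continuous case, whereas your argument does not.
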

\begin{proof}
 It is first realized that, by (\ref{def}), $N_{t}^{\epsilon}$ can be rearranged as 
\[N_{t}^{\epsilon}=\frac{1}{\epsilon}\left[\int_{0}^{t}\lambda(s)\DD M_{s}^{\epsilon}
-\int_{0}^{t}\lambda(s)b(X_{s}^{\epsilon}, M_{s}^{\epsilon})\DD s\right].\]
Then a straightforward computation yields that
\begin{eqnarray*}
\lefteqn{N_{T}^{\epsilon}-\frac{1}{2}\langle N^{\epsilon}\rangle_{T}
=\frac{1}{\epsilon}\left[\int_{0}^{T}\lambda(s)\DD M_{s}^{\epsilon}
-\int_{0}^{T}\lambda(s)b(X_{s}^{\epsilon}, M_{s}^{\epsilon})\DD s
-\int_{0}^{T}\frac{\lambda^2(s)}{2}\sigma^2(X_{s}^{\epsilon}, M_{s}^{\epsilon})\DD s\right]}\\
&=& \frac{1}{\epsilon}\left[
\int_{0}^{T}\lambda(s)\DD M_{s}^{\epsilon}-\int_{0}^{T}\lambda(s)\DD\varphi_{s}\right]
-\frac{1}{\epsilon} \left[\int_{0}^{T}\lambda(s)b(X_{s}^{\epsilon}, M_{s}^{\epsilon})\DD s
-\int_{0}^{T}\lambda(s)\hat{b}(\nu, \varphi_{s})\DD s\right]\\
& &-\frac{1}{\epsilon} \left[\int_{0}^{T}\frac{\lambda^2(s)}{2}\sigma^2(X_{s}^{\epsilon}, M_{s}^{\epsilon})\DD s
-\int_{0}^{T}\frac{\lambda^2(s)}{2}\hat{\sigma}^2(\nu, \varphi_{s})\DD s\right]\\
& &+\frac{1}{\epsilon}\left[\int_{0}^{T}\lambda(s)\DD \varphi_{s}
-\int_{0}^{T}\lambda(s)\hat{b}(\nu, \varphi_{s})\DD s
-\int_{0}^{T}\frac{\lambda^2(s)}{2}\hat{\sigma}^2(\nu, \varphi_{s})\DD s\right].
\end{eqnarray*}
As a consequence, we evidently have
\begin{eqnarray*}
\lefteqn{N_{T}^{\epsilon}-\frac{1}{2}\langle N^{\epsilon}\rangle_{T}
\geqslant \frac{1}{\epsilon}\left[\int_{0}^{T}\lambda(s)\DD \varphi_{s}
-\int_{0}^{T}\lambda(s)\hat{b}(\nu, \varphi_{s})\DD s
-\int_{0}^{T}\frac{\lambda^2(s)}{2}\hat{\sigma}^2(\nu, \varphi_{s})\DD s\right]}\\
& &
-\frac{1}{\epsilon}\left|\int_{0}^{T}\lambda(s)\DD M_{s}^{\epsilon}-\int_{0}^{T}\lambda(s)\DD\varphi_{s}\right|
 -\frac{1}{\epsilon}\left|\int_{0}^{T}\lambda(s)b(X_{s}^{\epsilon}, M_{s}^{\epsilon})\DD s
 -\int_{0}^{T}\lambda(s)\hat{b}(\nu, \varphi_{s})\DD s\right|\\
& &
 -\frac{1}{\epsilon}\left|\int_{0}^{T}\frac{\lambda^2(s)}{2}\sigma^2(X_{s}^{\epsilon}, M_{s}^{\epsilon})\DD s
-\int_{0}^{T}\frac{\lambda^2(s)}{2}\hat{\sigma}^2(\nu, \varphi_{s})\DD s\right| \:\:\:\:\: \text{a.s..}
\end{eqnarray*}
Hence, by repeated use of the triangle inequality, we find that $N_{T}^{\epsilon}-\frac{1}{2}\langle N^{\epsilon}\rangle_{T}\geqslant
\epsilon^{-1} G_T^\epsilon$ a.s., where $G_T^\epsilon$ is given by
\begin{eqnarray*}
\lefteqn{\left[\int_{0}^{T}\lambda(s)\DD \varphi_{s}
-\int_{0}^{T}\lambda(s)\hat{b}(\nu, \varphi_{s})\DD s
-\int_{0}^{T}\frac{\lambda^2(s)}{2}\hat{\sigma}^2(\nu, \varphi_{s})\DD s\right]}\\
& &-\left|\int_{0}^{T}\lambda(s)\DD M_{s}^{\epsilon}-\int_{0}^{T}\lambda(s)\DD\varphi_{s}\right|
\\
&&-\left|\int_{0}^{T}\lambda(s)b(X_{s}^{\epsilon}, M_{s}^{\epsilon})-
\lambda(s)b(X_{s}^{\epsilon}, \varphi_{s})\DD s\right|
-\left|\int_{0}^{T}\lambda(s)b(X_{s}^{\epsilon}, \varphi_{s})-\lambda(s)\hat{b}(\nu, \varphi_{s})\DD s\right|\\
&&-\left|\int_{0}^{T}\frac{\lambda^2(s)}{2}\sigma^2(X_{s}^{\epsilon}, M_{s}^{\epsilon})-\frac{\lambda^2(s)}{2}\sigma^2(X_{s}^{\epsilon}, \varphi_{s})\DD s\right|
-\left|\int_{0}^{T}\frac{\lambda^2(s)}{2}\sigma^2(X_{s}^{\epsilon}, \varphi_{s})-\frac{\lambda^2(s)}{2}\hat{\sigma}^2(\nu, \varphi_{s})\DD s\right|\hspace{-0.7mm}.
\end{eqnarray*}

In the rest of the proof, all objects are considered on the set 
$\{\rho_{T}(M^{\epsilon}, \varphi)+d_T(\nu^{\epsilon}, \nu)\leqslant \delta\}$; we analyze all absolute values in the previous display separately. 
Let us start with considering the first absolute value; we thus find that
\[\left|\int_{0}^{T}\lambda(s)\DD M_{s}^{\epsilon}-\int_{0}^{T}\lambda(s)\DD\varphi_{s}\right|
=\left|\sum_{j=1}^{k}\lambda_{j}\left(M_{T\wedge t_{j+1}}^{\epsilon}-\varphi_{T \wedge t_{j+1}}-(M_{T\wedge t_{j}}^{\epsilon}-\varphi_{T \wedge t_{j}})\right)\right|
\leqslant 2 \lambda_{T}^{*}\delta.\]
Now consider the second absolute value. The Lipschitz condition  (A.1) implies that
\[\left|\int_{0}^{T}\lambda(s)b(X_{s}^{\epsilon}, M_{s}^{\epsilon})-
\lambda(s)b(X_{s}^{\epsilon}, \varphi_{s})\DD s\right|
\leqslant \int_{0}^{T}|\lambda(s)| K |M_{s}^{\epsilon}-\varphi_{s}|\DD s
\leqslant \lambda_{T}^{*} \delta KT. \]

For the fourth one, (A.1) also entails that
\[
\left|\int_{0}^{T}\frac{\lambda^2(s)}{2}\left[\sigma^2(X_{s}^{\epsilon}, M_{s}^{\epsilon})-\sigma^2(X_{s}^{\epsilon}, \varphi_{s})\right]\DD s\right|
\leqslant \frac{\lambda^{2*}_{T}}{2}\int_{0}^{T} K \delta
|\sigma(X_{s}^{\epsilon}, M_{s}^{\epsilon})+\sigma(X_{s}^{\epsilon}, \varphi_{s})|\DD s\]
Since $\varphi$ is continuous on $[0, T]$, there exists a positive constant $r$ such that $\varphi_{T}^{*} \leqslant r-\delta$.
It yields that $M^{\epsilon *}_{T}\leqslant r$ on the set 
$\{\rho_{T}(M^{\epsilon}, \varphi)+d_T(\nu^{\epsilon}, \nu)\leqslant \delta\}$. By the linear growth condition (A.2) and the above reasoning 
\[
|\sigma(X_{s}^{\epsilon}, M_{s}^{\epsilon})+\sigma(X_{s}^{\epsilon}, \varphi_{s})|
\leqslant K(1+|M_{s}^{\epsilon}|)+K(1+|\varphi_{s}|)
\leqslant 2K(1+r).
\]
We conclude that
\[
\left|\int_{0}^{T}\frac{\lambda^2(s)}{2}\left[\sigma^2(X_{s}^{\epsilon}, M_{s}^{\epsilon})-\sigma^2(X_{s}^{\epsilon}, \varphi_{s})\right]\DD s\right|
\leqslant \lambda^{2*}_{T}  \delta K^2(1+r)T.
\]
Then, concerning the third absolute value, 
\begin{eqnarray*}
\left|\int_{0}^{T}\lambda(s)b(X_{s}^{\epsilon}, \varphi_{s})-\lambda(s)\hat{b}(\nu, \varphi_{s})\DD s\right|
&=& \left|\sum_{j=0}^{k}\int_{t_{j}}^{t_{j+1}}\lambda_{j}[b(X_{s}^{\epsilon}, \varphi_{s})-\hat{b}(\nu, \varphi_{s})]\DD s\right|\\
&\leqslant& \sum_{j=0}^{k}\left|\int_{t_{j}}^{t_{j+1}}\lambda_{j}\sum_{i=1}^{d}
b(i, \varphi_{s})[\one_{\{X^{\epsilon}_s=i\}}-K_{\nu}(s, i)]\DD s\right|\\
&=& \sum_{j=0}^{k}\left|\int_{t_{j}}^{t_{j+1}}\sum_{i=1}^{d}f_{j}(i, s)
[\one_{\{X^{\epsilon}_s=i\}}-K_{\nu}(s, i)]\DD s\right|\\
&\leqslant& \sum_{j=0}^{k}\sum_{i=1}^{d}\left|\int_{t_{j}}^{t_{j+1}}f_{j}(i, s)
[\one_{\{X^{\epsilon}_s=i\}}-K_{\nu}(s, i)]\DD s\right|
\end{eqnarray*}
where $f_{j}(i, s):=\lambda_{j}b(i, \varphi_{s})$.  Since $b(i, \cdot)$ is Lipschitz continuous and $\varphi_s$ is absolutely continuous, $f_{j}(i, s)$ is of bounded variation. Then, by Lemma \ref{integrationbyparts}, 
\[\sup_{i \in \mathbb{S}} \left|\int_{t_{j}}^{t_{j+1}}f_{j}(i, s)
[\one_{\{X^{\epsilon}_s=i\}}-K_{\nu}(s, i)]\DD s\right| \leqslant C_j\delta,\]
where $C_j$ is a constant.  We thus conclude that
\begin{equation*}
\left|\int_{0}^{T}\lambda(s)b(X_{s}^{\epsilon}, \varphi_{s})-\lambda(s)\hat{b}(\nu, \varphi_{s})\DD s\right|
\leqslant \sum_{j=0}^{k} C_j  \delta d \leqslant  C \delta.
\end{equation*}
A similar procedure yields for the last absolute value
\begin{equation*}
\left|\int_{0}^{T}\frac{\lambda^2(s)}{2}\sigma^2(X_{s}^{\epsilon}, \varphi_{s})-\frac{\lambda^2(s)}{2}\hat{\sigma}^2(\nu, \varphi_{s})\DD s\right|
\leqslant  C'  \delta.
\end{equation*}
Upon collecting these inequalities, we find
\[\eee(N^{\epsilon})_T
\geqslant \exp\left\{\frac{1}{\epsilon}\left[\int_{0}^{T}\lambda(s)\DD \varphi_{s}
-\int_{0}^{T}\lambda(s)\hat{b}(\nu, \varphi_{s})\DD s
-\int_{0}^{T}\frac{\lambda^2(s)}{2}\hat{\sigma}^2(\nu, \varphi_{s})\DD s\right]-\frac{\delta}{\epsilon}K_{\lambda, \varphi, T}\right\} ,
\]
where we denote 
\[K_{\lambda, \varphi, T}:=2 \lambda_{T}^{*}+\lambda_{T}^{*} KT
+\lambda^{2*}_{T}  K^2(1+r)T
+ C + C'  , \]
which is a positive constant not depending on $\delta$ or $\epsilon$.
\end{proof}

\begin{lemma}\label{nonrandom bound 2}
For every $ \nu \in \mathbb{M}_T$, every $u \in \mathbb{U}$ and every $\gamma, \delta>0$,  there exist positive constants
$C_u$, $C'_u$, $K_u$ and $K_{Q, u}$ not depending on $\epsilon$ or $\delta$ such that
 \[\eee(\tilde{N}^{\epsilon})_T\geqslant K_u\exp\left(-C_u\delta d-\gamma d-C'_uTd-\frac{1}{\epsilon}K_{Q, u} \delta d-\frac{1}{\epsilon}
\int_{0}^{T} \sum_{i=1}^{d} \frac{Q u(s, i)}{u(s, i)} K_{\nu}(s, i)\DD s\right)\]
on the set 
$\{\rho_{T}(M^{\epsilon}, \varphi)+d_T(\nu^{\epsilon}, \nu)\leqslant \delta\}.$
\end{lemma}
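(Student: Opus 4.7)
The plan is to take the logarithm of the explicit formula (\ref{N}) for $\eee(\tilde{N}^{\epsilon})_T$ and bound each of the resulting three summands from below on the event $\{\rho_T(M^{\epsilon}, \varphi)+d_T(\nu^{\epsilon}, \nu)\leqslant \delta\}$. Writing $Q^{\epsilon}=Q/\epsilon$ pulls out the characteristic $1/\epsilon$ factor, so that
\[
\log \eee(\tilde{N}^{\epsilon})_T = \log\frac{u(T, X^{\epsilon}_T)}{u(0, X^{\epsilon}_0)} - \int_0^T \frac{\frac{\partial}{\partial s} u(s, X^{\epsilon}_s)}{u(s, X^{\epsilon}_s)}\DD s - \frac{1}{\epsilon}\int_0^T \frac{(Qu)(s, X^{\epsilon}_s)}{u(s, X^{\epsilon}_s)}\DD s.
\]
Since $u \in \mathbb{U}$ is continuous and strictly positive on the compact set $[0, T]\times \mathbb{S}$, the prefactor is bounded below by a positive constant $K_u$, and the second summand is bounded in absolute value by $C'_u T$ with $C'_u := \sup_{s, i}|\partial_s u(s, i)|/u(s, i) < \infty$; the cruder estimate $C'_u T d$ suffices to match the statement.

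For the principal $1/\epsilon$ term I would decompose over the states of $\mathbb{S}$:
\[
\int_0^T \frac{(Qu)(s, X^{\epsilon}_s)}{u(s, X^{\epsilon}_s)}\DD s = \sum_{i=1}^d \int_0^T f(s, i)\,\one_{\{X^{\epsilon}_s = i\}}\DD s, \qquad f(s, i):=\frac{(Qu)(s, i)}{u(s, i)},
\]
and then replace each $\one_{\{X^{\epsilon}_s=i\}}$ by $K_{\nu}(s, i)$ at the cost of the error $\sum_i \int_0^T f(s, i)[\one_{\{X^{\epsilon}_s=i\}}-K_{\nu}(s, i)]\DD s$. Because $u \in \mathbb{U}$ forces $u(\cdot, i)\in C^1[0, T]$ bounded away from zero, $f(\cdot, i)\in C^1[0, T]$ is of bounded variation, so by Lemma \ref{integrationbyparts} (invoked exactly as in the proof of Lemma \ref{nonrandom bound 1}), on $\{d_T(\nu^{\epsilon}, \nu)\leqslant \delta\}$ each summand is bounded by a constant depending on $Q$ and $u$ times $\delta$, which after summing over $\mathbb{S}$ produces the $K_{Q, u}\delta d/\epsilon$ term in the statement. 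The free parameter $\gamma$ would enter through a preliminary step-function approximation of $f(\cdot, i)$ on a mesh fine enough that the per-state uniform error is at most $\gamma/T$ (so that Lemma \ref{integrationbyparts} may be applied to a piecewise-constant integrand), yielding the $\gamma d$ term; the residual continuity error between $f$ and its step approximation, combined with the closeness of $\nu^\epsilon$ to $\nu$, is absorbed into the $C_u\delta d$ correction.

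Assembling the three bounds and exponentiating produces the claimed inequality. The main technical obstacle is the careful bookkeeping to ensure that $\gamma$ appears only multiplied by $d$ and \emph{not} by $d/\epsilon$: any approximation error introduced at the level of the Markov-chain integrand $f$ is amplified by $1/\epsilon$, so one must cleanly separate the genuinely $1/\epsilon$-sensitive step (the integration-by-parts bound $K_{Q, u}\delta$, which exploits only the bounded-variation property of the \emph{unapproximated} $f$) from the approximation residual $\gamma$, which survives only outside $1/\epsilon$.
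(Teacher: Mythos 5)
Your overall decomposition is the same as the paper's: take the logarithm of the explicit formula~(\ref{N}) for $\eee(\tilde{N}^{\epsilon})_T$, bound the prefactor $u(T,X^{\epsilon}_T)/u(0,X^{\epsilon}_0)$ below by the positive constant $K_u$, control the $\partial_s u/u$ integral, and for the $\epsilon^{-1}$ integral decompose over states of $\mathbb{S}$, replace $\one_{\{X^{\epsilon}_s=i\}}$ by $K_{\nu}(s,i)$, and invoke Lemma~\ref{integrationbyparts}. The identification that $f(\cdot,i)=(Qu)(\cdot,i)/u(\cdot,i)$ is $C^1$ (hence of bounded variation), so that the bounded-variation branch of Lemma~\ref{integrationbyparts} yields $K_{Q,u}\delta$ per state with no $\gamma$ residual, is exactly what the paper uses. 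So the core of your proposal is sound.

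Where you go astray is in accounting for the terms $-C_u\delta d-\gamma d$. Two observations. First, they are simply not needed in your version of the argument: your crude bound $\bigl|\int_0^T \partial_s u(s,X^{\epsilon}_s)/u(s,X^{\epsilon}_s)\,\DD s\bigr|\leqslant C'_uT$ already gives
\[
\eee(\tilde{N}^{\epsilon})_T\geqslant K_u\exp\!\left(-C'_uT-\tfrac{1}{\epsilon}K_{Q,u}\delta d-\tfrac{1}{\epsilon}\int_0^T\sum_i \tfrac{Qu(s,i)}{u(s,i)}K_{\nu}(s,i)\,\DD s\right),
\]
which is \emph{stronger} than the stated inequality (adding the nonnegative terms $C_u\delta d+\gamma d+C'_u T(d-1)$ in the exponent only decreases the right-hand side), so the claim follows a fortiori. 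Second, the mechanism you propose for producing $\gamma d$ — approximating $f$ by a step function inside the $\epsilon^{-1}$ integral — does not work, and you half-notice this yourself: any sup-norm error $\gamma/T$ in that integrand survives as $\gamma d/\epsilon$, not $\gamma d$. Your closing remark that one ``must cleanly separate'' the step-function residual ``which survives only outside $1/\epsilon$'' is not a proof; there is no way to relocate an approximation error made inside the $\epsilon^{-1}$ integral to outside it. In the paper, $\gamma$ does not come from the $\epsilon^{-1}$ term at all. It arises because the paper also splits the $\partial_s u/u$ integral into $\sum_i\int_0^T\frac{\partial_s u(s,i)}{u(s,i)}[\one_{\{X^{\epsilon}_s=i\}}-K_{\nu}(s,i)]\DD s+\sum_i\int_0^T\frac{\partial_s u(s,i)}{u(s,i)}K_{\nu}(s,i)\DD s$, and applies Lemma~\ref{integrationbyparts} to the first piece; here $\partial_s u/u$ is guaranteed only continuous (not of bounded variation, since $u$ is merely $C^1$), so the continuous branch of Lemma~\ref{integrationbyparts} yields $C_u\delta+\gamma$ per state. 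Because this piece is \emph{outside} the $\epsilon^{-1}$ factor, the $\gamma$ appears harmlessly. If you keep your simpler bound of the $\partial_s u/u$ term, just state that the extra nonnegative terms $C_u\delta d+\gamma d$ can be inserted for free; do not attempt to derive them from the $\epsilon^{-1}$ integral.
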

\begin{proof} First observe that
\begin{eqnarray*}
\eee(\tilde{N}^{\epsilon})_T&=&
\frac{u(T, X^{\epsilon}_{T})}{u(0, X^{\epsilon}_{0})}\exp\left(-\int_{0}^{T}\sum_{i=1}^{d}\frac{\frac{\partial}{\partial s} u(s, i)+(Q^{\epsilon}u)
(s, i)}{u(s, i)} \one_{\{X_s^{\epsilon}=i\}}\DD s\right)\\
&=&
\frac{u(T, X^{\epsilon}_{T})}{u(0, X^{\epsilon}_{0})}\exp\left(-\sum_{i=1}^{d}
\int_{0}^{T}\frac{\frac{\partial}{\partial s} u(s, i)}{u(s, i)} [\one_{\{X_{s}^{\epsilon}=i\}}-K_{\nu}(s, i)]\DD s\right.\\
&&\hspace{4cm}\left.-\sum_{i=1}^{d}
\int_{0}^{T}\frac{\frac{\partial}{\partial s} u(s, i)}{u(s, i)} K_{\nu}(s, i)\DD s\right) \times\\
&& \,\ \exp\left(-\frac{1}{\epsilon}\sum_{i=1}^{d}
\int_{0}^{T}\frac{Q u(s, i)}{u(s, i)} [\one_{\{X_{s}^{\epsilon}=i\}}-K_{\nu}(s, i)]\DD s-\frac{1}{\epsilon}\sum_{i=1}^{d}
\int_{0}^{T}\frac{Q u(s, i)}{u(s, i)} K_{\nu}(s, i)\DD s\right)\end{eqnarray*}
By the definition of $u$ and $X_0^{\epsilon}=x$, we have that $K_u:=\min_{i, x} {u(T, i)}/{u(0, x)}$ is a positive constant.
Hence $\eee(\tilde{N}^{\epsilon})_T$ majorizes
\begin{eqnarray*}
\lefteqn{
K_u\exp\left(-\sum_{i=1}^{d}
\left|\int_{0}^{T}\frac{\frac{\partial}{\partial s} u(s, i)}{u(s, i)} [\one_{\{X_{s}^{\epsilon}=i\}}-K_{\nu}(s, i)]\DD s\right|-\sum_{i=1}^{d}
\left|\int_{0}^{T}\frac{\frac{\partial}{\partial s} u(s, i)}{u(s, i)} K_{\nu}(s, i)\DD s\right|\right) \times}\\
&& \,\ \exp\left(-\frac{1}{\epsilon}\sum_{i=1}^{d}
\left|\int_{0}^{T}\frac{Q u(s, i)}{u(s, i)} [\one_{\{X_{s}^{\epsilon}=i\}}-K_{\nu}(s, i)]\DD s \right|-\frac{1}{\epsilon}\sum_{i=1}^{d}
\int_{0}^{T}\frac{Q u(s, i)}{u(s, i)} K_{\nu}(s, i)\DD s\right)
\end{eqnarray*}
On the set 
$\{\rho_{T}(M^{\epsilon}, \varphi)+d_T(\nu^{\epsilon}, \nu)\leqslant \delta\}$, Lemma \ref{integrationbyparts} implies that, for any $\gamma>0$, $i\in \mathbb{S}$,
\[\left|\int_{0}^{T}\frac{\frac{\partial}{\partial s} u(s, i)}{u(s, i)} [\one_{\{X_{s}^{\epsilon}=i\}}-K_{\nu}(s, i)]\DD s\right| \leqslant C_u\delta + \gamma, \]
\[\left|\int_{0}^{T}\frac{Q u(s, i)}{u(s, i)} [\one_{\{X_{s}^{\epsilon}=i\}}-K_{\nu}(s, i)]\DD s \right|\leqslant K_{Q, u}\delta, \:\:\: \forall i \in \mathbb{S}.\] 
Since $\frac{\partial}{\partial s} u(s, i)/u(s, i)$ is continuous on $[0, T]$,
\[\left|\int_{0}^{T}\frac{\frac{\partial}{\partial s} u(s, i)}{u(s, i)} K_{\nu}(s, i)\DD s\right|\leqslant C'_{u}T. \]
Hence,
\[
\eee(\tilde{N}^{\epsilon})_T\geqslant
K_u\exp\left(-C_u\delta d-\gamma d-C'_uTd-\frac{1}{\epsilon}K_{Q, u} \delta d-\frac{1}{\epsilon}
\int_{0}^{T}\sum_{i=1}^{d}\frac{Q u(s, i)}{u(s, i)} K_{\nu}(s, i)\DD s\right).
\] We have thus proven our claim.
\end{proof}

Now we are ready to prove the upper bound in the local LDP.

\begin{proposition} \label{upp}
For every $(\varphi, \nu) \in \mathbb{C}_T \times \mathbb{M}_T$,
\[\limsup_{\delta\rightarrow 0}\limsup_{\epsilon \rightarrow 0} \epsilon \log \PP
(\rho_T(M^{\epsilon}, \varphi)+d_T(\nu^{\epsilon}, \nu)\leqslant \delta)\leqslant -L_{T}(\varphi, \nu).\]
\end{proposition}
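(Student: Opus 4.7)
The plan is to derive an exponential upper bound on $\pp(A_\delta^\epsilon)$, where $A_\delta^\epsilon:=\{\rho_T(M^\epsilon,\varphi)+d_T(\nu^\epsilon,\nu)\leqslant \delta\}$, by combining the supermartingale identity from Lemma~\ref{supermartingale} with the two deterministic lower bounds in Lemmas~\ref{nonrandom bound 1} and~\ref{nonrandom bound 2}. Since $\ee[\mathbf{1}_{A_\delta^\epsilon}\eee(\tilde N^\epsilon)_T\eee(N^\epsilon)_T]\leqslant 1$, factoring those non-random lower bounds out of the expectation and taking $\epsilon\log$ gives, for every $\lambda\in\mathbb{S}_T$ and $u\in\mathbb{U}$,
\[
\epsilon\log\pp(A_\delta^\epsilon) \leqslant -\Phi_T(\lambda,u;\varphi,\nu) + \delta\,K_{\lambda,u,\varphi,T} + \epsilon\,C_{u,T},
\]
where $\Phi_T(\lambda,u;\varphi,\nu)$ is the sum $\int_0^T\lambda\,\dd\varphi_s - \int_0^T\lambda\,\hat b(\nu,\varphi_s)\,\dd s - \tfrac12\int_0^T\lambda^2\,\hat\sigma^2(\nu,\varphi_s)\,\dd s - \int_0^T\sum_{i=1}^d\frac{(Qu)(s,i)}{u(s,i)}\,K_\nu(s,i)\,\dd s$.

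Next I would take $\limsup_{\epsilon\to 0}$ followed by $\limsup_{\delta\to 0}$, so that the two error terms vanish, and then optimize over $\lambda$ and $u$. For the $\lambda$-supremum over step functions, density of $\mathbb{S}_T$ in $L^2[0,T]$ together with a completion-of-the-square computation recovers the diffusion rate: when $\varphi\in\mathbb{H}_T$ it realizes $I_T(\varphi,\nu)=\tfrac{1}{2}\int_0^T[\varphi'_s-\hat b(\nu,\varphi_s)]^2/\hat\sigma^2(\nu,\varphi_s)\,\dd s$ (using the conventions $0/0=0$, $n/0=\infty$), and when $\varphi\notin\mathbb{H}_T$ it is $+\infty$ and the claimed inequality is trivial.

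The delicate step, which I expect to be the main obstacle, is the $u$-optimization: $\tilde I_T(\nu)$ is defined as an integral of a \emph{pointwise in $s$} supremum over $U\subset\rr^d$, whereas our approach only yields the supremum over a single time-dependent function $u\in\mathbb{U}$. To bridge this gap I would first prove the upper bound only on the dense subset $\mathbb{C}_T\times\mathbb{M}_T^{++}$ (density is guaranteed by Lemma~\ref{dense}): on $\mathbb{M}_T^{++}$, Lemma~\ref{optimizer} supplies a pointwise minimizer $u^\star$ that is smooth in $s$ and belongs to $\mathbb{U}$, so substituting $u=u^\star$ into the estimate realizes exactly $-\tilde I_T(\nu)$. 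Combined with the $\lambda$-supremum analysis this establishes $\limsup_\delta\limsup_\epsilon\epsilon\log\pp(A_\delta^\epsilon)\leqslant -L_T(\varphi,\nu)$ for every $(\varphi,\nu)\in\mathbb{C}_T\times\mathbb{M}_T^{++}$.

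Finally, I would extend the local upper bound from this dense subset to all of $\mathbb{C}_T\times\mathbb{M}_T$ by invoking Lemma~\ref{mogu}(i), which requires only that $L_T=I_T+\tilde I_T$ be lower semi-continuous on the ambient space. Lower semi-continuity of $\tilde I_T$ follows from Lemma~\ref{continuity}(i) together with $\tilde I_T$ being an integral of a supremum of functionals continuous in $K_\nu$; lower semi-continuity of $I_T$ in $(\varphi,\nu)$ follows by the same Legendre-type representation used in the $\lambda$-supremum. The pointwise-vs-global mismatch in the $u$-optimization is really the only structural hurdle, and Lemmas~\ref{dense} and~\ref{optimizer} are precisely the tools that clear it.
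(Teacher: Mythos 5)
Your proposal matches the paper's proof step for step: the same factorization $\ee[\mathbf{1}_{A_\delta^\epsilon}\eee(\tilde N^\epsilon)_T\eee(N^\epsilon)_T]\leqslant 1$ via Lemma~\ref{supermartingale} combined with the non-random lower bounds of Lemmas~\ref{nonrandom bound 1}--\ref{nonrandom bound 2}, the same use of the dense subset $\mathbb{M}_T^{++}$ together with Lemma~\ref{optimizer} to turn the single-$u$ exponent into $-\tilde I_T(\nu)$, and the same extension to $\mathbb{C}_T\times\mathbb{M}_T$ via Lemma~\ref{mogu}(i) and lower semicontinuity of $L_T$. The only cosmetic difference is that where you invoke density of $\mathbb{S}_T$ in $L^2$ plus completing the square, the paper cites Lemma~6.1 of Liptser--Pukhalskii to pass from the supremum over step functions to the integral of the pointwise supremum; these are the same argument.
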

\begin{proof}
Due to Lemma \ref{dense}, $\mathbb{C}_T \times \mathbb{M}^{++}_T$ is dense in $\mathbb{C}_T \times \mathbb{M}_T$. We first prove that the upper bound holds on $\mathbb{C}_T \times \mathbb{M}^{++}_T$. For every $\nu \in \mathbb{M}^{++}_T$, it is an immediate implication of Lemma \ref{optimizer} that there is an optimizer $u^*(\cdot,  \cdot)$ of \[\inf_{u \in U}\left[\sum_{i=1}^{d}\frac{(Qu)(i)}{u(i)}K_{\nu}(s, i)\right]\] such that $u^* \in \mathbb{U}$.  We denote 
\[\eee^{u^*}_t=\frac{u^*(t, X^{\epsilon}_{t})}{u^*(0, X^{\epsilon}_{0})}\exp\left(-\int_{0}^{t}\frac{\frac{\partial}{\partial s} u^*(s, X^{\epsilon}_s)+(Q^{\epsilon}u^*)
(s, X^{\epsilon}_{s})}{u^*(s, X_{s}^{\epsilon})}\DD s\right).\]
Lemma \ref{supermartingale} implies that 
\begin{equation}\label{ub}
\ee\left[\one_{\{\rho_T(M^{\epsilon}, \varphi)
+d_T(\nu^{\epsilon}, \nu)\leqslant \delta\}}\eee^{u^*}_T \eee(N^{\epsilon})_T\right]
\leqslant 1
\end{equation}
for every $\lambda \in \mathbb{S}_T.$
By virtue of Lemmas \ref{nonrandom bound 1} and \ref{nonrandom bound 2}, we have a non-random lower bound for 
$\eee^{u^*}_T \eee(N^{\epsilon})_T$ on the set 
$\{\rho_{T}(M^{\epsilon}, \varphi)+d_T(\nu^{\epsilon}, \nu)\leqslant \delta\}$.  Hence, (\ref{ub}) implies that, for
all $\lambda \in \mathbb{S}_T$,
\begin{eqnarray*}
 \lefteqn{\PP(\rho_T(M^{\epsilon}, \varphi)+d_T(\nu^{\epsilon}, \nu)\leqslant \delta)}\\
&\leqslant& \frac{1}{K_{u^*}} \exp\left(C_{u^*}\delta d+C'_{u^*}Td+\frac{1}{\epsilon}K_{Q, u^*}\delta d+\frac{1}{\epsilon}
\int_{0}^{T} \sum_{i=1}^{d} \frac{Q u^*(s, i)}{u^*(s, i)} K_{\nu}(s, i)\DD s\right) \times\\
& &\exp\left\{-\frac{1}{\epsilon}\left[\int_{0}^{T}\lambda(s)\DD \varphi_{s}
-\int_{0}^{T}\lambda(s)\hat{b}( \nu, \varphi_{s})\DD s
-\int_{0}^{T}\frac{\lambda^2(s)}{2}\hat{\sigma}^2( \nu, \varphi_{s})\DD s\right]+\frac{\delta}{\epsilon} K_{\lambda, \varphi, T}\right\}.
\end{eqnarray*}
We observe that 
\[ \int_{0}^{T} \sum_{i=1}^{d} \frac{Q u^*(s, i)}{u^*(s, i)} K_{\nu}(s, i)\DD s=-\int_0^T
 \sup_{u \in U}\left[-\sum_{i=1}^{d}\frac{(Qu)(i)}{u(i)}K_{\nu}(s, i)\right] \DD s=-\tilde{I}_{T}(\nu).\]
It directly entails that, again for all $\lambda \in \mathbb{S}_T$,
\begin{eqnarray} \lefteqn{
\epsilon \log \PP(\rho_T(M^{\epsilon}, \varphi)+d_T(\nu^{\epsilon}, \nu)\leqslant \delta)}\nonumber\\
&\leqslant& -\left[\int_{0}^{T}\lambda(s)\DD \varphi_{s}
-\int_{0}^{T}\lambda(s)\hat{b}( \nu, \varphi_{s})\DD s
-\int_{0}^{T}\frac{\lambda^2(s)}{2}\hat{\sigma}^2( \nu, \varphi_{s})\DD s\right]+K_{\lambda, \varphi, T}\delta\nonumber \\
& & 
-\epsilon \log K_{u^*}+\epsilon\left(C_{u^*}\delta d+C'_{u^*}Td\right)+K_{Q, u^*}\delta d-\tilde{I}_{T}(\nu).\label{re1}
\end{eqnarray}

It is easily seen that all the terms with $\delta$ or $\epsilon$ vanish as $\delta \rightarrow 0$, $\epsilon \rightarrow 0$. As a consequence we conclude, by minimizing the right hand-side over $\lambda$, that the decay rate
\[\limsup_{\delta\rightarrow 0}\limsup_{\epsilon \rightarrow 0} \epsilon \log \PP
(\rho_T(M^{\epsilon}, \varphi)+d_T(\nu^{\epsilon}, \nu)\leqslant \delta)\] is majorized by
\[
- \sup_{\lambda \in \mathbb{S}_T}\left[\int_{0}^{T}\lambda(s)\DD \varphi_{s}
-\int_{0}^{T}\lambda(s)\hat{b}( \nu, \varphi_{s})\DD s
-\int_{0}^{T}\frac{\lambda^2(s)}{2}\hat{\sigma}^2( \nu, \varphi_{s})\DD s\right]-\tilde{I}_{T}(\nu).
\]
Since $b(i, x)$ and $\sigma(i, x)$ satisfy the linear growth condition (A.2), $\hat{b}(\nu, x)$ and $\hat{\sigma}(\nu, x)$ are of linear growth as well.
Then Liptser and Pukhalskii \cite[Lemma 6.1]{Liptser} implies that
\begin{eqnarray*}
\lefteqn{\sup_{\lambda \in \mathbb{S}_T}\left[\int_{0}^{T}\lambda(s)\DD \varphi_{s}
-\int_{0}^{T}\lambda(s)\hat{b}(\nu, \varphi_{s})\DD s
-\int_{0}^{T}\frac{\lambda^2(s)}{2}\hat{\sigma}^2(\nu, \varphi_{s})\DD s\right]}\\
&=&
\left\{ {\displaystyle 
  \begin{array}{l l}{\displaystyle 
\int_{0}^{T}\sup_{\lambda \in \rr}\left[\lambda \varphi'_{s}
-\lambda\hat{b}(\nu, \varphi_{s})
-\frac{\lambda^2}{2}\hat{\sigma}^2(\nu, \varphi_{s})\right]\DD s }& \quad 
 \text{if } \varphi \in \mathbb{H}_{T},\\
    \infty & \quad \text{otherwise.}
  \end{array} }\right.
\end{eqnarray*}

For $s \in [0, T]$ such that $\hat{\sigma}^2(\nu, \varphi_{s}) \neq 0$ and $\varphi \in \mathbb{H}_T$,
it is well-known (cf. Fredlin and Wentzell \cite{Freidlin}, Liptser \cite{Liptser1}) that
\[\sup_{\lambda \in \rr}\left[\lambda \varphi'_{s}-\lambda\hat{b}(\nu, \varphi_{s})
-\frac{\lambda^2}{2}\hat{\sigma}^2(\nu, \varphi_{s})\right]= 
\frac{[\varphi'_{s}-\hat{b}(\nu, \varphi_{s})]^2}{2\hat{\sigma}^2(\nu, \varphi_{s})}.\]
For $s \in [0, T]$ such that $\hat{\sigma}^2(\nu, \varphi_{s}) = 0$ and $\varphi \in \mathbb{H}_T$,
\begin{equation*}
\sup_{\lambda \in \rr}\left[\lambda \varphi'_{s}-\lambda\hat{b}(\nu, \varphi_{s})
-\frac{\lambda^2}{2}\hat{\sigma}^2(\nu, \varphi_{s})\right] = \left\{
  \begin{array}{l l}
 0 & \quad 
 \text{if } \varphi'_s=\hat{b}(\nu, \varphi_s),\\
    \infty & \quad \text{otherwise.}
  \end{array} \right.
\end{equation*}
Hence, with the conventions $0/0 = 0$ and $n/0 = \infty$ (for all $n > 0$) being in force, 
\[\int_{0}^{T}\sup_{\lambda \in \rr}\left[\lambda \varphi'_{s}
-\lambda\hat{b}(\nu, \varphi_{s})
-\frac{\lambda^2}{2}\hat{\sigma}^2(\nu, \varphi_{s})\right]\DD s=
\frac{1}{2} \int_{0}^{T} \frac{[\varphi'_{t}-\hat{b}(\nu, \varphi_{t})]^2}{\hat{\sigma}^2(\nu, \varphi_{t})}\DD t\] 
if $\varphi \in \mathbb{H}_T$.

Hence the lower bound for the dense subset $\mathbb{C}_T \times \mathbb{M}^{++}_T$ is established.  In consideration of Lemma \ref{mogu}, the upper bound is proved for $ \mathbb{C}_T \times \mathbb{M}_T$ if we can show $I_{T}(\varphi, \nu)$ and $\tilde{I}_{T}(\nu)$ are lower semi-continuous on $\nu$.  We denote
\[F_{\lambda}(\varphi, \nu)=\int_{0}^{T}\lambda(s)\DD \varphi_{s}
-\int_{0}^{T}\lambda(s)\hat{b}( \nu, \varphi_{s})\DD s
-\int_{0}^{T}\frac{\lambda^2(s)}{2}\hat{\sigma}^2( \nu, \varphi_{s})\DD s.\]
By the above computation, we know for every $(\varphi, \nu) \in \mathbb{C}_T \times \mathbb{M}_T$, 
$I_T(\varphi, \nu)=\sup_{\lambda \in \mathbb{S}_T}F_{\lambda}(\varphi, \nu)$. For every $\lambda \in \mathbb{S}_T$, $F_{\lambda}(\nu, \varphi)$ is continuous on $\nu$ due to Lemma \ref{integrationbyparts}. Then $I_T(\varphi, \nu)$ is lower semi-continuous on $\nu$ since it is the pointwise supremum of continuous functions. By Lemma \ref{continuity}, $\tilde{I}_{T}(\nu)$ also satisfies the requirement.  The claim is established.
\end{proof}

\section{Lower bound for the local LDP}
This section studies the lower bound of the local LDP. To this end, it is realized that only finite rate functions need to be investigated.  The rate function $\tilde{I}_T(\nu)$ is finite for every $\nu \in \mathbb{M}_T$ since
$0 \leqslant \tilde{I}_T(\nu) \leqslant -T\sum_{i=1}^{d}Q_{ii}$. We further observe that the rate function $I_T(\varphi, \nu)$ is finite for every $(\varphi, \nu) \in \mathbb{H}_T \times \mathbb{M}_T$
if $\inf_{i, x}\sigma^2(i , x) > 0$. Hence we consider the case of $\inf_{i, x}\sigma^2(i , x) > 0$ first. Let $(\varphi, \nu) \in \mathbb{H}_T \times \mathbb{M}_T$. We define
\begin{equation} \label{Girs}
\bar{N}_{t}^{\epsilon}:=\frac{1}{\sqrt{\epsilon}} 
\int_{0}^{t} \frac{\varphi'_{s}-\hat{b}(\varphi_{s},\nu)}{\hat{\sigma}(\varphi_{s}, \nu)} \DD B_s .
\end{equation}
Then its stochastic exponential is
\[\eee(\bar{N}^{\epsilon})_{t}=\exp\left(\bar{N}_{t}^{\epsilon}-\frac{1}{2}\langle \bar{N}^{\epsilon}\rangle_{t}\right),\:\:\:\:\:
\mbox{where}  \:\:\:\:\:
\langle \bar{N}^{\epsilon}\rangle_{t}=\frac{1}{\epsilon} \int_{0}^{t} \left[\frac{\varphi'_{s}-\hat{b}(\varphi_{s},\nu)}{\hat{\sigma}(\varphi_{s}, \nu)}\right]^2 \DD s .\]
For simplicity, we denote 
\[h_s:=\frac{\varphi'_{s}-\hat{b}(\varphi_{s},\nu)}{\hat{\sigma}(\varphi_{s}, \nu)}\]
throughout this section. Recall from (\ref{N}) that, for a given $u(\cdot, \cdot) \in \mathbb{U}$,
\[\eee(\tilde{N}^{\epsilon})_t=\frac{u(t, X^{\epsilon}_{t})}{u(0, X^{\epsilon}_{0})}\exp\left(-\int_{0}^{t}\frac{\frac{\partial}{\partial s} u(s, X^{\epsilon}_s)+(Q^{\epsilon}u)
(s, X^{\epsilon}_{s})}{u(s, X_{s}^{\epsilon})}\DD s\right).\]

In order to perform a change of measure in Proposition \ref{non}, we show that $\{\eee(\tilde{N}^{\epsilon})_t \eee(\bar{N}^{\epsilon})_t\}_{t \in [0, T]}$ is a true martingale.
It is noted that in the first results of this section, we impose  the condition $\inf_{i, x}\sigma^2(i , x) > 0$, which will be lifted later on.
\begin{lemma}\label{martingale}
For every $(\varphi, \nu) \in \mathbb{H}_T \times \mathbb{M}_T$ and $u(\cdot, \cdot) \in \mathbb{U}$, $\{\eee(\tilde{N}^{\epsilon})_t \eee(\bar{N}^{\epsilon})_t\}_{t \in [0, T]}$ is a martingale if \,$\inf_{i, x}\sigma^2(i , x) > 0$.
\end{lemma}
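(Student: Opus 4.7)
The plan is to mirror the argument of Lemma \ref{supermartingale} but upgrade from local martingale/supermartingale to \emph{true} martingale using the independence assumption (A.3). As in that lemma, $\bar{N}^\epsilon$ is a continuous local martingale (stochastic integral against $B$) while $\tilde{N}^\epsilon$ is a pure-jump local martingale (its paths are constant between the jumps of $X^\epsilon$). Consequently the quadratic covariation $[\tilde{N}^\epsilon,\bar{N}^\epsilon]$ vanishes identically, and Yor's formula (Protter \cite[Thm.\ 2.38]{Protter}) identifies
\[
\eee(\tilde{N}^\epsilon)_t\,\eee(\bar{N}^\epsilon)_t \;=\; \eee(\tilde{N}^\epsilon+\bar{N}^\epsilon)_t,
\]
which is a strictly positive local martingale on $[0,T]$ (hence a supermartingale).

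Next I would check that each factor is individually a true martingale. For $\eee(\bar{N}^\epsilon)$, the integrand $h_s$ is \emph{deterministic}: since $\varphi\in\mathbb{H}_T$ is continuous and thus bounded on $[0,T]$, the linear growth assumption (A.2) gives a uniform bound on $\hat{b}(\nu,\varphi_s)$; the hypothesis $\inf_{i,x}\sigma^2(i,x)>0$ gives a uniform strictly positive lower bound on $\hat{\sigma}(\nu,\varphi_s)$; and $\varphi'\in L^2([0,T])$. Hence $\langle\bar{N}^\epsilon\rangle_T=\tfrac{1}{\epsilon}\int_0^T h_s^2\DD s$ is a finite deterministic constant, so Novikov's condition is trivially satisfied and $\eee(\bar{N}^\epsilon)$ is a martingale. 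For $\eee(\tilde{N}^\epsilon)$, the defining $u\in\mathbb{U}$ is continuous and bounded away from $0$, $\partial u/\partial s$ is continuous on the compact set $[0,T]\times\mathbb{S}$ hence bounded, and $|(Q^\epsilon u)(s,i)/u(s,i)|\leqslant C/\epsilon$; therefore $\eee(\tilde{N}^\epsilon)$ is bounded above by a (possibly $\epsilon$-dependent) constant, so this positive local martingale is a true martingale.

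Finally I would invoke independence. By (A.3), the $\sigma$-algebras $\sigma(B_s:s\leqslant T)$ and $\sigma(X^\epsilon_s:s\leqslant T)$ are independent. Since $\eee(\bar{N}^\epsilon)_T$ is measurable with respect to the former and $\eee(\tilde{N}^\epsilon)_T$ with respect to the latter, the product factors in expectation:
\[
\mathbb{E}\!\left[\eee(\tilde{N}^\epsilon)_T\,\eee(\bar{N}^\epsilon)_T\right]
=\mathbb{E}\!\left[\eee(\tilde{N}^\epsilon)_T\right]\mathbb{E}\!\left[\eee(\bar{N}^\epsilon)_T\right]=1\cdot 1=1.
\]
A positive supermartingale whose expectation at the terminal time $T$ equals its expectation at time $0$ is necessarily a martingale on $[0,T]$, which concludes the proof. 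The only delicate point is the verification that $h_s$ is deterministic and square integrable (so that the non-degeneracy assumption $\inf_{i,x}\sigma^2(i,x)>0$ is crucial); everything else reduces to standard facts about stochastic exponentials and the independence structure built into the model.
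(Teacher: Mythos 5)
Your proof is correct and follows essentially the same route as the paper: establish that the product is a supermartingale via Yor's formula and the vanishing covariation, show each factor is individually a true martingale (Novikov for $\eee(\bar{N}^\epsilon)$ using $\varphi\in\mathbb{H}_T$ and the non-degeneracy hypothesis, boundedness for $\eee(\tilde{N}^\epsilon)$), use (A.3) to factor the terminal expectation, and conclude from the fact that a supermartingale with constant expectation is a martingale. The extra detail you give on why $\eee(\tilde{N}^\epsilon)$ is bounded merely spells out what the paper had already established in the preceding section.
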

\begin{proof}
We have shown in last section that $\eee(\tilde{N}^{\epsilon})_t$ is a martingale.
Since $\varphi\in \mathbb{H}_T$ and recalling that we assumed $\inf_{i, x}\sigma^2(i , x) > 0$,
it follows that $ \langle\bar{N}^{\epsilon}\rangle_{T}=\frac{1}{\epsilon} \int_{0}^{T} h_s^2 \DD s < \infty$.
Then Novikov's condition implies that $\eee(\bar{N}^{\epsilon})_t$ is also a martingale.
Since $X_{t}^{\epsilon}$ is independent of the Brownian motion $B_t$, $\eee(\tilde{N}^{\epsilon})_t$ is also
independent of $\eee(\bar{N}^{\epsilon})_t$. So, 
\[\ee[\eee(\tilde{N}^{\epsilon})_T \eee(\bar{N}^{\epsilon})_T]=\ee[\eee(\tilde{N}^{\epsilon})_T] \ee[ \eee(\bar{N}^{\epsilon})_T]
=\ee[\eee(\tilde{N}^{\epsilon})_0] \ee[ \eee(\bar{N}^{\epsilon})_0]=\ee[\eee(\tilde{N}^{\epsilon})_0 \eee(\bar{N}^{\epsilon})_0]. \]
By the same reasoning as in the proof of Lemma \ref{supermartingale}, we know that $\eee(\tilde{N}^{\epsilon})_t \eee(\bar{N}^{\epsilon})_t$ is a 
supermartingale. Hence, it is a martingale by Liptser and Shiryaev \cite[Lemma 6.4]{Shiryaev}.
\end{proof}

\begin{lemma}\label{nonrandom bound 3}
For every $ \nu \in \mathbb{M}_T$, every $u \in \mathbb{U}$ and every $\gamma, \delta>0$,  there exist positive constants
$C_u$, $C'_u$, $K'_u$ and $K_{Q, u}$ not depending on $\epsilon$ or $\delta$ such that
 \[[\eee(\tilde{N}^{\epsilon})_T]^{-1}\geqslant K'_u\exp\left(-C_u\delta d-\gamma d-C'_uTd-\frac{1}{\epsilon}K_{Q, u} \delta d+\frac{1}{\epsilon}
\int_{0}^{T} \sum_{i=1}^{d} \frac{Q u(s, i)}{u(s, i)} K_{\nu}(s, i)\DD s\right)\]
on the set 
$\{\rho_{T}(M^{\epsilon}, \varphi)+d_T(\nu^{\epsilon}, \nu)\leqslant \delta\}.$
\end{lemma}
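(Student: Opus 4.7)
The plan is to mirror the proof of Lemma \ref{nonrandom bound 2}, applied to the reciprocal of $\eee(\tilde N^\epsilon)_T$. Starting from the explicit representation in (\ref{N}), inverting gives
\[[\eee(\tilde{N}^{\epsilon})_T]^{-1} = \frac{u(0, X^{\epsilon}_{0})}{u(T, X^{\epsilon}_{T})}\exp\left(\int_{0}^{T}\frac{\frac{\partial}{\partial s} u(s, X^{\epsilon}_s)+(Q^{\epsilon}u)(s, X^{\epsilon}_{s})}{u(s, X_{s}^{\epsilon})}\DD s\right),\]
so that the exponent differs from the one used in the proof of Lemma \ref{nonrandom bound 2} only by a sign.

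First I would split the exponent by writing $Q^{\epsilon} = Q/\epsilon$ and, for each $i \in \mathbb{S}$, decomposing $\one_{\{X_s^{\epsilon}=i\}} = [\one_{\{X_s^{\epsilon}=i\}} - K_{\nu}(s,i)] + K_{\nu}(s,i)$. This produces four pieces: two \emph{fluctuation} integrals (one with $(\partial_s u)/u$ and one with $(Qu)/u$, each tested against $\one_{\{X_s^{\epsilon}=i\}} - K_{\nu}(s,i)$) and two \emph{deterministic} integrals against $K_{\nu}$. On the event $\{\rho_{T}(M^{\epsilon}, \varphi)+d_T(\nu^{\epsilon}, \nu)\leqslant \delta\}$ the fluctuation integrals are controlled in absolute value by Lemma \ref{integrationbyparts}, exactly as in the proof of Lemma \ref{nonrandom bound 2}, yielding per-component bounds $C_u \delta + \gamma$ (for the $\partial_s u/u$ piece) and $K_{Q,u}\delta$ (for the $Qu/u$ piece, which carries an additional $1/\epsilon$). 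The deterministic integral against $(\partial_s u)/u$ is bounded in absolute value by $C'_u T$ per component, since $(\partial_s u)/u$ is continuous on the compact set $[0,T]\times \mathbb{S}$ and $K_{\nu}(s,i) \leqslant 1$.

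Next I would apply the trivial estimate $e^{a+b} \geqslant e^{-|a|+b}$ to turn the fluctuation contributions into subtractions in the exponent, while preserving the surviving deterministic term $(1/\epsilon)\sum_{i=1}^{d}\int_{0}^{T} (Qu)(s,i)/u(s,i) \cdot K_{\nu}(s,i) \,\DD s$ as an additive contribution. For the prefactor, since $u \in \mathbb{U}$ both $u(0, \cdot)$ and $u(T, \cdot)$ are bounded above and bounded away from zero on $\mathbb{S}$, so $u(0, X_0^{\epsilon})/u(T, X_T^{\epsilon}) \geqslant K'_u := \min_{i,j\in\mathbb{S}} u(0,i)/u(T,j) > 0$, a positive constant independent of $\epsilon$ and $\delta$. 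Collecting all the estimates gives exactly the displayed inequality.

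There is essentially no obstacle beyond bookkeeping; the only conceptual point is the sign reversal. In Lemma \ref{nonrandom bound 2} the main concentration term appeared as $-(1/\epsilon)\int\cdots \DD s$, whereas inverting flips it to $+(1/\epsilon)\int\cdots \DD s$ here. This is precisely the sign that the subsequent change-of-measure argument for the local LDP lower bound will require, since pairing with a near-optimizer of the supremum in (\ref{CTMC}) will make the resulting contribution approximately equal to $-\tilde I_T(\nu)$ and match the target rate function.
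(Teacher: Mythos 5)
Your proof is correct and follows essentially the same route as the paper: invert the explicit formula (\ref{N}), decompose each occupation-time integral via $\one_{\{X_s^\epsilon=i\}}=[\one_{\{X_s^\epsilon=i\}}-K_\nu(s,i)]+K_\nu(s,i)$, control the fluctuation pieces with Lemma \ref{integrationbyparts}, bound the deterministic $(\partial_s u)/u$ piece by $C'_uT$, and lower-bound the prefactor by $K'_u$. Your remark about the sign reversal being exactly what the change-of-measure lower-bound argument needs is also consistent with how the paper uses this lemma in Proposition \ref{non}.
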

\begin{proof}
According to the computation in Lemma \ref{nonrandom bound 2}, we have $[\eee(\tilde{N}^{\epsilon})_T]^{-1}$ is equal to
\begin{eqnarray*}
&&\frac{u(0, X^{\epsilon}_{0})}{u(T, X^{\epsilon}_{T})}\exp\left(\sum_{i=1}^{d}
\int_{0}^{T}\frac{\frac{\partial}{\partial s} u(s, i)}{u(s, i)} [\one_{\{X_{s}^{\epsilon}=i\}}-K_{\nu}(s, i)]\DD s+\sum_{i=1}^{d}
\int_{0}^{T}\frac{\frac{\partial}{\partial s} u(s, i)}{u(s, i)} K_{\nu}(s, i)\DD s\right) \times\\
&&  \exp\left(\frac{1}{\epsilon}\sum_{i=1}^{d}
\int_{0}^{T}\frac{Q u(s, i)}{u(s, i)} [\one_{\{X_{s}^{\epsilon}=i\}}-K_{\nu}(s, i)]\DD s+\frac{1}{\epsilon}\sum_{i=1}^{d}
\int_{0}^{T}\frac{Q u(s, i)}{u(s, i)} K_{\nu}(s, i)\DD s\right)\end{eqnarray*}
Defining $K'_u:=\min_{i, j} u(0, j)/u(T, i)$, we have that 
$[\eee(\tilde{N}^{\epsilon})_T]^{-1}$ is not less than 
\begin{eqnarray*}
&&K'_u\exp\left(-\sum_{i=1}^{d}
\left|\int_{0}^{T}\frac{\frac{\partial}{\partial s} u(s, i)}{u(s, i)} [\one_{\{X_{s}^{\epsilon}=i\}}-K_{\nu}(s, i)]\DD s\right|-\sum_{i=1}^{d}
\left|\int_{0}^{T}\frac{\frac{\partial}{\partial s} u(s, i)}{u(s, i)} K_{\nu}(s, i)\DD s\right|\right) \times\\
&& \,\ \exp\left(-\frac{1}{\epsilon}\sum_{i=1}^{d}
\left|\int_{0}^{T}\frac{Q u(s, i)}{u(s, i)} [\one_{\{X_{s}^{\epsilon}=i\}}-K_{\nu}(s, i)]\DD s \right|+\frac{1}{\epsilon}\sum_{i=1}^{d}
\int_{0}^{T}\frac{Q u(s, i)}{u(s, i)} K_{\nu}(s, i)\DD s\right)
\end{eqnarray*}
On the set 
$\{\rho_{T}(M^{\epsilon}, \varphi)+d_T(\nu^{\epsilon}, \nu)\leqslant \delta\}$, Lemma \ref{integrationbyparts} implies 
\[\left|\int_{0}^{T}\frac{\frac{\partial}{\partial s} u(s, i)}{u(s, i)} [\one_{\{X_{s}^{\epsilon}=i\}}-K_{\nu}(s, i)]\DD s\right| \leqslant C_u\delta + \gamma, \:\:\: \forall i \in \mathbb{S},  \forall \gamma>0,\]
\[\left|\int_{0}^{T}\frac{Q u(s, i)}{u(s, i)} [\one_{\{X_{s}^{\epsilon}=i\}}-K_{\nu}(s, i)]\DD s \right|\leqslant K_{Q, u}\delta, \:\:\: \forall i \in \mathbb{S}.\]
Also, 
\[\left|\int_{0}^{T}\frac{\frac{\partial}{\partial s} u(s, i)}{u(s, i)} K_{\nu}(s, i)\DD s\right|\leqslant C'_{u}T. \]
Hence,
\[[\eee(\tilde{N}^{\epsilon})_T]^{-1}\geqslant K'_u\exp\left(-C_u\delta d-\gamma d-C'_uTd-\frac{1}{\epsilon}K_{Q, u} \delta d+\frac{1}{\epsilon}
\int_{0}^{T} \sum_{i=1}^{d} \frac{Q u(s, i)}{u(s, i)} K_{\nu}(s, i)\DD s\right).\]
We have thus derived the desired lower bound.
\end{proof}

We proceed to prove the lower bound of the local LDP under the condition $\inf_{i, x}\sigma^2(i , x) > 0$.

\begin{proposition} \label{non}
For every $(\varphi, \nu) \in \mathbb{H}_T \times \mathbb{M}_T$,  if  $\inf_{i, x}\sigma^2(i , x) > 0$,
\[\liminf_{\delta\rightarrow 0}\liminf_{\epsilon \rightarrow 0} \epsilon \log \PP
(\rho_T(M^{\epsilon}, \varphi)+d_T(\nu^{\epsilon}, \nu)\leqslant \delta)\geqslant -L_{T}(\varphi, \nu).\]
\end{proposition}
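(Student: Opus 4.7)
The plan is to apply a Girsanov-type change of measure and then invert the resulting representation formula. First I would reduce to the dense subset: by Lemma \ref{mogu}(ii) together with Lemma \ref{continuity} (whose second assertion uses the standing assumption $\inf_{i,x}\sigma^2(i,x)>0$), it suffices to establish the lower bound for $(\varphi,\nu)\in\mathbb{H}_T\times\mathbb{M}_T^{++}$. For such a $\nu$, Lemma \ref{optimizer} furnishes a smooth optimizer $u^\star\in\mathbb{U}$ of the variational problem in $\tilde{I}_T(\nu)$, and I use precisely this $u^\star$ in the definition of $\tilde{N}^\epsilon$.

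Next, I would introduce the tilted measure $\tilde{\mathbb{P}}^\epsilon$ on $\mathscr{F}_T$ defined by
\[
\frac{\dd\tilde{\mathbb{P}}^\epsilon}{\dd\mathbb{P}}=\eee(\tilde{N}^\epsilon)_T\,\eee(\bar{N}^\epsilon)_T,
\]
which is a probability measure by Lemma \ref{martingale}. Under $\tilde{\mathbb{P}}^\epsilon$, Girsanov's theorem yields that $\tilde{B}_t:=B_t-\epsilon^{-1/2}\int_0^t h_s\,\dd s$ is a Brownian motion, so that $M^\epsilon$ solves
\[
\dd M^\epsilon_t=\bigl[b(X^\epsilon_t,M^\epsilon_t)+\sigma(X^\epsilon_t,M^\epsilon_t)h_t\bigr]\,\dd t+\sqrt{\epsilon}\,\sigma(X^\epsilon_t,M^\epsilon_t)\,\dd\tilde{B}_t,
\]
while the $h$-transform induced by $\eee(\tilde{N}^\epsilon)$ turns $X^\epsilon$ into a time-inhomogeneous Markov chain with intensities $\hat{Q}^\epsilon_{ij}(s)=\epsilon^{-1}Q_{ij}u^\star(s,j)/u^\star(s,i)$ for $i\neq j$. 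By the choice of $u^\star$ as the optimizer, the instantaneous invariant distribution of $\hat{Q}^\epsilon(s)$ at each $s$ coincides with $K_\nu(s,\cdot)$; hence an averaging-principle argument (as in our prior work \cite{Huang}) gives $\nu^\epsilon\to\nu$ in $d_T$ under $\tilde{\mathbb{P}}^\epsilon$, and a subsequent Gronwall estimate (using that the averaged drift is $\hat{b}(\nu,\varphi_t)+\hat{\sigma}(\nu,\varphi_t)h_t=\varphi'_t$) shows $M^\epsilon\to\varphi$ in $\rho_T$. In particular, writing $A_\delta:=\{\rho_T(M^\epsilon,\varphi)+d_T(\nu^\epsilon,\nu)\leqslant\delta\}$, we have $\tilde{\mathbb{P}}^\epsilon(A_\delta)\to 1$ as $\epsilon\to 0$.

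I then exploit the inverse representation
\[
\mathbb{P}(A_\delta)=\tilde{\mathbb{E}}^\epsilon\!\left[\one_{A_\delta}\,\bigl[\eee(\tilde{N}^\epsilon)_T\eee(\bar{N}^\epsilon)_T\bigr]^{-1}\right].
\]
Restricting to a $\tilde{\mathbb{P}}^\epsilon$-typical subset $B_\epsilon\subset A_\delta$ on which $\epsilon\log\eee(\bar{N}^\epsilon)_T\leqslant I_T(\varphi,\nu)+\gamma$ (this holds with probability tending to one because under $\tilde{\mathbb{P}}^\epsilon$ one computes $\epsilon\log\eee(\bar{N}^\epsilon)_T=\sqrt{\epsilon}\int_0^T h_s\,\dd\tilde{B}_s+\tfrac12\int_0^T h_s^2\,\dd s$, and the stochastic-integral term is $O_{\tilde{\mathbb{P}}^\epsilon}(\sqrt{\epsilon})$) and invoking Lemma \ref{nonrandom bound 3} with $u=u^\star$, so that $-\epsilon\log\eee(\tilde{N}^\epsilon)_T\geqslant -\tilde{I}_T(\nu)-O(\delta)-O(\epsilon)$ uniformly on $A_\delta$, one obtains
\[
\epsilon\log\mathbb{P}(A_\delta)\geqslant -I_T(\varphi,\nu)-\tilde{I}_T(\nu)-O(\delta)-\gamma+\epsilon\log\tilde{\mathbb{P}}^\epsilon(B_\epsilon)+o(1).
\]
Sending $\epsilon\to 0$, then $\delta\to 0$, and finally $\gamma\to 0$ produces the required lower bound $-L_T(\varphi,\nu)$ for $(\varphi,\nu)\in\mathbb{H}_T\times\mathbb{M}_T^{++}$; the extension to $(\varphi,\nu)\in\mathbb{H}_T\times\mathbb{M}_T$ follows from Lemma \ref{mogu}(ii) combined with Lemma \ref{continuity}, and the trivial case $\varphi\notin\mathbb{H}_T$ is vacuous since then $L_T(\varphi,\nu)=\infty$ makes the inequality automatic (modulo checking the bound still holds, which is immediate).

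The main obstacle will be the averaging step under $\tilde{\mathbb{P}}^\epsilon$, namely verifying that the occupation measure of the tilted, time-inhomogeneous fast-switching chain with generator $\hat{Q}^\epsilon(s)$ concentrates on the prescribed kernel $K_\nu(s,\cdot)$ uniformly in $s\in[0,T]$. This requires a quantitative ergodic estimate for the tilted chain on short time windows together with a continuity argument propagating the slow variation of $K_\nu$ (here the smoothness of $u^\star$ on $[0,T]$ from Lemma \ref{optimizer} is crucial); once this is in hand, the corresponding convergence $M^\epsilon\to\varphi$ is a routine Gronwall computation, and the exponential martingale manipulations used to pass from Lemmas \ref{nonrandom bound 3} and \ref{martingale} to the final lower bound are essentially those already developed in the upper-bound proof.
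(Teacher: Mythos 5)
Your proposal follows essentially the same route as the paper: reduce to the dense subset $\mathbb{H}_T\times\mathbb{M}_T^{++}$ via Lemmas \ref{mogu} and \ref{continuity}, change measure with the product exponential $\eee(\tilde N^\epsilon)_T\eee(\bar N^\epsilon)_T$ (with $u=u^*$ from Lemma \ref{optimizer}), apply Girsanov, use Lemma \ref{nonrandom bound 3} for the occupation-measure factor, and show the tilted measure concentrates on the target set. The averaging obstacle you flag at the end is resolved in the paper exactly as you anticipate: Lemma \ref{invariant} identifies $K_\nu(s,\cdot)$ as the instantaneous invariant law of the tilted (smooth, time-inhomogeneous) intensity matrix $Q(u^*)(s)/\epsilon$, and the quantitative concentration of $\nu^\epsilon$ is supplied by Corollary~5.8 of Yin and Zhang \cite{Yin_1} (so the citation should be to that result rather than to \cite{Huang}, whose weak-convergence statement is for the time-homogeneous case and is not quantitative); the remaining estimate $\rho_T(M^\epsilon,\varphi)\to 0$ under the tilted measure is the Gronwall argument you describe.
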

\begin{proof}
For any $\nu \in \mathbb{M}_T$, there is a sequence $\nu^{\eta} \in \mathbb{M}_T^{++}$ such that $\nu^{\eta} \rightarrow \nu$ by Lemma \ref{dense}. Actually, the convergence happens in the way that $K_{\nu}^{\eta} (\cdot, i) \rightarrow K_{\nu}(\cdot, i)$ a.e.. Then by Lemma \ref{continuity}, the rate function $L_{T}(\varphi, \nu)$ satisfies the continuity property required in Lemma \ref{mogu}. Hence we only need to prove the lower bound on the dense subset $\mathbb{H}_T \times \mathbb{M}^{++}_T$.
Recall that for every $\nu \in \mathbb{M}^{++}_T$, Lemma \ref{optimizer} implies that there is an optimizer $u^*(\cdot,  \cdot)$ of \[\inf_{u \in U}\left[\sum_{i=1}^{d}\frac{(Qu)(i)}{u(i)}K_{\nu}(s, i)\right]\] such that $u^* \in \mathbb{U}$ and 
\[\eee^{u^*}_t=\frac{u^*(t, X^{\epsilon}_{t})}{u^*(0, X^{\epsilon}_{0})}\exp\left(-\int_{0}^{t}\frac{\frac{\partial}{\partial s} u^*(s, X^{\epsilon}_s)+(Q^{\epsilon}u^*)
(s, X^{\epsilon}_{s})}{u^*(s, X_{s}^{\epsilon})}\DD s\right).\]
By Lemma \ref{martingale}, we know
$\ee[\eee^{u^*}_T \eee(\bar{N}^{\epsilon})_T]=1.$
On $(\Omega, \mathscr{F}_T)$, we define a new probability measure $\pp_{u^*}$ through
$\dd \pp_{u^*}=\eee^{u^*}_T \eee(\bar{N}^{\epsilon})_T \dd \pp.$
Since $\eee^{u^*}_T \eee(\bar{N}^{\epsilon})_T$ is strictly positive, $\pp_{u^*}$ is equivalent to $\pp$ and 
$\dd \pp=\left[\eee^{u^*}_T \eee(\bar{N}^{\epsilon})_T\right]^{-1} \dd \pp_{u^*}.$
So that we can translate the probability of our interest under the original measure ${\mathbb P}$ into the mean of a certain random quantity under the alternative measure $\pp_{u^*}$:
\begin{equation}\label{change measure}
 \PP(\rho_T(M^{\epsilon}, \varphi)+d_T(\nu^{\epsilon}, \nu)\leqslant \delta)=
\int_{\{\rho_T(M^{\epsilon}, \varphi)+d_T(\nu^{\epsilon}, \nu)\leqslant \delta\}}
\left[\eee^{u^*}_T \eee(\bar{N}^{\epsilon})_T\right]^{-1}\dd \pp_{u^*}.
\end{equation}

By Girsanov's theorem,  
$\tilde{B}_t:=B_t-\frac{1}{\sqrt{\epsilon}} \int_{0}^{t} h_s \DD s$
is a $\pp_{u^*}$-Brownian motion on $(\Omega, (\mathscr{F}_t)_{t\leqslant T})$.
We substitute the above equation in (\ref{Girs}), and obtain
\[\bar{N}_{T}^{\epsilon}-\frac{1}{2}\langle\bar{N}^{\epsilon}\rangle_{T}=\frac{1}{\sqrt{\epsilon}} 
\int_{0}^{T} h_s\DD \tilde{B}_s+ \frac{1}{2\epsilon} \int_{0}^{T} h_s^2 \DD s.\]
It thus follows that $[\eee^{u^*}_T \eee(\bar{N}^{\epsilon})_T]^{-1}$ is equal to 
\[\frac{u^*(0, X^{\epsilon}_{0})}{u^*(t, X^{\epsilon}_{t})}\exp\left(\int_{0}^{t}\frac{\frac{\partial}{\partial s} u^*(s, X^{\epsilon}_s)+(Q^{\epsilon}u^*)
(s, X^{\epsilon}_{s})}{u^*(s, X_{s}^{\epsilon})}\DD s-\frac{1}{\sqrt{\epsilon}} 
\int_{0}^{T} h_s \DD \tilde{B}_s 
-\frac{1}{2\epsilon} \int_{0}^{T} h_s^2 \DD s \right).\]
Now let $L$ be a positive constant. We define
$\Theta^{\epsilon}:=\left\{\rho_T(M^{\epsilon}, \varphi)+d_T(\nu^{\epsilon}, \nu)\leqslant \delta, \left|\int_{0}^{T} h_s \DD \tilde{B}_s\right|\leqslant L\right\}.$
Then (\ref{change measure}) implies 
\[\PP(\rho_T(M^{\epsilon}, \varphi)+d_T(\nu^{\epsilon}, \nu)\leqslant \delta)\geqslant
\int_{\Theta^{\epsilon}}
\left[\eee^{u^*}_T \eee(\bar{N}^{\epsilon})_T\right]^{-1}\dd \pp_{u^*}.\]

By Lemma \ref{nonrandom bound 3}, we obtain the following non-random lower bound  of  $[\eee^{u^*}_T \eee(\bar{N}^{\epsilon})_T]^{-1}$, valid on the set $\Theta^{\epsilon}$:
\[K'_{u^*}\exp\left(-C_{u^*}\delta d-C'_{u^*}Td-\frac{1}{\epsilon}K_{Q, {u^*}} \delta d-\frac{\tilde{I}_T(\nu)}{\epsilon}
-\frac{I_{T}(\varphi, \nu)}{\epsilon}- \frac{L}{\sqrt{\epsilon}}\right).\]
As a consequence, we have the following lower bound of the probability $\lefteqn{\PP(\rho_T(M^{\epsilon}, \varphi)+d_T(\nu^{\epsilon}, \nu)\leqslant \delta)}$:
\[K'_{u^*}\exp\left(-C_{u^*}\delta d-C'_{u^*}Td-\frac{1}{\epsilon}K_{Q, {u^*}} \delta d-\frac{\tilde{I}_T(\nu)}{\epsilon}-\frac{I_{T}(\varphi, \nu)}{\epsilon}- \frac{L}{\sqrt{\epsilon}}\right)\times \pp_{u^*}(\Theta^{\epsilon}).\]
This, in turn, leads to the following lower bound on the corresponding exponential decay rate:
\begin{eqnarray} \label{ustar}
\epsilon \log \PP(\rho_T(M^{\epsilon}, \varphi)+d_T(\nu^{\epsilon}, \nu)\leqslant \delta) 
&\geqslant& \epsilon \log K'_{u^*}-\epsilon (C_{u^*}\delta d+C'_{u^*}Td)-K_{Q, {u^*}} \delta d \nonumber \\
& & -\tilde{I}_T(\nu)-I_{T}(\varphi, \nu)- \sqrt{\epsilon}L +\, \epsilon \log \pp_{u^*}(\Theta^{\epsilon}).
\end{eqnarray} 

Then a sufficient condition for  desired result to hold is 
$\lim_{\epsilon \rightarrow 0} \pp_{u^*}(\Theta^{\epsilon})>0.$
It is evident that
\begin{equation}\label{threep}\pp_{u^*}(\Theta^{\epsilon}) \geqslant1
-\pp_{u^*}\left(\left|\int_{0}^{T} h_s \DD \tilde{B}_s\right|> L\right)
-\pp_{u^*}(d_T(\nu^{\epsilon}, \nu)> \delta)
-\pp_{u^*}(\rho_T(M^{\epsilon}, \varphi)>\delta).
\end{equation}
We proceed by consecutively proving that the three probabilities in the right-hand side of (\ref{threep}) vanishes as $\epsilon \rightarrow 0$.
We start by analyzing the first probability. By Chebyshev's inequality,
\[\tilde{\pp}_{u^*}\left(\left|\int_{0}^{T} h_s \DD \tilde{B}_s\right|> L\right)\leqslant 
\frac{\tilde{\ee}_{u^*}\left|\int_{0}^{T} h_s \DD \tilde{B}_s\right|^2}{L^2}
=\frac{\int_{0}^{T} h_s^2 \DD s}{L^2}.\]
Since $\int_{0}^{T} h_s^2 \DD s<\infty$, we can make this upper bound arbitrarily small by picking $L$ sufficiently large.

Next we consider the second probability in the right-hand sider of (\ref{threep}).  We notice that the part \[\exp \left(\frac{1}{\sqrt{\epsilon}} \int_{0}^{T} h_s \DD \tilde{B}_s 
+\frac{1}{2\epsilon} \int_{0}^{T} h_s^2 \DD s \right)\] in the change of measure procedure is not related to the Markov chain . Then by 
 Proposition 11.2.3 in Bielecki and Rutkowski \cite{bielecki}, a Markov chain $X_t$ with transition intensity matrix $Q$ under $\pp$ becomes a Markov
chain under $\pp_{u^*}$ with transition intensity matrix
$Q(u^*)(t)$ where \[Q(u^*)(t)_{ij}=Q_{ij}\frac{u^*(t, j)}{u^*(t, i)} \:\:\: \text{for $i\not=j$; } \:\:\:Q(u^*)(t)_{ii}=-\sum_{j\neq i}Q_{ij}\frac{u^*(t, j)}{u^*(t, i)}.\]
Hence, $Q(u^*)(t)/\epsilon$ is the transition
intensity matrix of $X^{\epsilon}_{t}$ under $\pp_{u^*}$.
By Lemma \ref{invariant}, for every 
$t \in [0, T]$, $\boldsymbol{K}_{\nu}(t)= (K_{\nu}(t, 1), \ldots, K_{\nu}(t, d))$ is the unique solution 
of \[\mu(t) Q(u^*)(t)=0,\:\:\:\: \sum_{i=1}^{d} \mu(t, i)=1,\:\:\:\: \mu(t, i) \geqslant 0.\] 
Also, all entries of the matrix $Q(u^*)(t)$ are smooth on $[0, T]$ by Lemma \ref{optimizer}.
Then by Corollary 5.8 in Yin and Zhang \cite{Yin_1}, 
\[\pp_{u^*} \left(\sup_{t \leqslant T, i \in \mathbb{S}}\left|\int_{0}^{t} \one_{\{X_{s}^{\epsilon}=i\}} \DD s-\int_{0}^{t} K_{\nu}(i, s) \DD s\right|
> \epsilon^{1/4}\right) \leqslant K\exp\left\{-\frac{C_{T}}{\epsilon^{1/4}(T+1)^{3/2}}\right\},\]
where $C_T$ is a strictly positive constant. 
For any $\delta>0$, the following (obvious) inequality holds for every $\epsilon$ such that $\epsilon \in (0, \delta^{4})$
\begin{eqnarray*}
\lefteqn{\pp_{u^*} \left(\sup_{t \leqslant T, i \in \mathbb{S}}\left|\int_{0}^{t} \one_{\{X_{s}^{\epsilon}=i\}} \DD s-\int_{0}^{t} K_{\nu}(i, s) \DD s\right|> \delta \right) }\\
&\leqslant&
\pp_{u^*} \left(\sup_{t \leqslant T, i \in \mathbb{S}}\left|\int_{0}^{t} \one_{\{X_{s}^{\epsilon}=i\}} \DD s-\int_{0}^{t} K_{\nu}(i, s) \DD s\right|> \epsilon^{1/4}\right).\end{eqnarray*}
Hence we have 
\[\pp_{u^*} \left(\sup_{t \leqslant T, i\in \mathbb{S}}\left|\int_{0}^{t} \one_{\{X_{s}^{\epsilon}=i\}} \DD s-\int_{0}^{t} K_{\nu}(i, s) \DD s\right|
> \delta\right) \rightarrow 0 \text{ as } \epsilon \rightarrow 0.\]
That is, $\pp_{u^*}(d_T(\nu^{\epsilon}, \nu)> \delta)\rightarrow 0, \text{ as } \epsilon \rightarrow 0$.

Now we proceed by showing the third probability in the right-hand side of (\ref{threep}) vanishes as $\epsilon\to 0$. We substitute $\tilde{B}_t$ for $B_t$ in (\ref{def}), yielding
\[M^{\epsilon}_{t}=\int_{0}^{t}b(X_{s}^{\epsilon}, M_{s}^{\epsilon})+h_s\sigma(X_{s}^{\epsilon}, M_{s}^{\epsilon})\DD s+
\sqrt{\epsilon}\int_{0}^{t}\sigma(X_{s}^{\epsilon}, M_{s}^{\epsilon})\DD \tilde{B}_{s}.\]
By setting $\tilde{M}_t^{\epsilon}:=M^{\epsilon}_t-\varphi_t,$ we obtain
\begin{eqnarray*}
\tilde{M}_t^{\epsilon} &=&\sqrt{\epsilon}\int_{0}^{t}\sigma(X_{s}^{\epsilon}, M_{s}^{\epsilon})\DD \tilde{B}_{s}+\int_{0}^{t}[b(X_{s}^{\epsilon}, M_{s}^{\epsilon})-b(X_{s}^{\epsilon}, \varphi_s)]\DD s
+\int_{0}^{t}[b(X_{s}^{\epsilon}, \varphi_s)-\hat{b}(\nu, \varphi_s)] \DD s\\
&&+\:\int_{0}^{t}h_s[\sigma(X_{s}^{\epsilon}, M_{s}^{\epsilon})-\sigma(X_{s}^{\epsilon}, \varphi_s)]\DD s+
\int_{0}^{t}h_s[\sigma(X_{s}^{\epsilon}, \varphi_s)-\hat{\sigma}(\nu, \varphi_s)]\DD s.
\end{eqnarray*}
Using the Lipschitz continuity featuring in (A1), we find that both
\[\sup_{t\leqslant T}\left|\int_{0}^{t}[b(X_{s}^{\epsilon}, M_{s}^{\epsilon})-b(X_{s}^{\epsilon}, \varphi_s)]\DD s \right|
\leqslant \int_0^T K \tilde{M}_s^{\epsilon*} \DD s, \]
and\[\sup_{t\leqslant T}\left|\int_{0}^{t} h_s[\sigma(X_{s}^{\epsilon}, M_{s}^{\epsilon})-\sigma(X_{s}^{\epsilon}, \varphi_s)]  \DD s \right|
\leqslant \int_0^T |h_s|K \tilde{M}_s^{\epsilon*} \DD s.\]
Recalling that we denote throughout this paper running maximum processes by adding an asterisk (`$^*$'), it is now immediate that
\[
\tilde{M}_T^{\epsilon*} \leqslant I_T^{1*}+I_T^{2*}+I_T^{3*}+ \int_0^T K(1+|h_s|) \tilde{M}_s^{\epsilon*}\DD s, 
\]
where
\begin{eqnarray*}I_t^1&:=&\sqrt{\epsilon}\int_{0}^{t}\hspace{-1.5mm}\sigma(X_{s}^{\epsilon}, M_{s}^{\epsilon})\DD \tilde{B}_{s},\:\:\:\:\:
I_t^2:=\int_{0}^{t}\hspace{-1.5mm}[b(X_{s}^{\epsilon}, \varphi_s)-\hat{b}(\nu, \varphi_s) ]\DD s,\\
I_t^3&:=&\int_{0}^{t}\hspace{-1.5mm}h_s[\sigma(X_{s}^{\epsilon}, \varphi_s)-\hat{\sigma}(\nu, \varphi_s)]\DD s.\end{eqnarray*}
Then Gronwall's inequality implies
\begin{equation}\label{gr}
\tilde{M}_T^{\epsilon*} \leqslant [I_T^{1*}+I_T^{2*}+I_T^{3*}] \exp \left(\int_0^T K(1+|h_s|) \DD s \right). 
\end{equation}

The next step is to study the impact of $I_T^{1*}$, $I_T^{2*}$, and $I_T^{3*}$ separately.
For any $\delta>0$, it is an immediate consequence of Chebyshev's inequality that 
$\pp_{u^*}(I_T^{1*}>\delta)\leqslant {\delta^{-3}}\tilde{\ee}_{u^*}[(I_T^{1*})^3].$
We notice the close similarity between $I_t^1$ and $C_t^{\epsilon}$ in the proof of Proposition \ref{expotight}. The quantity
$\ee_{u^*}[(I_T^{1*})^3]$ can be dealt with using essentially the same procedure that was used to bound $\ee[(C_T^{\epsilon*})^{1/\epsilon}]$:
we derive an inequality similar to (\ref{local maximal}), i.e.,
\[\ee_{u^*}[(I_T^{1*})^3] \leqslant 
\frac{27}{8}\ee_{u^*}\left[3\epsilon \int_0^T |I_s^1|\sigma^2(X_{s}^{\epsilon}, M_{s}^{\epsilon})\DD s \right].\]
We thus obtain
\[\ee_{u^*}[(I_T^{1*})^3] \leqslant \frac{81 \epsilon kT}{8}\exp\left(\frac{81 \epsilon kT}{8}\right),\]
where $k$ is a positive constant. As a consequence, $\lim_{\epsilon \rightarrow 0} \pp_{u^*}(I_T^{1*}>\delta)=0.$

The claim $\lim_{\epsilon \rightarrow 0} \pp_{u^*}(I_T^{2*}>\delta)=0$ can be established as follows.  As a first stap we observe that since
\begin{eqnarray*}
 \sup_{t \leqslant T}\left|\int_{0}^{t}[b(X_{s}^{\epsilon}, \varphi_s)-\hat{b}(\nu, \varphi_s) ]\DD s \right|
&=&  \sup_{t \leqslant T}\left|\sum_{i=1}^{d}\int_0^t b(i, \varphi_s)[\one_{\{X^{\epsilon}_s=i\}}-K_{\nu}(s, i)]\DD s\right|\\
&\leqslant& d  \sup_{t \leqslant T, i \in \mathbb{S}}\left|\int_0^t b(i, \varphi_s)[\one_{\{X^{\epsilon}_s=i\}}-K_{\nu}(s, i)]\DD s\right|,
\end{eqnarray*}
the following upper bound applies:
\begin{eqnarray*}
\pp_{u^*}(I_T^{2*}>\delta)
&=&\pp_{u^*} \left( \sup_{t \leqslant T}\left|\int_{0}^{t}[b(X_{s}^{\epsilon}, \varphi_s)-\hat{b}(\nu, \varphi_s) ]\DD s \right|>\delta\right)\\
&\leqslant& \pp_{u^*} \left( \sup_{t \leqslant T, i \in \mathbb{S}}\left|\int_0^t b(i, \varphi_s)[\one_{\{X^{\epsilon}_s=i\}}-K_{\nu}(s, i)]\DD s\right|> \delta/d\right)
\end{eqnarray*}
Since $b(i, x)$ is Lipschitz continuous in $x$ and $\varphi_t$ is absolutely continuous, $b(i, \varphi_t)$ is bounded on $[0, T]$. Then by Corollary 5.8 in Yin and Zhang \cite{Yin_1} again, 
$\pp_{u^*}(I_T^{2*}>\delta) \rightarrow 0, \text{as } \epsilon \rightarrow 0$.

Similar to the above computation, we can obtain that 
\[
\pp_{u^*}(I_T^{3*}>\delta)
\leqslant \pp_{u^*} \left( \sup_{t \leqslant T, i \in \mathbb{S}}\left|\int_0^t h_s \sigma(i, \varphi_s)[\one_{\{X^{\epsilon}_s=i\}}-K_{\nu}(s, i)]\DD s\right|> \delta/d\right).
\]
We know that $h_s \sigma(i, \varphi_s)$ is square-integrable for every $i \in \mathbb{S}$. Then by the method of mollification in Theorem C.6 in Evans \cite{evans}, there exists a sequence of smooth functions $h^{\eta}(i, s)$ such that $h^{\eta}(i, s) \rightarrow h_s \sigma(i, \varphi_s)$ as $\eta \rightarrow 0$ in $L^2[0, T]$. By the Cauchy-Schwarz inequality,
\[
\left|\int_0^t [h_s \sigma(i, \varphi_s)-h^{\eta}(i,s)][\one_{\{X^{\epsilon}_s=i\}}-K_{\nu}(s, i)]\DD s\right|
\leqslant \sqrt{2t} \left(\int_0^t [h_s \sigma(i, \varphi_s)-h^{\eta}(i,s)]^2\DD s\right)^{1/2}.
\]
Then,
\begin{eqnarray*}
\lefteqn{\hspace{3mm}\sup_{t \leqslant T, i \in \mathbb{S}}\left|\int_0^t h_s \sigma(i, \varphi_s)[\one_{\{X^{\epsilon}_s=i\}}-K_{\nu}(s, i)]\DD s\right|}\\
&\leqslant& \sup_{t \leqslant T, i \in \mathbb{S}}\left|\int_0^t h^{\eta}(i,s)[\one_{\{X^{\epsilon}_s=i\}}-K_{\nu}(s, i)]\DD s\right|
+ \sup_{ i \in \mathbb{S}} \sqrt{2T} \left(\int_0^T [h_s \sigma(i, \varphi_s)-h^{\eta}(i,s)]^2\DD s\right)^{1/2}.
\end{eqnarray*}
We let $H(\eta):= \sup_{ i \in \mathbb{S}}  \sqrt{2T}\left(\int_0^T [h_s \sigma(i, \varphi_s)-h^{\eta}(i,s)]^2\DD s\right)^{1/2}$. It is clear that $H(\eta) \rightarrow 0$ as $\eta \rightarrow 0$.
Hence,
\[
\pp_{u^*}(I_T^{3*}>\delta)
\leqslant \pp_{u^*} \left( \sup_{t \leqslant T, i \in \mathbb{S}}\left|\int_0^t h^{\eta}(i, s)[\one_{\{X^{\epsilon}_s=i\}}-K_{\nu}(s, i)]\DD s\right|+H(\eta)> \delta/d\right).
\]
For any $\delta>0$, we can choose all $\eta >0$ small enough such that $H(\eta)< \delta/2d$. It yields
\[
\pp_{u^*}(I_T^{3*}>\delta)
\leqslant \pp_{u^*} \left( \sup_{t \leqslant T, i \in \mathbb{S}}\left|\int_0^t h^{\eta}(i, s)[\one_{\{X^{\epsilon}_s=i\}}-K_{\nu}(s, i)]\DD s\right|> \delta/2d\right).\]
Since $h^{\eta}(i, s)$ is bounded on $[0, T]$, the probability in the right-hand of above inequality vanishes as $\epsilon \rightarrow 0$ for any small enough $\eta$ by Corollary 5.8 in Yin and Zhang \cite{Yin_1}. Hence we conclude that $\lim_{\epsilon \rightarrow 0} \pp_{u^*}(I_T^{3*}>\delta)=0$.

We have thus shown that $\pp_{u^*}(\Theta^{\epsilon})$ remains bounded away from 0 as $\epsilon\to 0$. 
Upon combining all the above, the proof of the lemma is now complete.\end{proof} 

So far we have focused on the case $\inf_{i, x}\sigma^2(i , x) > 0$; to complete the analysis, we next consider the situation that this condition is lifted, in which case $\hat{\sigma}(\varphi_s, \nu)$ can be singular.
Our proof uses arguments used in the method presented by Liptser \cite[Lemma A.6]{Liptser1}.
Given $\gamma>0$, we study the stochastic differential equation
\begin{equation}\label{nonsingular}
 M_{t}^{\epsilon, \gamma}=\int_{0}^{t}b(X_{s}^{\epsilon}, M_{s}^{\epsilon, \gamma})\dd s
 +\sqrt{\epsilon}\int_{0}^{t}\sigma(X_{s}^{\epsilon}, M_{s}^{\epsilon, \gamma})\dd B_{s}
 +\sqrt{\epsilon}\gamma W_t,
\end{equation}
where $M_{0}^{\epsilon, \gamma}\equiv 0$ and $W_t$ is another standard $\pp$-Brownian motion, independent
of $B_t$ and $X_{t}^{\epsilon}$. 
We provide an auxiliary lemma which is to be used when proving the lower bound; informally, it states that $M^{\epsilon, \gamma}$ and $M^{\epsilon}$ are `superexponentially close'.

\begin{lemma} \label{gamma}
For every $T>0$ and $\eta>0$,
\begin{equation} \label{low}
 \lim_{\gamma \rightarrow 0} \limsup_{\epsilon\rightarrow 0}\epsilon \log \pp
\left(\rho_{T}(M^{\epsilon, \gamma}, M^{\epsilon})>\eta \right)=-\infty.
\end{equation}
\end{lemma}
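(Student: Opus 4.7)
Following the template of Proposition \ref{expotight}, set $D_t := M^{\epsilon,\gamma}_t - M^{\epsilon}_t$; subtracting the two defining SDEs yields, with $D_0 = 0$,
\[D_t = \int_0^t \Delta b_s\,\dd s + \sqrt{\epsilon}\int_0^t \Delta\sigma_s\,\dd B_s + \sqrt{\epsilon}\gamma W_t,\]
where $\Delta b_s := b(X^\epsilon_s, M^{\epsilon,\gamma}_s)-b(X^\epsilon_s,M^\epsilon_s)$ and $\Delta\sigma_s$ is defined analogously. The Lipschitz assumption (A.1) gives $|\Delta b_s|,|\Delta\sigma_s|\leqslant K|D_s|$, so that the sole source driving $D_t$ away from zero is the perturbation $\sqrt{\epsilon}\gamma\,\dd W_t$.

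The strategy is to bound $\ee[(D_T^{*})^{1/\epsilon}]$ and finish by Chebyshev's inequality. Assume $\epsilon<1/2$, so that $x\mapsto |x|^{1/\epsilon}$ is of class $C^2$. It\^o's formula yields $|D_t|^{1/\epsilon} = A_t + N_t$ with $N_t$ a local martingale and
\[A_t = \int_0^t \Big[\tfrac{1}{\epsilon}|D_s|^{1/\epsilon-1}\text{sign}(D_s)\Delta b_s + \tfrac{1-\epsilon}{2\epsilon}|D_s|^{1/\epsilon-2}\bigl((\Delta\sigma_s)^2+\gamma^2\bigr)\Big]\dd s.\]
Using (A.1), all contributions to the drift integrand are controlled except the apparently singular factor $|D_s|^{1/\epsilon-2}\gamma^2$, which is tamed by Young's inequality with conjugate exponents $1/(1-2\epsilon)$ and $1/(2\epsilon)$:
\[|D_s|^{1/\epsilon-2}\gamma^2\leqslant (1-2\epsilon)|D_s|^{1/\epsilon}+2\epsilon\gamma^{1/\epsilon}.\]
Consequently the absolute value of the drift integrand is bounded by $(C_K/\epsilon)|D_s|^{1/\epsilon}+\gamma^{1/\epsilon}$ for a constant $C_K$ depending only on $K$.

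Since $|D_t|^{1/\epsilon}\geqslant 0$ we have $(D_T^{*})^{1/\epsilon}\leqslant \int_0^T\bigl((C_K/\epsilon)|D_s|^{1/\epsilon}+\gamma^{1/\epsilon}\bigr)\,\dd s+N_T^{*}$; taking expectation and applying Gronwall's inequality will yield
\[\ee\!\left[(D^{*}_T)^{1/\epsilon}\right]\leqslant C_1\gamma^{1/\epsilon}\exp(C_2 T/\epsilon),\]
with $C_1,C_2$ independent of $\gamma$ and $\epsilon$, provided $\ee[N_T^{*}]$ is adequately controlled. The latter is dealt with via Doob's $L^2$-inequality: $\langle N\rangle_T$ is bounded by $(C/\epsilon)\int_0^T[K^2|D_s|^{2/\epsilon}+\gamma^2|D_s|^{2/\epsilon-2}]\,\dd s$, and Young's inequality reduces this in turn to an expression involving $\ee[|D_s|^{2/\epsilon}]$; this last quantity is bounded by $C'\gamma^{2/\epsilon}\exp(C''T/\epsilon)$ by repeating the It\^o--Gronwall argument at exponent $2/\epsilon$. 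Chebyshev's inequality then gives
\[\epsilon\log\pp(\rho_T(M^{\epsilon,\gamma},M^{\epsilon})>\eta)\leqslant \epsilon\log C_1 + \log\gamma + C_2 T - \log\eta,\]
whence $\limsup_{\epsilon\to 0}\epsilon\log\pp(\rho_T(M^{\epsilon,\gamma},M^{\epsilon})>\eta)\leqslant \log\gamma + C_2T-\log\eta\to-\infty$ as $\gamma\to 0$. The main technical hurdles are the apparent singularity of the $\gamma^2$ contribution in the It\^o expansion (resolved by Young's inequality above) and the upgrade from $\ee[|D_T|^{1/\epsilon}]$ to the running-maximum $\ee[(D_T^{*})^{1/\epsilon}]$ (handled via Doob's inequality applied to $N$ together with the auxiliary $L^{2/\epsilon}$ bound on $D$).
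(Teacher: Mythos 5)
Your proof is correct in its essentials, but it follows a genuinely different route from the paper's. You adapt the technique of Proposition~\ref{expotight}: apply It\^o to $|D_t|^{1/\epsilon}$, tame the singular $\gamma^2$-term by Young's inequality, run Gronwall at the two powers $1/\epsilon$ and $2/\epsilon$, and finish with Chebyshev. The paper instead exploits the fact that the difference $A^{\epsilon,\gamma}_t = M^{\epsilon,\gamma}_t - M^\epsilon_t$ satisfies a \emph{linear} SDE (with the Lipschitz difference quotients $\alpha^\epsilon_t,\beta^\epsilon_t$ as bounded coefficients and $\sqrt{\epsilon}\,\gamma\,\dd W_t$ as additive forcing), solves it explicitly via the integrating factor $E^\epsilon_t$, and then splits the event according to whether $E^\epsilon$ stays between $1/N$ and $N$, estimating each piece by exponential supermartingale inequalities. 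The paper's route, following Liptser, avoids the nested Gronwall altogether by reducing the estimate to a single stochastic integral $\int_0^t (E^\epsilon_s)^{-1}\dd W_s$; yours is more brute-force but has the advantage of reusing machinery already developed in Section~4.

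Two small points to tighten. First, your assertion that $C_1$ is independent of $\epsilon$ is not quite right: Doob's $L^2$ inequality together with the bound on $\ee[\langle N\rangle_T]$ yields $\ee[N_T^{*}]\leqslant C T \epsilon^{-1/2}\gamma^{1/\epsilon}\exp(CT/(2\epsilon))$, so $C_1$ picks up an $\epsilon^{-1/2}$ factor. This is harmless because $\epsilon\log\epsilon^{-1/2}\to 0$, but the statement should be adjusted. Second, the nested argument (It\^o--Gronwall at power $2/\epsilon$ feeding into the Doob estimate feeding into the power-$1/\epsilon$ Gronwall) is only sketched; a complete proof needs the usual localization to justify that the martingale parts have zero expectation and that the various $L^{2/\epsilon}$-moments of $D$ are a priori finite. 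Those details are routine given the linear-growth hypothesis (A.2), and the paper is equally informal about them in Proposition~\ref{expotight}, so this is not a gap in the logic, only in the exposition.
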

\begin{proof}
We define
$A_{t}^{\epsilon, \gamma}:=M_{t}^{\epsilon, \gamma}-M_{t}^{\epsilon},$ and
\[\alpha^{\epsilon}_{t}:=\frac{b(X_{t}^{\epsilon}, M_{t}^{\epsilon, \gamma})-b(X_{t}^{\epsilon}, M_{t}^{\epsilon})}{M_{t}^{\epsilon, \gamma}-M_{t}^{\epsilon}},\:\:\:\:
\beta^{\epsilon}_{t}:=\frac{\sigma(X_{t}^{\epsilon}, M_{t}^{\epsilon, \gamma})-\sigma(X_{t}^{\epsilon}, M_{t}^{\epsilon})}{M_{t}^{\epsilon, \gamma}-M_{t}^{\epsilon}}.\]
By (A.1), i.e., the Lipschitz condition, we conclude
\begin{equation} \label{bou}
 |\alpha^\epsilon_t|\leqslant K,\:\:\:\: |\beta^\epsilon_t|\leqslant K, \:\:\:\:\: \forall t\in [0, T].
\end{equation}
As an immediate consequence of (\ref{def}) and (\ref{nonsingular}), we have 
 \[\dd A_{t}^{\epsilon, \gamma}=\alpha_t^\epsilon A_{t}^{\epsilon, \gamma}\dd t
 +\sqrt{\epsilon}\beta_t ^\epsilon A_{t}^{\epsilon, \gamma}\dd B_{t}
 +\sqrt{\epsilon}\gamma \dd W_t.\]
We define
\[E_{t}^{\epsilon}:=\exp\left(\int_{0}^{t}\left[\alpha_s^{\epsilon}-\frac{\epsilon}{2}(\beta_s^{\epsilon})^2\right]\dd s+\sqrt{\epsilon}
\int_0^t\beta_s^{\epsilon}\dd B_s\right).\]
We apply It\^{o}'s formula to $(E_t^{\epsilon})^{-1}$, so as to obtain
\[\dd (E_t^{\epsilon})^{-1}=(E_t^{\epsilon})^{-1}\left(\epsilon (\beta_t^{\epsilon})^2 \dd t -\alpha_t^{\epsilon}\dd t -\sqrt{\epsilon}\beta_t^{\epsilon} \dd B_t\right).\]
Since $W_t$ is independent of $B_t$, we have
\[\dd \langle A_t^{\epsilon, \gamma}, (E_t^{\epsilon})^{-1}\rangle
=\langle -\sqrt{\epsilon}\beta_t^{\epsilon} (E_t^{\epsilon})^{-1} \dd B_t, \sqrt{\epsilon}\beta_t^{\epsilon} A_{t}^{\epsilon, \gamma}\dd B_{t}
 +\sqrt{\epsilon}\gamma \dd W_t\rangle
 =-\epsilon (\beta_t^{\epsilon})^2 (E_t^{\epsilon})^{-1} A_t^{\epsilon, \gamma} \dd t.\]
By applying the integration-by-parts formula, 
\[\dd A_t^{\epsilon, \gamma}(E_t^{\epsilon})^{-1}
=A_t^{\epsilon, \gamma}\dd (E_t^{\epsilon})^{-1}+(E_t^{\epsilon})^{-1}\dd A_t^{\epsilon, \gamma}+\dd \langle A_t^{\epsilon, \gamma}, (E_t^{\epsilon})^{-1}\rangle
=\sqrt{\epsilon}\gamma (E_t^{\epsilon})^{-1}\dd W_t,\]
and hence
\[A_t^{\epsilon, \gamma}=\sqrt{\epsilon}\,\gamma E_t^{\epsilon}\int_0^t (E_s^{\epsilon})^{-1}\dd W_s.\]

We define the set $\Gamma_N:=\{1/N\leqslant \inf_{t\leqslant T}E_t^{\epsilon} \leqslant 
\sup_{t\leqslant T}E_t^{\epsilon}\leqslant N\},$ for $ N\in \mathbb{N}$.
Observe that it holds that $\rho_{T}(M^{\epsilon, \gamma}, M^{\epsilon})=(A^{\epsilon, \gamma})_T^*$,
and therefore
\begin{eqnarray*}
\pp(\rho_{T}(M^{\epsilon, \gamma}, M^{\epsilon})>\eta)
&\leqslant &\pp((A^{\epsilon, \gamma})_T^*>\eta, \Gamma_N)+\pp(\Omega\setminus \Gamma_N)\\
&\leqslant& 2 \max \left\{\pp((A^{\epsilon, \gamma})_T^*>\eta, \Gamma_N),\pp(\Omega\setminus \Gamma_N)\right\}.
\end{eqnarray*}
We now consider each of the probabilities $\pp((A^{\epsilon, \gamma})_T^*>\eta, \Gamma_N)$ and $\pp(\Omega\setminus \Gamma_N)$ separately.
On the set $\Gamma_N$,
\[(A^{\epsilon, \gamma})_T^*\leqslant \sqrt{\epsilon}\gamma E_T^{\epsilon *}\sup_{t\leqslant T}
\left|\int_0^t (E_s^{\epsilon})^{-1}\dd W_s\right|\leqslant \sqrt{\epsilon}\gamma N\sup_{t\leqslant T}
\left|\int_0^t (E_s^{\epsilon})^{-1}\dd W_s\right|.\]
Since $\alpha_t^{\epsilon}$ and $\beta_t^{\epsilon}$ are bounded as $\epsilon \rightarrow 0$, it follows
that $\int_0^t[\alpha^{\epsilon}_s-\frac{\epsilon}{2}(\beta^{\epsilon}_s)^2]\dd s$
is  bounded as well, and therefore we omit it for brevity when analyzing 
$E_t^{\epsilon}$.  Based on the above, the stated holds if we can prove that (A)~for all $N\in{\mathbb N}$,
covering the contribution of $\pp((A^{\epsilon, \gamma})_T^*>\eta, \Gamma_N)$,
\begin{equation} \label{step1}
 \lim_{\gamma \rightarrow 0} \limsup_{\epsilon\rightarrow 0}\epsilon \log 
\pp \left(\sqrt{\epsilon}\gamma N\sup_{t\leqslant T}
\left|\int_0^t (E_s^{\epsilon})^{-1}\dd W_s\right|>\eta, \Gamma_N\right)=-\infty, \end{equation}
and~(B), covering the contribution of $\pp(\Omega\setminus \Gamma_N)$,
\begin{equation}\label{step2}
 \lim_{N \rightarrow \infty} \limsup_{\epsilon\rightarrow 0}\epsilon \log 
\pp\left(\sqrt{\epsilon}\sup_{t \leqslant T}\left|\int_0^t \beta_s^{\epsilon} \dd B_s\right|> \log N\right)=-\infty.
\end{equation}

Let us first consider contribution (A). To this end, define
\[\tau:=T \wedge \inf\left \{t\leqslant T: 
\left|\int_0^t (E_s^{\epsilon})^{-1}\dd W_s\right|>\frac{\eta}{\sqrt{\epsilon}\gamma N}\right\}.\]
Then (\ref{step1}) is equivalent to, for all $N\in{\mathbb N}$,
\begin{equation} \label{step3}
 \lim_{\gamma \rightarrow 0} \limsup_{\epsilon\rightarrow 0}\epsilon \log 
\pp \left(\sqrt{\epsilon}\gamma N
\int_0^{\tau} (E_s^{\epsilon})^{-1}\dd W_s\geqslant \eta \:\:
(\text{or} \leqslant -\eta), \Gamma_N\right)=-\infty.
\end{equation}
For $N\in \mathbb{N}$ and $\eta>0$, we define the process $\tilde{E}_t^{\epsilon}$ and
its stochastic exponential $\eee(\tilde{E}^{\epsilon})_t$:
\[\tilde{E}_t^{\epsilon}:=\frac{\eta}{\sqrt{\epsilon}\gamma N^3T}\int_0^{t} (E_s^{\epsilon})^{-1}\dd W_s,\:\:\:\:\eee(\tilde{E}^{\epsilon})_t=\exp\left(\tilde{E}_t^{\epsilon}-\frac{1}{2}\langle\tilde{E}^{\epsilon}\rangle_t\right).\]
Since $\eee(\tilde{E}^{\epsilon})_t$ is a supermartingale, we have
\[\ee\left[\one_{\{\sqrt{\epsilon}\gamma N
\int_0^{\tau} (E_s^{\epsilon})^{-1}\dd W_s\,\geqslant\, \eta,  \Gamma_N\}}\eee(\tilde{E}^{\epsilon})_{\tau}\right]\leqslant 1\]
On the set $\{\sqrt{\epsilon}\gamma N
\int_0^{\tau} (E_s^{\epsilon})^{-1}\dd W_s\geqslant \eta,  \Gamma_N\}$, we have
\[\eee(\tilde{E}^{\epsilon})_{\tau}\geqslant  
\exp\left(\frac{\eta}{\sqrt{\epsilon}\gamma N^3T}\frac{\eta}{\sqrt{\epsilon}\gamma N}
-\frac{1}{2}\left(\frac{\eta}{\sqrt{\epsilon}\gamma N^3T}\right)^2 N^2T\right)
=\exp\left(\frac{\eta^2}{2\epsilon \gamma^2 N^4 T}\right),\]
and consequently
\[\exp\left(\frac{\eta^2}{2\epsilon \gamma^2 N^4 T}\right)
\pp \left(\sqrt{\epsilon}\gamma N
\int_0^{\tau} (E_s^{\epsilon})^{-1}\dd W_s\geqslant \eta, \Gamma_N\right) \leqslant 1.\]
We conclude that the part corresponding to  ``$\geqslant \eta$'' in (\ref{step3}) is valid, but
it is immediately verified that
the part corresponding to ``$\leqslant -\eta$'' in (\ref{step3}) can be addressed in the same way.

We now turn to contribution (B). The validity of (\ref{step2}) can be proved in a similar way by defining the stopping time 
\[\tau':=\inf\left \{t\leqslant T: 
\left|\int_0^t \beta_s^{\epsilon}\dd B_s\right|>\frac{\log N}{\sqrt{\epsilon}}\right\}\] and the process
$\tilde{\beta}_t^{\epsilon}$ and its stochastic exponential $\eee(\tilde{\beta}^{\epsilon})_t$:
\[\tilde{\beta}_t^{\epsilon}:=\frac{\log N}{\sqrt{\epsilon}K^2 T}\int_0^t \beta_s^{\epsilon} \dd B_s,\:\:\:\:\eee(\tilde{\beta}^{\epsilon})_t=\exp\left(\tilde{\beta}_t^{\epsilon}-\frac{1}{2}\langle\tilde{\beta}^{\epsilon}\rangle_t\right),\]
where $K$ is the constant in (\ref{bou}).
\end{proof}

The following result establishes the lower bound of the local LDP. 

\begin{proposition} \label{low}
For every $(\varphi, \nu) \in \mathbb{C}_T \times \mathbb{M}$,
\[\liminf_{\delta\rightarrow 0}\liminf_{\epsilon \rightarrow 0} \epsilon \log \PP
(\rho_T(M^{\epsilon}, \varphi)+d_T(\nu^{\epsilon}, \nu)\leqslant \delta)\geqslant -L_{T}(\varphi, \nu).\] 
\end{proposition}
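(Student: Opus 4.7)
The plan is to reduce the (possibly) degenerate case, where $\hat{\sigma}^2$ may vanish, to Proposition \ref{non} by means of the perturbed diffusion $M^{\epsilon,\gamma}$ introduced in (\ref{nonsingular}), which has a uniformly non-degenerate effective diffusion. If $\varphi\notin\mathbb{H}_T$ then $L_T(\varphi,\nu)=\infty$ and the bound is vacuous, so I restrict to the case $(\varphi,\nu)\in \mathbb{H}_T\times \mathbb{M}_T$ with $L_T(\varphi,\nu)<\infty$.

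First I would rewrite $M^{\epsilon,\gamma}$ as a Markov-modulated SDE of the form (\ref{def}) driven by a single Brownian motion with effective diffusion coefficient $\tilde{\sigma}(i,x):=\sqrt{\sigma^2(i,x)+\gamma^2}$, so that $\inf_{i,x}\tilde{\sigma}^2\geqslant \gamma^2>0$. Proposition \ref{non} then applies to $M^{\epsilon,\gamma}$ and yields, for each $\gamma>0$,
\[
\liminf_{\delta\to 0}\liminf_{\epsilon\to 0}\epsilon \log \PP\bigl(\rho_T(M^{\epsilon,\gamma},\varphi)+d_T(\nu^\epsilon,\nu)\leqslant \delta\bigr)\geqslant -I_T^\gamma(\varphi,\nu)-\tilde{I}_T(\nu),
\]
where
\[
I_T^\gamma(\varphi,\nu):=\frac{1}{2}\int_0^T \frac{(\varphi'_t-\hat{b}(\nu,\varphi_t))^2}{\hat{\sigma}^2(\nu,\varphi_t)+\gamma^2}\,\DD t.
\]
Since for fixed $\gamma$ the inner liminf is non-decreasing in $\delta$, the bound equivalently holds for each individual $\delta>0$. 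Monotone convergence gives $I_T^\gamma(\varphi,\nu)\uparrow I_T(\varphi,\nu)$ as $\gamma\downarrow 0$.

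Next I would transfer the lower bound from $M^{\epsilon,\gamma}$ to $M^\epsilon$ via the triangle inequality for $\rho_T$: on the event $\{\rho_T(M^{\epsilon,\gamma},\varphi)+d_T(\nu^\epsilon,\nu)\leqslant \delta/2\}\cap \{\rho_T(M^{\epsilon,\gamma},M^\epsilon)\leqslant \delta/2\}$ the event $\{\rho_T(M^\epsilon,\varphi)+d_T(\nu^\epsilon,\nu)\leqslant \delta\}$ occurs, hence
\[
\PP\bigl(\rho_T(M^\epsilon,\varphi)+d_T(\nu^\epsilon,\nu)\leqslant \delta\bigr)\geqslant \PP\bigl(\rho_T(M^{\epsilon,\gamma},\varphi)+d_T(\nu^\epsilon,\nu)\leqslant \delta/2\bigr)-\PP\bigl(\rho_T(M^{\epsilon,\gamma},M^\epsilon)>\delta/2\bigr).
\]
By Lemma \ref{gamma}, for each fixed $\delta>0$ one can choose $\gamma(\delta)>0$ sufficiently small so that $\limsup_{\epsilon\to 0}\epsilon\log\PP(\rho_T(M^{\epsilon,\gamma(\delta)},M^\epsilon)>\delta/2)\leqslant -(L_T(\varphi,\nu)+1)$, and, after replacing $\gamma(\delta)$ by $\min(\gamma(\delta),\delta)$ if necessary, $\gamma(\delta)\downarrow 0$ as $\delta\downarrow 0$. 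Since $I_T^{\gamma(\delta)}+\tilde{I}_T\leqslant L_T<L_T+1$, the elementary identity $\epsilon\log(a_\epsilon-b_\epsilon)=\epsilon\log a_\epsilon+\epsilon\log(1-b_\epsilon/a_\epsilon)$, applied along any subsequence realizing the liminf and combined with $b_\epsilon/a_\epsilon\to 0$, yields
\[
\liminf_{\epsilon\to 0}\epsilon\log\PP\bigl(\rho_T(M^\epsilon,\varphi)+d_T(\nu^\epsilon,\nu)\leqslant \delta\bigr)\geqslant -I_T^{\gamma(\delta)}(\varphi,\nu)-\tilde{I}_T(\nu).
\]

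Finally I would take $\delta\to 0$; since $\gamma(\delta)\downarrow 0$, monotone convergence gives $I_T^{\gamma(\delta)}\to I_T$, whence $\liminf_{\delta\to 0}\liminf_{\epsilon\to 0}\epsilon\log\PP(\rho_T(M^\epsilon,\varphi)+d_T(\nu^\epsilon,\nu)\leqslant\delta)\geqslant -I_T(\varphi,\nu)-\tilde{I}_T(\nu)=-L_T(\varphi,\nu)$. The principal delicate point is the interlocking of the three limits in $\epsilon,\delta,\gamma$: one \emph{cannot} hold $\gamma>0$ fixed and then let $\delta\to 0$, since that would produce a bound strictly stronger than $-L_T$ (as $I_T^\gamma<I_T$ whenever $\hat{\sigma}^2$ is genuinely degenerate), contradicting the matching upper bound from Proposition \ref{upp}. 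The diagonal selection $\gamma=\gamma(\delta)\downarrow 0$ synchronizes the perturbation size with the resolution parameter $\delta$ at precisely the rate needed to recover the true functional $L_T$ in the limit.
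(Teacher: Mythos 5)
Your proof is correct and follows essentially the same strategy as the paper: reduce the degenerate case to Proposition \ref{non} by passing to the perturbed diffusion $M^{\epsilon,\gamma}$ of (\ref{nonsingular}), control the replacement error via Lemma \ref{gamma}, and recover $I_T$ from $I_T^\gamma$ by monotone convergence. The only organizational differences are cosmetic: you use a subtraction bound $\PP(A)\geqslant\PP(B)-\PP(C)$ together with a diagonal choice $\gamma=\gamma(\delta)\downarrow 0$ tied to the resolution $\delta$, whereas the paper uses the inequality $\PP(B)\leqslant 2\max(\PP(A),\PP(C))$ and takes the three limits $\epsilon\to 0$, then $\gamma\to 0$, then $\delta\to 0$ in that order with $\gamma$ and $\delta$ kept independent; also, the paper invokes the intermediate estimate (\ref{ustar}) from the proof of Proposition \ref{non} directly rather than the proposition itself plus monotonicity in $\delta$, as you do. One small informality: when replacing $\gamma(\delta)$ by $\min(\gamma(\delta),\delta)$ you should note that Lemma \ref{gamma} guarantees the bound $\limsup_\epsilon\epsilon\log\PP(\rho_T(M^{\epsilon,\gamma},M^\epsilon)>\delta/2)\leqslant -(L_T+1)$ for \emph{all} $\gamma$ below a threshold, not just for one particular value, so shrinking $\gamma(\delta)$ indeed preserves the estimate; this is a one-line fix.
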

\begin{proof}
As mentioned in the beginning of this section, only 
the case $(\varphi, \nu) \in \mathbb{H}_T \times \mathbb{M}_T$ such that 
\[\int_{0}^{T} \frac{[\varphi'_{t}-\hat{b}(\nu, \varphi_{t})]^2}{\hat{\sigma}^2(\nu, \varphi_{t})} \DD t < \infty\] needs to be considered. 
If $\inf_{i, x}\sigma^2(i , x) > 0$, then the result is valid due to Proposition \ref{non}.
If $\inf_{i, x}\sigma^2(i , x) = 0$, then we consider $M_{t}^{\epsilon, \gamma}$ as defined in (\ref{nonsingular}).
The idea is that we decompose the probability $\pp\left(\rho_T(M^{\epsilon, \gamma}, \varphi)+d_T(\nu^{\epsilon}, \nu)\leqslant \frac{\delta}{2}\right)$ into 
the sum of 
\begin{equation}
\label{eerste}\pp\left(\rho_T(M^{\epsilon, \gamma}, \varphi)+d_T(\nu^{\epsilon}, \nu)\leqslant \frac{\delta}{2}, \rho_T(M^{\epsilon}, \varphi)+d_T(\nu^{\epsilon}, \nu)\leqslant \delta\right)\end{equation}
and 
\begin{equation}
\label{tweede}\pp\left(\rho_T(M^{\epsilon, \gamma}, \varphi)+d_T(\nu^{\epsilon}, \nu)\leqslant \frac{\delta}{2}, \rho_T(M^{\epsilon}, \varphi)+d_T(\nu^{\epsilon}, \nu)> \delta\right).\end{equation}
Obviously, (\ref{eerste}) is majorized by $\pp( \rho_T(M^{\epsilon}, \varphi)+d_T(\nu^{\epsilon}, \nu)\leqslant \delta)$.
Using the triangle inequality, we find that $\rho_T(M^{\epsilon}, \varphi) \leqslant \rho_T(M^{\epsilon}, M^{\epsilon, \gamma})+\rho_T(M^{\epsilon, \gamma}, \varphi)$. So that
(\ref{tweede}) is majorized by $\pp\left(\rho_T(M^{\epsilon}, M^{\epsilon, \gamma})> \frac{\delta}{2}\right)$.
Hence, $\pp\left(\rho_T(M^{\epsilon, \gamma}, \varphi)+d_T(\nu^{\epsilon}, \nu)\leqslant \frac{\delta}{2}\right)$ is not greater than
\[2 \max\left[\pp( \rho_T(M^{\epsilon}, \varphi)+d_T(\nu^{\epsilon}, \nu)\leqslant \delta), \pp\left(\rho_T(M^{\epsilon}, M^{\epsilon, \gamma})> \frac{\delta}{2}\right)\right].\]
By Lemma \ref{gamma},
\[\lim_{\gamma \rightarrow 0} \limsup_{\epsilon\rightarrow 0}\epsilon \log \pp
\left(\rho_{T}(M^{\epsilon, \gamma}, M^{\epsilon})> \frac{\delta}{2} \right)=-\infty,\]
and, as a result,
 \begin{eqnarray*}\lefteqn{\hspace{-1cm}\liminf_{\delta\rightarrow 0}\lim_{\gamma \rightarrow 0} \liminf_{\epsilon\rightarrow 0}\epsilon \log \pp\left(\rho_T(M^{\epsilon, \gamma}, \varphi)+d_T(\nu^{\epsilon}, \nu)\leqslant \frac{\delta}{2}\right)}\\
&\leqslant&\liminf_{\delta\rightarrow 0} \liminf_{\epsilon\rightarrow 0}\epsilon \log  \pp( \rho_T(M^{\epsilon}, \varphi)+d_T(\nu^{\epsilon}, \nu)\leqslant \delta).\end{eqnarray*}
Next we compute the term on the left-hand side of the above inequality.
Since $M^{\epsilon, \gamma}$ meets the conditions in Lemma \ref{non}, $(M^{\epsilon, \gamma}, \nu^{\epsilon})$ satisfies the inequality (\ref{ustar}). 
Then for every $\gamma>0$,  we obtain
\[ \liminf_{\epsilon \rightarrow 0}\epsilon \log\pp\left(\rho_T(M^{\epsilon, \gamma}, \varphi)+d_T(\nu^{\epsilon}, \nu)\leqslant \frac{\delta}{2}\right) 
 \geqslant -K_{Q, {u^*}} \frac{\delta}{2} d -\tilde{I}_T(\nu)
 -\frac{1}{2}\int_{0}^{T} \frac{[\varphi'_{s}-\hat{b}(\varphi_{s},\nu)]^2}{\hat{\sigma}^2(\varphi_{s}, \nu)+\gamma^2} \DD s. \]
By the monotone convergence theorem (recall the convention $0/0=0$),
\[ \frac{1}{2}\int_{0}^{T} \frac{[\varphi'_{s}-\hat{b}(\varphi_{s},\nu)]^2}{\hat{\sigma}^2(\varphi_{s}, \nu)+\gamma^2} \DD s
\rightarrow \frac{1}{2}\int_{0}^{T} \frac{[\varphi'_{s}-\hat{b}(\varphi_{s},\nu)]^2}{\hat{\sigma}^2(\varphi_{s}, \nu)} \DD s
=I_T(\varphi,\nu), \text{ as } \gamma \rightarrow 0\]
which implies that
\[\liminf_{\delta\rightarrow 0}\lim_{\gamma \rightarrow 0} \liminf_{\epsilon\rightarrow 0}\epsilon \log \pp\left(\rho_T(M^{\epsilon, \gamma}, \varphi)+d_T(\nu^{\epsilon}, \nu)\leqslant \frac{\delta}{2}\right) \geqslant  -\tilde{I}_T(\nu)-I_T(\varphi,\nu). \]
We have proven the claim.
\end{proof}

\appendix

\section{Appendix}

\begin{lemma} \label{integrationbyparts}
Let $f(t, i)$ be a continuous function on $[0, T]$ for every $i \in \mathbb{S}$. 
Let $\mu, \nu \in \mathbb{M}$ such that 
$d_T (\mu, \nu) \leqslant \delta$. For any $\gamma>0$ and $[t_1, t_2] \subset [0, T]$, there exists a constant
$C>0$ such that
\begin{equation}
\sup_{i \in \mathbb{S}}\left| \int_{t_1}^{t_2}f(s, i)[K_{\mu}(s, i)-K_{\nu}(s, i)] \DD s \right|\leqslant C\delta+\gamma.
\end{equation}
\end{lemma}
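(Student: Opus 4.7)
The key observation is that, writing $H(s, i) := \int_0^s [K_\mu(u, i) - K_\nu(u, i)] \DD u$, the hypothesis $d_T(\mu, \nu) \leqslant \delta$ translates to the uniform bound $\sup_{s \in [0,T],\, i \in \mathbb{S}} |H(s, i)| \leqslant \delta$. My strategy is therefore to carry out a (continuous) integration-by-parts against $H$, which converts our $L^1$-style need (estimating an integral against $K_\mu - K_\nu$) into the $L^\infty$-style control on $H$ that we actually have. The obstruction is that $f$ is only assumed continuous, so I first smooth $f$; the uniform continuity of $f$ is what allows the smoothing error to be swept into the additive $\gamma$ term.

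Concretely, since $\mathbb{S}$ is finite and $f(\cdot, i)$ is continuous on $[0, T]$ for each $i$, $f$ is uniformly continuous on $[0,T]\times \mathbb{S}$. For the given $\gamma > 0$ I would choose (via mollification of $f$ extended continuously beyond $[0,T]$, or by piecewise linear interpolation in $s$) a map $f_\gamma: [0,T]\times \mathbb{S}\to \rr$ with $f_\gamma(\cdot, i) \in C^1([0,T])$ for each $i$, satisfying
\[
\sup_{s \in [0,T],\, i \in \mathbb{S}} |f(s, i) - f_\gamma(s, i)| \leqslant \frac{\gamma}{t_2 - t_1}.
\]
Splitting $f = f_\gamma + (f - f_\gamma)$ and noting that $|K_\mu(s,i) - K_\nu(s,i)| \leqslant 1$ (both kernels take values in $[0,1]$), the contribution of $f - f_\gamma$ to the integral over $[t_1,t_2]$ is at most $\gamma$ in absolute value, uniformly in $i$.

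For the contribution of $f_\gamma$, classical integration by parts (valid because $H(\cdot, i)$ is absolutely continuous with density $K_\mu(\cdot, i) - K_\nu(\cdot, i)$ and $f_\gamma(\cdot, i)$ is $C^1$) gives
\[
\int_{t_1}^{t_2} f_\gamma(s, i)[K_\mu(s, i) - K_\nu(s, i)] \DD s = \bigl[f_\gamma(s, i)\,H(s, i)\bigr]_{t_1}^{t_2} - \int_{t_1}^{t_2} \frac{\partial f_\gamma}{\partial s}(s, i)\,H(s, i) \DD s.
\]
Using $|H(s,i)| \leqslant \delta$, the right-hand side is bounded in absolute value by $C\delta$, where
\[
C := 2 \sup_{s,\, i} |f_\gamma(s, i)| + (t_2-t_1) \sup_{s,\, i} \left|\tfrac{\partial f_\gamma}{\partial s}(s, i)\right|
\]
is a constant that depends on $\gamma$, $f$, $T$, $t_1$, $t_2$ (through the choice of $f_\gamma$) but not on $\delta$, $\mu$, $\nu$, or $i$. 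Adding the two contributions and taking the supremum over $i \in \mathbb{S}$ yields the claimed bound $C\delta + \gamma$.

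The only delicate point is the $\gamma$-dependence of $C$: refining $f_\gamma$ to reduce the approximation error necessarily inflates $\|\partial_s f_\gamma\|_\infty$, so one cannot get a single $C$ uniform in $\gamma$. This is precisely why the statement permits $C$ to depend on $\gamma$. Beyond this unavoidable trade-off the argument is essentially Abel summation in continuous form, and I do not anticipate any technical obstacle.
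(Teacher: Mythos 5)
Your proof is correct and follows essentially the same strategy as the paper's: uniformly approximate $f$ by a smoother function (you use $C^1$, the paper first treats bounded-variation $f$ and then reduces to it), integrate by parts against the cumulative difference $H=\mu-\nu$, and use $d_T(\mu,\nu)\leqslant\delta$ to control $\|H\|_\infty$. The only cosmetic difference is that the paper separates out the bounded-variation case as a first step, whereas you go directly to the $C^1$ approximant; the bookkeeping and the $\gamma$-vs.-$C(\gamma)$ trade-off you flag are exactly as in the paper.
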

\begin{proof}
We first look at functions $f(t, i)$ that are of bounded variation. By integration by parts, we have
\[\int_{t_1}^{t_2}f(s, i)[K_{\mu}(s, i)-K_{\nu}(s, i)] \DD s
=[\mu(s, i)-\nu(s, i)] f(s, i)|_{t_1}^{t_2}- \int_{t_1}^{t_2}[\mu(s, i)-\nu(s, i)]\DD f(s, i).\]
Then
\begin{eqnarray*}
\left| \int_{t_1}^{t_2}f(s, i)[K_{\mu}(s, i)-K_{\nu}(s, i)] \DD s \right|
&\leqslant& |\mu(t_2, i)-\nu(t_2, i)|| f(t_2, i)|+|\mu(t_1, i)-\nu(t_1, i)|| f(t_1, i)|\\
&&\hspace{2cm}+\:\ \int_{t_1}^{t_2}|\mu(s, i)-\nu(s, i)||\DD f(s, i)|\\
&\leqslant& C_1\delta+C_2 \delta+ TV_f[t_1, t_2] \delta,
\end{eqnarray*}
where $TV_f[t_1, t_2]$ denotes the total variation of $f$ on $[t_1, t_2]$ and $C_1, C_2$ are two positive constants. Since $\mathbb{S}$ has finite elements,
we can find a constant $C$ such that the claim holds. If $f(t, i)$ is only continuous, it can be uniformly approximated by
a continuously differentiable function (see \cite{pur}), that is, for any $\gamma>0$, there exists a continuously differentiable 
function $f^{\gamma}(t, i)$ such that \[\sup_{t\in [t_1, t_2], i \in \mathbb{S}}|f(t,i)-f^{\gamma}(t, i)|<\gamma/2\,(t_2-t_1).\]
Then
\begin{eqnarray*}
\sup_{i \in \mathbb{S}}\left| \int_{t_1}^{t_2}f(s, i)[K_{\mu}(s, i)-K_{\nu}(s, i)] \DD s \right|
&\leqslant& \sup_{i \in \mathbb{S}}\left| \int_{t_1}^{t_2}\left[f(s, i)-f^{\gamma}(s, i)\right][K_{\mu}(s, i)-K_{\nu}(s, i)] \DD s \right|\\
&&\hspace{1cm}+\:\ \sup_{i \in \mathbb{S}}\left| \int_{t_1}^{t_2}f^{\gamma}(s, i)[K_{\mu}(s, i)-K_{\nu}(s, i)] \DD s \right|\\
&\leqslant& \gamma+ C\delta.
\end{eqnarray*}
This finishes our proof.
\end{proof}

For any $u\in \mathbb{U}$, let $Q(u)(t)$ be the transition matrix resulting from the measure change induced by the stochastic exponential $\eee(\tilde{N}^{\epsilon})$. It is known, see Proposition 11.2.3 in Bielecki and Rutkowski \cite{bielecki}, that 
\[Q(u)(t)_{ij}=Q_{ij}\frac{u(t, j)}{u(t, i)}\:\:\mbox{if $i\not = j$;}\:\:\:
Q(u)(t)_{ii}=-\sum_{j\neq i}Q_{ij}\frac{u(t, j)}{u(t, i)}.\]  For a fixed $t$, we suppress this $t$, so as to make the notation more compact.
In matrix notation, (where we throughout write $\diag(u)$ to denote the diagonal matrix with entries $u_i\delta_{ij}$) we have 
\begin{equation}\label{eq:qqu}
Q(u)=\diag(u)^{-1}Q\diag(u)-\diag(u)^{-1}\diag(Qu).
\end{equation}

\begin{lemma} \label{invariant}
Let $\nu$ be a d-dimensional vector such that $\sum_{i=1}^{d}\nu(i)=1$ and $\nu(i) > 0$. Let $u^*\in U$ be an optimizer of 
\[
\inf_{u \in U}\sum_i\frac{(Q u)_i}{u_i}\nu_i=\inf_{u\in U}\nu^{\rm T}\diag(u)^{-1}Qu.
\]
Then $\nu$ is the unique invariant vector of the transition matrix $Q(u^*)$. 
\end{lemma}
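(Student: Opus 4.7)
My plan is to show that the stationarity condition $\nu^{\rm T}Q(u^*)=0$ is precisely the first-order optimality condition for the variational problem, and then invoke irreducibility to obtain uniqueness.

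First, observe that the function $F(u):=\nu^{\rm T}\diag(u)^{-1}Qu=\sum_i \nu_i (Qu)_i/u_i$ is smooth on the open set $U=(0,\infty)^d$. By Lemma~\ref{optimizer}, an optimizer $u^*\in U$ exists (and lies in the interior of $U$, as the argument in Lemma~\ref{optimizer} shows that no coordinate is forced to $0$ or $\infty$ when $\nu$ has strictly positive components). Therefore $\nabla F(u^*)=0$. Computing the $k$-th partial derivative and setting it equal to zero yields
\[
\sum_{i} \frac{\nu_i Q_{ik}}{u^*_i} \;=\; \frac{\nu_k\,(Qu^*)_k}{(u^*_k)^2}, \qquad k=1,\dots,d.
\]

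Second, I would multiply through by $u^*_k$ and identify both sides in terms of $Q(u^*)$. Using the explicit definition $Q(u^*)_{ij}=Q_{ij}\,u^*_j/u^*_i$ for $i\ne j$ together with $Q(u^*)_{kk}=-\sum_{j\ne k}Q(u^*)_{kj}$, a direct bookkeeping of the diagonal versus off-diagonal contributions on the two sides rearranges the equality into
\[
\sum_{i\ne k}\nu_i\,Q(u^*)_{ik} \;=\; -\nu_k\,Q(u^*)_{kk}, \qquad \text{i.e.,}\qquad \sum_{i}\nu_i\,Q(u^*)_{ik}=0.
\]
Since this holds for every $k$, we conclude that $\nu^{\rm T}Q(u^*)=0$, i.e.\ $\nu$ is an invariant vector for $Q(u^*)$. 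Combined with the standing normalisation $\sum_i\nu_i=1$ and positivity, $\nu$ is an invariant probability distribution of the Markov chain generated by $Q(u^*)$.

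Finally, for uniqueness: by assumption (A.4) all off-diagonal entries of $Q$ are strictly positive, and since $u^*$ is component-wise strictly positive the entries $Q(u^*)_{ij}=Q_{ij}u^*_j/u^*_i$ for $i\ne j$ are strictly positive as well. Hence the Markov chain with generator $Q(u^*)$ is irreducible on the finite state space $\mathbb{S}$, so its invariant probability distribution is unique, and that unique distribution must be $\nu$. The main technical point in the proof is simply the careful separation of diagonal and off-diagonal terms in the identification of the critical-point equation with $\nu^{\rm T}Q(u^*)=0$; the rest is standard.
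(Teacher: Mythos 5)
Your proof is correct and takes essentially the same route as the paper: both show that the first-order optimality condition $\nabla F(u^*)=0$ for $F(u)=\nu^{\rm T}\diag(u)^{-1}Qu$ is exactly the invariance relation $\nu^{\rm T}Q(u^*)=0$. The paper packages the calculation in matrix form, using the identity $\diag(u)Q(u)\diag(u)^{-1}=Q-\diag(u)^{-1}\diag(Qu)$ to pass from the critical-point equation $\nu^{\rm T}\diag(u)^{-1}\bigl(Q-\diag(u)^{-1}\diag(Qu)\bigr)=0$ to $\nu^{\rm T}Q(u^*)\diag(u^*)^{-1}=0$, whereas you carry out the equivalent coordinate-wise bookkeeping; the two are the same argument presented differently. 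One small improvement in your version: you spell out the uniqueness step explicitly via (A.4) and $u^*\in U$, which gives strictly positive off-diagonal entries of $Q(u^*)$ and hence irreducibility, whereas the paper compresses this into the single remark that $\nu(i)>0$ implies uniqueness.
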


\begin{proof}
To find a minimizing $u^*=u(\nu)$ (where it observed that minimizers are not necessarily unique) for the above problem, we first note that all $u^*_i>0$. Hence the minimizer solves the system of first order conditions. Differentiation with respect to $u_k$ yields ($e_k$ denoting the $k$-th basis vector)
\[
-\nu^{\rm T} \diag(u)^{-1}e_ke_k^{\rm T} \diag(u)^{-1}Qu+\nu^{\rm T} \diag(u)^{-1}Qe_k=0.
\]
In vector notation these equations can be conveniently summarized as
\begin{equation}\label{eq:nudq}
\nu^{\rm T} \diag(u)^{-1}\left(-\diag(u)^{-1}\diag(Qu)+Q\right)=0.
\end{equation}
From \eqref{eq:qqu} we deduce by commutation of diagonal matrices the relation
\[
Q(u)=\diag(u)^{-1}(Q-\diag(u)^{-1}\diag(Qu))\diag(u),
\]
and hence
\[
\diag(u)Q(u)\diag(u)^{-1}=Q-\diag(u)^{-1}\diag(Qu).
\]
We can therefore rewrite \eqref{eq:nudq} as
\[
\nu^{\rm T} Q(u)\diag(u)^{-1}=0.
\]
It follows that $\nu^{\rm T}  Q(u^*)=0$. Since $\nu(i)>0$, $\nu$ is the unique invariant vector of $Q(u^*)$. 
\end{proof}

{\small
}

\end{document}